\numberwithin{equation}{section}
\theoremstyle{plain}
\newtheorem{thm}{Theorem}[section]
\newtheorem{prop}[thm]{Proposition}
\newtheorem{cor}[thm]{Corollary}
\newtheorem{lem}[thm]{Lemma}
\theoremstyle{remark}
\newtheorem{rem}[thm]{Remark}
\theoremstyle{definition}
\newtheorem{mydef}[thm]{Definition}
\newtheorem{example}[thm]{Example}
\newcommand{\C}{\mathbb{C}}
\newcommand{\A}{\mathbb{A}}
\newcommand{\Ps}{\mathbb{P}}
\newcommand{\Z}{\mathbb{Z}}
\newcommand{\R}{\mathbb{R}}
\newcommand{\N}{\mathbb{N}}
\newcommand{\Oc}{\mathcal{O}}
\DeclareMathOperator{\HS}{HS}
\DeclareMathOperator{\HSG}{HSG}
\DeclareMathOperator{\spec}{Spec}
\DeclareMathOperator{\sheafspec}{\mathbf{Spec}}
\DeclareMathOperator{\Hom}{Hom}
\DeclareMathOperator{\J}{J}
\DeclareMathOperator{\JG}{JG}
\DeclareMathOperator{\mult}{mult}
\DeclareMathOperator{\character}{char}
\DeclareMathOperator{\Der}{Der}
\DeclareMathOperator{\ord}{ord}
\DeclareMathOperator{\Sym}{Sym}
\title{Logarithmic Jet Spaces and Intersection Multiplicities}
\date{May 14th, 2009}
\author{Seth Clayton Dutter}
\begin{document}
\maketitle

\begin{abstract}
The theory of relative logarithmic jet spaces is developed for log schemes. With this theory the existence of bounds of intersection multiplicities of curves and divisors on certain log schemes is established. This result extends those of Noguchi and Winkelmann in \cite{nogu1} by replacing the semi-abelian condition by a differential one.
\end{abstract}

\vspace{0.5 in}

The goal of this paper is twofold. First and foremost the theory of relative logarithmic jet spaces will be established for log schemes. This approach will be purely algebraic and not rely on the complex topology and derivative as past papers have. Rather the exposition will follow that of Vojta \cite{vojt1}. With this theory developed, the author translates a result of Noguchi and Winkelmann  on semi-abelian varieties over function fields to slightly more general schemes. The full strength of the theory of Log Jet Spaces will not be employed, but this will serve as a tool demonstrating the usefulness of compactification, log geometry and jet spaces.

\section{Mason's Theorem and Generalizations}

In order to introduce Mason's Theorem and its corollaries some preliminary definitions are needed.

\begin{mydef}
Let $f\in \C[z]$ with $f\neq 0$. Define the \textit{conductor} of $f$, denoted $N(f)$, to be the number of distinct zeroes over the complex numbers. Additionally define the \textit{multiplicity} at $p\in \C$, denoted $\mult_p f$, to be the largest integer $n$ such that $(z-p)^n|f$.
\end{mydef}

\begin{example}
Let $f = z^5(z+1)^{11}$, then $N(f) = 2$ since $f$ only has $2$ distinct roots. On the other hand $\mult_0 f = 5$ and $\mult_{-1} f = 11$.
\end{example}

\begin{thm}[Mason] Let $f, g, h\in \C[z]$ be relatively prime polynomials over the complex numbers, not all constant, satisfying $f+g+h=0$. Then
\[
\max \deg\{f,g,h\} \leq N(fgh) - 1.
\]
\end{thm}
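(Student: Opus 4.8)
The plan is to exploit the linear relation together with its formal derivative by means of the Wronskian. Since $f+g+h=0$, differentiating gives $f'+g'+h'=0$, so the three pairs $(f,g)$, $(g,h)$, $(h,f)$ all satisfy relations of the same shape. I would introduce $W=fg'-f'g$ and first observe that, after substituting $g=-f-h$, a short computation gives $W=gh'-g'h=hf'-h'f$ as well. Thus a single polynomial $W$ simultaneously plays the role of the Wronskian of every pair, and it is precisely this symmetry that will let me bound all three degrees by the same argument.

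Next I would record the two elementary facts about $W$. On the one hand, each of $fg'$ and $f'g$ has degree at most $\deg f+\deg g-1$, so $\deg W\le \deg f+\deg g-1$, and by the identifications above also $\deg W\le \deg g+\deg h-1$ and $\deg W\le \deg f+\deg h-1$. On the other hand, $W$ is not identically zero: if it were, then $f$ and $g$ would be proportional, which through the relation $f+g+h=0$ would force a common factor of $f,g,h$ and contradict either relative primality or the hypothesis that the three are not all constant.

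The heart of the argument is a divisibility statement. Since we are over $\C$ (characteristic zero), for any nonzero $p$ the factor $\gcd(p,p')$ collects exactly the repeated parts of $p$, so $\deg\gcd(p,p')=\deg p-N(p)$. Writing $f_0=\gcd(f,f')$, $g_0=\gcd(g,g')$, $h_0=\gcd(h,h')$, each divides $W$: for instance $f_0\mid f$ and $f_0\mid f'$, hence $f_0\mid fg'-f'g=W$, and the other two follow from the forms $W=gh'-g'h$ and $W=hf'-h'f$. Because $f,g,h$ are pairwise coprime (a consequence of relative primality together with $f+g+h=0$), the divisors $f_0,g_0,h_0$ are pairwise coprime, and therefore their product divides $W$. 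I expect this step, namely recognizing that one and the same Wronskian is divisible by the repeated-root parts of all three polynomials at once, to be the point requiring the most care.

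Finally I would assemble the pieces. The divisibility yields
\[
(\deg f-N(f))+(\deg g-N(g))+(\deg h-N(h))\le \deg W .
\]
Combining with $\deg W\le \deg f+\deg g-1$ and cancelling leaves $\deg h\le N(f)+N(g)+N(h)-1$; since pairwise coprimality gives $N(fgh)=N(f)+N(g)+N(h)$, this reads $\deg h\le N(fgh)-1$. Feeding in the other two degree estimates for $W$ in exactly the same way bounds $\deg f$ and $\deg g$ identically, and taking the maximum over the three completes the proof.
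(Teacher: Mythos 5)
Your proposal is correct. The paper does not actually prove Mason's Theorem --- it simply cites Lang (Theorem IV.7.1 of \cite{lang93}) --- so there is no in-paper argument to compare against; what you have written is the standard self-contained Stothers--Mason proof via the Wronskian, and the reference you were pointed to runs an essentially equivalent argument phrased with logarithmic derivatives of $f/h$ and $g/h$. All the key steps check out: the identity $W = fg'-f'g = gh'-g'h = hf'-h'f$ follows from differentiating $f+g+h=0$; pairwise coprimality of $f,g,h$ follows from relative primality plus the linear relation; $\gcd(p,p')$ has degree $\deg p - N(p)$ in characteristic zero; and the three gcd's are pairwise coprime divisors of $W$, so their product divides $W$. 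Two small points deserve an explicit sentence in a final write-up. First, before arguing that $W=0$ forces $f$ and $g$ to be proportional, you should rule out that one of $f,g,h$ is identically zero (if, say, $f=0$, then $h=-g$ and relative primality forces $g$ constant, contradicting the hypothesis); once all three are nonzero, $fg'=f'g$ does give $f=cg$ and the contradiction you describe. Second, the cancellation step implicitly uses $\deg W \ge \deg f_0 + \deg g_0 + \deg h_0$, which is only valid because $W\neq 0$, so the nonvanishing of $W$ is load-bearing and worth flagging as such. With those remarks made explicit, the argument is complete.
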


\begin{proof}
See \cite{lang93} Theorem IV.7.1.
\end{proof}

The goal of this paper will not be to generalize Mason's Theorem directly, but rather to generalize the following corollary of Mason's Theorem.

\begin{cor}\label{cor:Mason}
Let $f, g\in \C[z]$ be relatively prime polynomials over the complex numbers, not both constant. Then $\mult_p(f+g) \leq N(fg)$ for all $p\in \C$.
\end{cor}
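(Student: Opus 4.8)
The plan is to apply Mason's Theorem to the triple $f$, $g$, and $h := -(f+g)$, which by construction satisfies $f+g+h=0$ and for which $\mult_p(f+g)=\mult_p h$ (multiplying by the unit $-1$ does not change divisibility by powers of $z-p$). The first task is to verify the hypotheses of Mason's Theorem for this triple. I would first argue that $h\neq 0$: if $f+g=0$ then $g=-f$, and coprimality of $f$ and $g$ would force $f$, hence $g$, to be constant, contradicting the assumption that they are not both constant. Next I would check that $f,g,h$ are pairwise relatively prime: any common divisor of $f$ and $h$ divides both $f$ and $f+g$, hence divides $g$, so it divides $\gcd(f,g)=1$, and the symmetric argument handles $\gcd(g,h)$. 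Since $f$ or $g$ is nonconstant, the triple is not all constant, so Mason's Theorem applies and gives $\deg h \le \max\deg\{f,g,h\} \le N(fgh)-1$.

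The second step is a bookkeeping observation about conductors. Because $f,g,h$ are pairwise relatively prime, their zero sets are disjoint, so $N(fgh)=N(f)+N(g)+N(h)$ and $N(fg)=N(f)+N(g)$. Substituting into the Mason bound yields $\deg h \le N(fg)+N(h)-1$. Bounding $\mult_p h$ naively by $\deg h$ would then leave an unwanted surplus of $N(h)$, and eliminating this term is the crux of the argument.

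The key idea, and the step I expect to be the main obstacle, is to bound the single multiplicity $\mult_p h$ not by $\deg h$ but by the ramification defect $\deg h - N(h)$. Writing $\deg h = \sum_q \mult_q h$ as a sum over the $N(h)$ distinct roots $q$ of $h$, we obtain $\deg h - N(h) = \sum_q(\mult_q h - 1)$, a sum of nonnegative terms. If $p$ is a root of $h$, the single term $\mult_p h - 1$ is then at most the whole sum, so $\mult_p h - 1 \le \deg h - N(h)$; if $p$ is not a root, then $\mult_p(f+g)=0\le N(fg)$ and there is nothing to prove.

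Combining the two chains of inequalities finishes the proof: $\mult_p h - 1 \le \deg h - N(h) \le \bigl(N(fg)+N(h)-1\bigr) - N(h) = N(fg)-1$, whence $\mult_p(f+g)\le N(fg)$ for every $p\in\C$.
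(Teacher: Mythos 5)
Your proof is correct and is essentially the same argument as the paper's: both apply Mason's Theorem to the relation $f+g+h=0$ and then exploit the gap between $\deg h$ and $N(h)$ at the high-multiplicity point. The only cosmetic difference is that the paper factors out $(z-p)^{\mult_p(f+g)}$ before invoking Mason, whereas you keep $h=-(f+g)$ intact and use the equivalent estimate $\mult_p h - 1 \leq \deg h - N(h)$.
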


\begin{proof}
Let $p\in \C$ be arbitrary. Then we can write $f+g = (z-p)^nh$, where $h\in \C[z]$ and $n=\mult_p(f+g)$. By Mason's Theorem,
\[
\begin{split}
\deg{(f+g)} &\leq N(fgh(z-p))-1\\
&\leq N(fg)+N(h).
\end{split}
\]
However, $\deg(f+g) = \deg(h) + n$ and $\deg(h) \leq N(h)$. Combining everything gives $\mult_p(f+g)\leq N(fg)$ for every $p\in \C$.
\end{proof}

There are a variety of ways such a statement can be generalized. For one thing function fields other than $\C[z]$ could be considered. Another option would be increasing the number of polynomials considered. However, the approach the author will be taking in this paper is to give this theorem a geometric interpretation and then generalize. A complete proof is given, although it follows almost immediately from the previous corollary. All of the ideas used come from various proofs of Mason's Theorem.

\begin{prop}\label{Mason} Let $\alpha$ be a regular function on $\A_\C^n = \spec \C[x_1,\ldots,x_n]$ and define $X = D(\alpha)$, the open subset whose complement is the support of the divisor associated to $\alpha$. Let $f$ and $g$ be regular functions on $\A_\C^n$ that restrict to units on $X$. Define the divisor $D = (f+g)$. Let $C$ be $\A^1_\C = \spec \C[z]$ minus $N$ points. Then for every morphism $j:C\rightarrow X$ either $j(C) \subset D$ or $\ord_p j^*(D) \leq N$ for all $p\in C$.
\end{prop}

\begin{proof}
Let $j^\#: \Gamma(X,\Oc_X)\rightarrow \Gamma(C,\Oc_C)$ be the morphism of rings corresponding to $j$. Since $f,g$ are units on $\Oc_X$, $j^\#(f)$ and $j^\#(g)$ must be units on $\Oc_C$. If $j^\#(f+g) = 0$ then $j(C) \subset D$ and we are done. Suppose $j^\#(f+g) \neq 0$, and denote by $f_C, g_C, h_C$ the elements $j^\#(f), j^\#(g), j^\#(f+g)$ respectively giving $f_C+g_C=h_C$. Rewrite the equation as
\[
\frac{f_C}{g_C}+1 = \frac{h_C}{g_C}.
\]
Applying the complex derivative to both sides gives
\begin{equation}\label{prop:masoneq1}
\frac{f_C}{g_C}\left(\frac{f_C'}{f_C}-\frac{g_C'}{g_C}\right) = \frac{h_C}{g_C}\left(\frac{h_C'}{h_C}-\frac{g_C'}{g_C}\right).
\end{equation}
Note that $f_C'/f_C$ is just the log derivative of $f_C$, and as such it can be written as a sum
\[
\frac{f'_C}{f_C} = \sum_{i=1}^N \frac{a_i}{z-\beta_i}.
\]
Likewise $g_C'/g_C$ can be written as a sum
\[
\frac{g'_C}{g_C} = \sum_{i=1}^N \frac{b_i}{z-\beta_i},
\]
where $z-\beta_i\in \Oc_C^*$. Finding a common denominator we can rewrite $f'_C/f_C-g'_C/g_C = r/s$ where $r,s\in \C[z]$ and have no common factors. By construction $\deg(r) \leq N-1$, $\deg(s) \leq N$ and $s\in \Gamma(C,\Oc_C)^*$. Similarly we can write $h'_C/h_C-g'_C/g_C = u/v$. By substituting into Equation \ref{prop:masoneq1} we get
\[
\frac{rf_C}{sg_C} = \frac{uh_C}{vg_C}.
\]
For any point $p\in C$ the polynomials $g_C, f_C, s$ do not vanish since they are units. Therefore
\[
\ord_p \frac{rf_C}{sg_C} = \ord_p r \leq N-1.
\]
On the right hand side we have
\[
\ord_p \frac{uh_C}{vg_C} \geq \ord_p h_C + \ord_p u-1 \geq \ord_p h_C-1.
\]
Combining everything we get $\ord_p h_C \leq N$.
\end{proof}

The simplest application of this proposition is in dimension $1$. In this case it is merely the geometric interpretation of Corollary \ref{cor:Mason}.

\begin{example}
Let $f, g$ be distinct regular functions on $\A^1_\C$, $\alpha = fg$. Observe that $(\A^1_C)_\alpha$ is just the affine line minus $N(fg)$ points. Consider the identity map $(\A^1_\C)_\alpha\rightarrow (\A^1_\C)_\alpha$. By Proposition \ref{Mason} $\mult_p(f(z)-g(z))\leq N(fg)$ for every $p$ such that $\alpha(p)\neq 0$. An alternative interpretation is that the Taylor series centered at any complex number of any two relatively prime distinct polynomials, $f$ and $g$, agree to at most $N(fg)$ terms. This result is in general sharp. Indeed let $f = z^n+1$ and $g = 1$, $N(fg) = n$ and $f-g = z^n$ which has multiplicity $N(fg)$ at $z=0$.
\end{example}

In dimension $1$ there are not many interesting morphisms to choose from. However, in dimension $2$ there is some more freedom.

\begin{example}
Let $\A^2_\C = \spec{\C[x,y]}$, $\alpha = xy$, $D = (y-x)$. Again in this case we can see that the results are sharp. Let $C = (\A^1_\C)_{z(z^N-(z-1)^N)}$ for some non-negative integer $N$ be the affine line minus $N$ points. Consider the morphism $j:C\rightarrow (\A^2_C)_\alpha$ defined by $x\mapsto z^N-(z-1)^N$ and $y\mapsto z^N$, so $y-x\mapsto (t-1)^N$, which has multiplicity $N$ at the point $t=1$.
\end{example}

Part of the weakness of Proposition \ref{Mason} is that it only applies to Cartier divisors which can be represented as a sum of two non-vanishing regular functions. Consider for instance the elliptic curve $y^2=x^3+1$ in $\A^2_\C$. There are a number of different $\alpha$'s that could be selected which would allow the equation $y^2-x^3-1$ to be written as sum of two units. However, $\alpha = xy$, what might be the most natural choice, requires us to treat this as a sum of three units. One particular reason that $xy$ is a natural choice is that $(\A^2_\C)_{xy}\cong \mathbb{G}_m^2$. Now let $C$ be a non-empty open subset of $\A^1_\C$ and $D$ the Cartier divisor associated to the regular function $y^2-x^3-1$ on $(\A^2_\C)_{xy}$. We can ask whether there exists a natural number $N$ such that for every non-constant morphism $j: C\rightarrow (\A^2_\C)_{xy}$, $\ord_p j^*(D)\leq N$ for all $p\in C$. In this particular instance we can view the order at the point $p$ as the degree to which $j(C)$ approximates the curve $y^2-x^3-1=0$ at $j(p)$. If such an $N$ exists then there is a bound on the extent to which the elliptic curve can be approximated by $C$. Unfortunately, Proposition \ref{Mason} is not strong enough to answer whether such a bound exists. Noguchi and Winkelmann answer affirmatively in \cite{nogu1} with the following theorem.\footnote{We have slightly paraphrased the main theorem of \cite{nogu1}.}

\begin{thm}\label{Noguchi} Let $A$ be a semi-abelian variety, let $A\hookrightarrow\bar{A}$ be a smooth equivariant algebraic compactification, let $\bar{D}$ be an effective reduced ample divisor on $\bar{A}$, let $D = \bar{D}\cap A$, and let $C$ be a smooth algebraic curve with smooth compactification $C\hookrightarrow\bar{C}$. Then there exist a number $N\in \N$ such that for every morphism $f:C\rightarrow A$ either $f(C)\subset D$ or $\mult_x f^*D\leq N$ for all $x\in C$. Furthermore, the number $N$ depends only on the numerical data involved as follows:
\begin{enumerate}
\item[i.] The genus of $\bar{C}$ and the number $\#(\bar{C}\setminus C)$ of the boundary points of $C$,
\item[ii.] the dimension of $A$,
\item[iii.] the toric variety (or, equivalently, the associated ``fan") which occurs as the closure in $\bar{A}$ of the maximal connected linear algebraic subgroup $\mathbb{G}_m^t$ of $A$,
\item[iv.] all intersection numbers of the form $D^h\cdot B_{i_1}\cdots B_{i_k}$, where the $B_{i_j}$ are closures of $A$-orbits in $\bar{A}$ of dimension $n_j$ and $h+\sum_j n_j = \dim A$.
\end{enumerate}
\end{thm}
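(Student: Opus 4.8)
The plan is to reproduce the logarithmic-derivative mechanism of Proposition \ref{Mason}, lifted to the setting where the ambient space is a semi-abelian variety. The one structural feature of $A$ I would exploit is that its logarithmic cotangent bundle is trivial: because $A$ acts on the equivariant compactification $\bar A$ with boundary $\partial A = \bar A\setminus A$ a normal crossings divisor, the invariant $1$-forms on $A$ extend to global logarithmic forms $\omega_1,\dots,\omega_n$ (with $n=\dim A$) that frame $\Omega^1_{\bar A}(\log\partial A)$ at every point. On the torus factor $\mathbb{G}_m^t$ these are the forms $dx_j/x_j$ already appearing in Mason's argument, and on the abelian quotient they are the invariant holomorphic forms. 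This triviality is exactly the ``differential condition'' the paper later isolates, and it is what replaces the elementary observation (used in Proposition \ref{Mason}) that the logarithmic derivative of a unit is a rational function with poles only at the removed points.

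With this frame in hand I would reduce the multiplicity bound to a single Wronskian estimate. Fix $x\in C$ and a local equation $\phi$ for $\bar D$ near $f(x)$. Writing $d\phi=\sum_i\phi_i\,\omega_i$ records the derivative of $\phi$ in invariant coordinates, and iterating produces higher logarithmic jets $\phi_{i_1\cdots i_k}$, which are sections of $\Sym^k\Omega^1_{\bar A}(\log\partial A)$ twisted by $\Oc(\bar D)$. From enough of these I would assemble a logarithmic Wronskian $W$ — a determinant in the $\phi_{i_1\cdots i_k}$ — that is a global section of an explicitly computable line bundle $L$ on $\bar A$ built from $\Oc(\bar D)$, the canonical bundle, and $\partial A$. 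The standard Wronskian fact then gives that $f^*W$ vanishes at $x$ to order at least $\mult_x f^*D$ minus a fixed constant depending only on the size of the jet. This is the analogue of passing from $f_C+g_C=h_C$ to Equation \ref{prop:masoneq1}, where differentiation trades vanishing order for a controlled expression.

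The counting step pulls everything back along the extension $\bar f:\bar C\to\bar A$ (which exists by properness of $\bar A$). Since $f$ maps $C$ into $A$, each $f^*\omega_i$ is a logarithmic $1$-form on $\bar C$ with poles only among the points of $\bar C\setminus C$, so the boundary contribution to the degree of $f^*L$ is governed by the logarithmic Euler characteristic $2g-2+\#(\bar C\setminus C)$, accounting for item (i). Because $f^*W$ is a \emph{nonzero} global section of $f^*L$, its order of vanishing at the single point $x$ cannot exceed $\deg f^*L$; and the interior part of this degree is expressible through the intersection numbers $D^h\cdot B_{i_1}\cdots B_{i_k}$ of $\bar D$ against the orbit closures, together with $\dim A$ and the fan — precisely items (ii)--(iv). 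Combining $\mult_x f^*D - c \le \ord_x f^*W \le \deg f^*L$ produces the pointwise bound $\mult_x f^*D\le N$ with $N$ depending only on the listed numerical data.

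The principal obstacle, and the step where the semi-abelian hypothesis does the real work, is the non-degeneracy needed above: I must show $f^*W\not\equiv 0$ whenever $f(C)\not\subset D$. In Proposition \ref{Mason} the corresponding numerator was visibly nonzero, but here vanishing of $W$ would mean the jet of $f$ is trapped in a proper invariant subvariety, and ruling this out is the hard part. I would handle it by using translation-invariance of the frame $\{\omega_i\}$ to transport the rank computation to a generic point of the image, so that non-degeneracy of $W$ reduces to the statement that $f(C)$ is not contained in $D$. The accompanying technical point is bounding the pole orders of the $\phi_{i_1\cdots i_k}$ along $\partial A$ — equivalently, checking that logarithmic differentiation does not increase contact with the boundary faster than the frame permits — and it is precisely here that the equivariance of the compactification and the fan data of item (iii) enter to keep $\deg f^*L$ finite and computable.
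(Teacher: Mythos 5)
The paper does not actually prove this theorem: it is quoted from \cite{nogu1}, and the paper's own contribution (Theorem \ref{thm:main}) reproves and generalizes it by an entirely different mechanism --- a Noetherian stabilization argument on a parameter space of jets ($H_0\supseteq H_1\supseteq\cdots$ must stabilize), which is also the strategy of Noguchi and Winkelmann themselves. Your Wronskian proposal is a genuinely different route, so the relevant question is whether it works, and there is a structural gap that I do not think can be repaired in the form you describe. Your final inequality is $\mult_x f^*D - c \le \ord_x f^*W \le \deg f^*L$, where $L$ is built from $\Oc(\bar D)$, the canonical bundle, and $\partial A$. But $\deg \bar f^*\Oc(\bar D) = \bar f_*[\bar C]\cdot \bar D$ depends on the curve class of $\bar f$, which is unbounded as $f$ ranges over all morphisms $C\to A$; the intersection numbers $D^h\cdot B_{i_1}\cdots B_{i_k}$ in item (iv) are intrinsic to $(\bar A,\bar D)$ and do not control it. So even granting everything else, the bound you obtain depends on the degree of $f$, not only on the listed numerical data, and the theorem's uniformity is lost. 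This is exactly the difficulty the one-dimensional argument of Proposition \ref{Mason} evades: there the expression $r/s = f_C'/f_C - g_C'/g_C$ is built \emph{only from the units} $f_C,g_C$, so $\deg r$ is bounded by the number of removed points independently of $\deg f_C$; for a general ample $\bar D$ there is no such decomposition into units, and replacing $\phi$ by $d\phi/\phi$ only trades the problem for a polar contribution at $f^{-1}(D)$, whose cardinality is likewise not among the allowed data.

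The second gap is the one you flag yourself: non-vanishing of $f^*W$ when $f(C)\not\subset D$. "Transporting the rank computation to a generic point by translation-invariance" does not reduce this to $f(C)\not\subset D$; degeneracy of the jet means $f(C)$ lies in a proper subvariety cut out by jet conditions on $\phi$, and ruling out the nontrivial such loci is where ampleness of $\bar D$ and the classification of stabilizers of $D$ under translation must enter. This is the actual content of the theorem, and in both \cite{nogu1} and Theorem \ref{thm:main} it is absorbed into the stabilization argument (the stable locus $H_N$ consists of jets along which $D$ vanishes to all orders, which forces containment in $D$) rather than proved as a Wronskian non-degeneracy statement. If you want to pursue your route, the part to fix first is the choice of $L$: you need $W$ to land in a sheaf whose pullback to $\bar C$ is controlled by $\Omega^1_{\bar C}\bigl(\log(\bar C\setminus C)\bigr)$ alone, and it is not clear such a Wronskian exists for a general ample divisor.
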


Returning to the previous question, let $C$ be a nonempty open subset of $\A^1_\C$. Then the degree to which $C$ can approximate the elliptic curve $y^2=x^3+1$ in $\mathbb{G}_m^2$ is bounded. After fixing $\Ps^2_\C$ as the compactification of $\mathbb{G}_m^2$, Theorem \ref{Noguchi} implies that the bound for approximating this particular elliptic curve is solely determined by $\#(\Ps^1_\C\setminus C)$.

While Theorem \ref{Noguchi} is remarkably general, there are ways in which it can be further generalized. For one, both Theorem \ref{Noguchi} and Proposition \ref{Mason} only give intersection multiplicities for curves. It turns out that this restriction is not necessary. All we need is a smooth scheme with some other very mild conditions.

The second restriction which is specific to Theorem \ref{Noguchi} is the requirement that $A$ be a semi-abelian variety. In their proof Noguchi and Winkelmann rely on the fact that $A$ is a semi-abelian variety and therefore a Lie group. It turns out that this also is not necessary for the theorem to hold. Not only does $A$ not have to be a semi-abelian variety, it does not even have to be smooth. Certainly some conditions must be in place as the theorem is false for arbitrary schemes. Consider the following example for instance.

\begin{example}
Let $A=C=\spec{\C[z]}$, $D=(z)$ and define $f_n:A\rightarrow C$ by $z\mapsto z^n$ for any $n\in \N$. Then $\mult_0 f_n^*(D) = n$, which is not bounded.
\end{example}

However, the appropriate conditions cannot be stated at this point as they are best phrased in terms of log differential forms. See Theorem \ref{thm:main} for details.

\section{Log Algebras and Schemes}

In order to develop log differential forms and jet spaces some background on log algebras and schemes is necessary. For a much more complete treatment of the subject matter see \cite{ogus06}. In some sense the main use for log algebras and log schemes in the context of this paper will be to keep track of units. Most of the examples in the previous section are statements about units, or sums of units, after removing certain divisors. Log geometry will allow us to keep track of would-be units without having to remove any points from the scheme. Before proceeding it is necessary to establish some conventions and definitions.

\begin{mydef}
A \textit{monoid} is a triple $(M, \star, e_M)$ consisting of a set $M$ along with an associative binary operation $\star: M\times M\rightarrow M$ and a two sided identity element $e_M\in M$. Throughout this paper all monoids will be assumed to be commutative.  When it is clear the set will be used to represent the monoid. Additionally $+$ or $\cdot$ will frequently be used to denote the binary operation.
\end{mydef}

\begin{example}
Any group is a monoid. However, the positive integers $(\Z_{>0}, \cdot, 1)$ are also a monoid under multiplication despite not being a group.
\end{example}

\begin{mydef}
A monoid $M$ is said to be \textit{integral} if $m+n = m'+n$ implies that $m=m'$. Some authors will also use the term \textit{cancellative}.
\end{mydef}

\begin{example}
The integers under multiplication $(\Z, \cdot, 1)$ are not integral as $2\cdot 0 = 3\cdot 0$ but $2\neq 3$. Most monoids used in this paper will be integral.
\end{example}

\begin{mydef}
A \textit{morphism of monoids} is a map $\phi:M\rightarrow N$ such that $\phi(e_M) = e_N$ and $\phi(a\star b) = \phi(a)\star\phi(b)$ for all $a,b\in M$.
\end{mydef}

\begin{example}
The natural logarithm $\ln: (\Z_{>0},\cdot,1)\rightarrow (\R,+, 0)$ is a morphism of monoids.
\end{example}

\begin{mydef}
Let $A$ be a ring and denote by $A^\times$ the multiplicative monoid
$(A, \cdot, 1)$ and denote by $A^*$ the group of units of
$A^\times$. Similarly for a monoid $M$, define $M^*$ to be the group of units of $M$.
\end{mydef}

\begin{example}
For the ring of integers $\Z^\times = \Z$ as a set, but $\Z^* = \{-1,1\}$.
\end{example}

\begin{mydef}
Let $A$ be a ring, then a \textit{pre-log structure} on $A$ is a
pair $(M_A,\alpha_A)$ where $M_A$ is a monoid and $\alpha_M$ is a
homomorphism of monoids $\alpha_A:M_A\rightarrow A^\times$. If in
addition $\alpha_A:\alpha_A^{-1}\left(A^*\right)\rightarrow A^*$ is
an isomorphism we call $(M_A,\alpha_A)$ a \textit{log structure} on
$A$. Every ring comes equipped with a \textit{trivial log structure} given by the inclusion $A^*\rightarrow A$.
\end{mydef}

\begin{mydef}
A \textit{pre-log algebra} is a triple $(A,M_A,\alpha_A)$ where $A$ is a
ring and $(M_A,\alpha_A)$ is a pre-log structure on $A$. If in addition $(M_A,\alpha_A)$ is a log structure we call $(A,M_A,\alpha_A)$ a \textit{log algebra}. When it is clear
$A$ will be used to represent the triple.
\end{mydef}

\begin{mydef}
A \textit{morphism of pre-log algebras} is a pair $(f,f^\flat):A\rightarrow B$
where $f:A\rightarrow B$ is a ring homomorphism and
$f^\flat:M_A\rightarrow M_B$ is a monoid homomorphism such that
$f\circ \alpha_A = \alpha_B \circ f^\flat$, i.e. the following diagram commutes
\[
\xymatrix{
M_A \ar[d]_{\alpha_A} \ar[r]^{f^\flat} & M_B \ar[d]^{\alpha_B}\\
A \ar[r]_{f} & B
}
\]
When it is clear $f$ will be used to represent the pair. If $A$ and $B$ are both log algebras $f$ will be called a \textit{morphism of log algebras}, and we will also say that $B$ is a log algebra over $A$.
\end{mydef}

\begin{mydef}
Let $Q_1,Q_2$ be monoids and define the \textit{direct sum of} $Q_1$ \textit{and} $Q_2$, denoted $Q_1\oplus Q_2$, to be the set $Q_1\times Q_2$ where the binary operation is carried out component-wise.
\end{mydef}

It is typically not convenient to have to list elements of a log structure. Description of the monoid itself can be cumbersome. The notion of an amalgamated sum will allow for an easier description of monoids by giving a method to construct log structures from pre-log structures.

\begin{mydef}
Let $P, Q_1, Q_2$ be monoids with morphisms $u_i: P\rightarrow Q_i$. Define the \textit{amalgamated sum}, denoted $Q_1\oplus_P Q_2$, to be the monoid equipped with morphisms $v_i: Q_i\rightarrow Q_1\oplus_P Q_2$ that makes the following diagram cocartesian.
\[
\xymatrix{
Q_1\oplus_P Q_2 & Q_2 \ar[l]_{v_2}\\
Q_1 \ar[u]^{v_1}  & P \ar[u]_{u_2} \ar[l]^{u_1}
}
\]
\end{mydef}

\begin{prop}
Let $P, Q_1, Q_2$ be monoids with morphisms $u_i:P\rightarrow Q_i$, and suppose that either $P$ or $Q_2$ is a group. Then $Q_1\oplus_P Q_2 \cong Q_1\oplus Q_2/\sim$ where the equivalence relation is given by $(q_1,q_2) \sim (q_1',q_2')$ if there exists $p,p'\in P$ such that
\[
(q_1+u_1(p), q_2-u_2(p)) = (q_1'+u_1(p'), q_2'-u_2(p')).
\]
The morphisms $v_i: Q_i\rightarrow Q_1\oplus Q_2/\sim$ are given by $v_1(q_1) = (q_1,0)$, $v_2(q_2) = (0,q_2)$ for all $q_i\in Q_i$.
\end{prop}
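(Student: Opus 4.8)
The plan is to verify directly that the explicit quotient $Q_1\oplus Q_2/\sim$, equipped with the stated maps $v_1,v_2$, satisfies the universal property of the pushout; since a cocartesian object is unique up to unique isomorphism, this identifies it with $Q_1\oplus_P Q_2$. Before anything else I would pin down the meaning of the subtraction $q_2-u_2(p)$ in the relation, which is where the hypothesis that $P$ or $Q_2$ is a group first enters. If $Q_2$ is a group the subtraction is literal; if instead $P$ is a group, then $u_2(p)$ is invertible in $Q_2$ (a monoid morphism sends $-p$ to the inverse of $u_2(p)$, since $u_2(p)+u_2(-p)=u_2(0)=0$), so $q_2-u_2(p)$ still makes sense as $q_2+u_2(-p)$. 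With this understood I would check that $\sim$ is an equivalence relation: reflexivity takes $p=p'=0$, symmetry swaps the two tuples, and transitivity is the only delicate case, combining two instances by adding the auxiliary $P$-elements (passing to $p_1+p_2$ and $p_1'+p_2'$) and using invertibility of the $u_2$-images to rearrange and cancel.

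Next I would confirm that $\sim$ is a congruence on $Q_1\oplus Q_2$, i.e. that $(q_1,q_2)\sim(q_1',q_2')$ implies $(q_1+r_1,q_2+r_2)\sim(q_1'+r_1,q_2'+r_2)$ for every $(r_1,r_2)$; this is immediate using the same witnesses $p,p'$, and it endows the quotient set with a monoid structure for which the canonical projection is a morphism. I would then check that $v_1,v_2$ are monoid morphisms and that the square commutes, i.e. $v_1\circ u_1=v_2\circ u_2$: concretely $(u_1(p),0)\sim(0,u_2(p))$, witnessed by $p_1=0,\,p_1'=p$.

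The substance of the argument is the universal property. Given a monoid $T$ and morphisms $w_i:Q_i\rightarrow T$ with $w_1\circ u_1=w_2\circ u_2$, I would define $\psi:Q_1\oplus Q_2/\sim\,\rightarrow T$ on classes by $\psi([q_1,q_2])=w_1(q_1)+w_2(q_2)$. That $\psi$ respects the operation, preserves the identity, and satisfies $\psi\circ v_i=w_i$ is routine, and uniqueness follows since any such map must send $[q_1,q_2]=v_1(q_1)+v_2(q_2)$ to $w_1(q_1)+w_2(q_2)$. The one genuinely nontrivial point, and the place where the group hypothesis is indispensable, is well-definedness of $\psi$. Unwinding $(q_1,q_2)\sim(q_1',q_2')$ into the two coordinate equations, applying $w_1$ and $w_2$, and writing $c_p:=w_1(u_1(p))=w_2(u_2(p))$ (equal by the commutativity assumption on the $w_i$), I would add the two resulting identities to obtain $w_1(q_1)+w_2(q_2)+c_p+c_{p'}=w_1(q_1')+w_2(q_2')+c_p+c_{p'}$. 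To finish I must cancel $c_p+c_{p'}$, which is legitimate precisely because each $c_p$ is invertible in $T$: when $Q_2$ is a group because $u_2(p)$ is invertible, and when $P$ is a group because $p$ is invertible, and $w_i$ carries invertible elements to invertible elements. This cancellation is the crux of the proof; without a group present the $c_p$ need not be cancellable and $\psi$ would fail to be well-defined.
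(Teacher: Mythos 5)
Your proposal is correct and follows essentially the same route as the paper: verify that $\sim$ is an equivalence relation, that $v_1\circ u_1=v_2\circ u_2$, and that $(q_1,q_2)\mapsto w_1(q_1)+w_2(q_2)$ is a well-defined morphism satisfying the universal property, with the group hypothesis entering exactly where you place it (to make sense of $q_2-u_2(p)$ and to cancel the invertible images $w_i(u_i(p))$ in the target). Your explicit check that $\sim$ is a congruence, so that the quotient set actually carries a monoid structure, is a small point the paper leaves implicit.
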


\begin{rem}
Throughout this paper equivalence classes will be referred to by their elements.
\end{rem}

\begin{proof}
First note that $-u_2(p)$ and $-u_2(p')$ are both well defined in either case. If $P$ is a group then the image of any $p\in P$ is a unit under $u_i$. On the other hand if $Q_2$ is a group then $u_2(p)$ is invertible for all $p\in P$. It must be verified that $\sim$ is in fact an equivalence relation.
\begin{enumerate}
\item[i.] Reflexive: Let $p=p'=0$ then $(q_1+u_1(0),q_2-u_2(0)) = (q_1+u_1(0), q_2+u_2(0))$ for all $q_i\in Q_i$. Therefore $\sim$ is reflexive.

\item[ii.] Symmetric: This is immediate by the symmetry of equality.

\item[iii.] Transitive: Suppose $(q_1,q_2)\sim (q_1',q_2')$ and $(q_1',q_2')\sim (q_1'', q_2'')$. Then there exists $p, p', m, m'\in P$ such that
    \[
    \begin{split}
    (q_1+u_1(p), q_2-u_2(p)) &= (q_1'+u_1(p'), q_2'-u_2(p'))\\
    (q_1'+u_1(m), q_2'-u_2(m)) &= (q_1''+u_1(m'), q_2''-u_1(m')).
    \end{split}
    \]
    Combining these expressions gives
    \[
    (q_1+u_1(p+m), q_2-u_2(p+m)) = (q_1''+u_1(p'+m'), q_2-u_2(p'+m')).
    \]
    Therefore $(q_1,q_2)\sim(q_1'',q_2'')$ and $\sim$ is transitive.
\end{enumerate}
Next we must verify that $v_1\circ u_1 = v_2\circ u_2$. Let $p\in P$, then in $Q_1\oplus Q_2$
\[
(u_1(p)+u_1(0), 0-u_2(0)) = (0+u_1(p), u_2(p)-u_2(p))
\]
or $(u_1(p), 0)\sim(0,u_2(p))$ and $v_1\circ u_1(p) = v_2\circ u_2(p)$ for all $p\in P$.

To complete the proof we must verify that $Q_1\oplus Q_2/\sim$ is in fact universal. Let $Q$ be a monoid and $w_i: Q_i\rightarrow P$ be such that $w_1\circ u_1 = w_2\circ u_2$. Define $h: (Q_1\oplus Q_2/\sim)\rightarrow Q$ by $(q_1,q_2) \mapsto w_1(q_1) + w_2(q_2)$. Then we claim $h$ is well defined, unique and that the following diagram commutes
\[
\xymatrix{
Q\\
& Q_1\oplus Q_2/\sim \ar@{.>}[ul]|-h & Q_2 \ar[l]^<<<<{v_2} \ar@/_/[ull]_{w_2}\\
& Q_1 \ar[u]_{v_1} \ar@/^/[uul]^{w_1} & P \ar[l]^{u_1} \ar[u]_{u_2}
}
\]
Let $(q_1,q_2)\sim (q_1',q_2')$, then there exists $p, p'\in P$ such that
\[
(q_1+u_1(p),q_2-u_2(p)) = (q_1'+u_1(p'), q_2'-u_2(p'))
\]
and
\[
\begin{split}
h(q_1,q_2) &= w_1(q_1)+w_2(q_2)\\
&= w_1(q_1)+w_2(q_2)+w_1(u_1(p))-w_2(u_2(p))\\
&= w_1(q_1+u_1(p))+w_2(q_2-u_2(p))\\
&= w_1(q_1'+u_1(p')) + w_2(q_2'-u_2(p'))\\
&= w_1(q_1')+w_2(q_2')+w_1(u_1(p'))-w_2(u_2(p'))\\
&= h(q_1',q_2').
\end{split}
\]
Therefore $h$ is well defined and by construction the diagram commutes. For uniqueness let $h': Q_1\oplus Q_2/\sim\rightarrow Q$ be any other morphism making the diagram commute, then
\[
\begin{split}
h'(q_1,q_2) &= h'(v_1(q_1)+v_2(q_2))\\
&= h'(v_1(q_1)) + h'(v_2(q_2))\\
&= w_1(q_1) + w_2(q_2)
\end{split}
\]
and $h'=h$.
\end{proof}

\begin{rem}
The hypothesis that $Q_2$ is a group can be replaced by the hypothesis that $Q_1$ is a group by just switching the ordered pairs in the proof. In general it is difficult to describe the amalgamated sum, fortunately throughout this paper we will only need amalgamated sums where at least one of the monoids involved is a group. The construction is particularly simple in the event that $P$ is a group, in this case the equivalence relation can be shortened to $(q_1, q_2)\sim (q_1+u_1(p), q_2+u_2(-p))$ for all $q_i\in Q_i$ and $p\in P$.
\end{rem}

With the amalgamated sum in hand there are a variety of ways to define log structures. One method is to push forward a log structure via a morphism of rings, or in the category of schemes pull back the log structure.

\begin{example}\label{ex:strict}
Let $A$ be a log algebra, $B$ be a ring and $f:A\rightarrow B$ a morphism of rings. The log structure $M_A$ on $A$ gives rise to a log structure $\alpha_B:M_A\oplus_{A^*} B^* \rightarrow B$. The morphism, $\alpha_B$, comes from the universal property of amalgamated sums and is defined by $(m,b)\mapsto f(\alpha_A(m))b$.
\end{example}

This example is common enough that it is worthwhile to give it a definition.

\begin{mydef}
A morphism of log algebras $(f,f^\flat):A\rightarrow B$ is said to
be \textit{strict} if the induced map $M_A\oplus_{A^*}B^*\rightarrow M_B$ is
an isomorphism.
\end{mydef}

Another way log structures commonly arise is by associating a log structure to a pre-log structure.

\begin{prop}\label{existslog}
Let $\alpha: M\rightarrow A$ be a pre-log structure on a ring $A$. Then $\alpha_A: M\oplus_{\alpha^{-1}(A^*)} A^*\rightarrow A$ is a log structure on $A$, where $\alpha_A$ comes from the universal property of amalgamated sums. Moreover this log structure is universal in the sense that for any log structure $\alpha_A': M'\rightarrow A$ and commutative diagram of monoids
\[
\xymatrix{
M \ar[r]^\phi \ar[dr]_{\alpha} & M' \ar[d]^{\alpha_A'}\\
& A
}
\]
there exists a unique morphism $h:M\oplus_{\alpha^{-1}(A^*)} A^*\rightarrow M'$ such that $\phi = h\circ v_1$, where $v_1:M\rightarrow M\oplus_{\alpha^{-1}(A^*)} A^*$ is the morphism coming from the definition of amalgamated sum.
\end{prop}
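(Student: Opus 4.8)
The proof divides into two parts: verifying that $\alpha_A$ is genuinely a log structure, and then establishing the universal property. Throughout I would write $P = \alpha^{-1}(A^*)$, let $u_1 : P \to M$ be the inclusion and $u_2 = \alpha|_P : P \to A^*$, and recall that $A^*$ is a group, so the explicit description of $M \oplus_P A^*$ from the preceding proposition is available. The map $\alpha_A$ is the one furnished by the universal property of the amalgamated sum applied to $\alpha : M \to A^\times$ and the inclusion $A^* \hookrightarrow A^\times$, which agree on $P$; concretely $\alpha_A(m,a) = \alpha(m)a$, so that $\alpha_A \circ v_1 = \alpha$ and $\alpha_A \circ v_2$ is the inclusion of $A^*$.

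For the first part I would check that the restriction $\alpha_A : \alpha_A^{-1}(A^*) \to A^*$ is an isomorphism. Surjectivity is immediate since $a = \alpha_A(e_M, a)$ for every $a \in A^*$. Next, observe that if $\alpha_A(m,a) = \alpha(m)a \in A^*$ then $\alpha(m) \in A^*$, whence $m \in P$; thus every class in $\alpha_A^{-1}(A^*)$ has a representative $(m,a)$ with $m \in P$. Injectivity then reduces to showing that whenever $m, m' \in P$ satisfy $\alpha(m)a = \alpha(m')a'$ one has $(m,a) \sim (m',a')$, which I would verify directly from the equivalence relation by taking $p = m'$ and $p' = m$: the first coordinates agree because $M$ is commutative and $u_1$ is the inclusion, and the second coordinates agree precisely because $\alpha(m)a = \alpha(m')a'$.

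For the universal property, suppose given a log structure $\alpha_A' : M' \to A$ and a morphism $\phi : M \to M'$ with $\alpha_A' \circ \phi = \alpha$. Since $\alpha_A'$ is a log structure, it restricts to an isomorphism $(\alpha_A')^{-1}(A^*) \xrightarrow{\sim} A^*$; let $\beta : A^* \to M'$ be the inverse of this isomorphism followed by the inclusion into $M'$. The plan is to produce $h$ from the universal property of the amalgamated sum using $\phi$ and $\beta$, so the key compatibility to check is that $\phi \circ u_1 = \beta \circ u_2$ on $P$. For $p \in P$ both $\phi(p)$ and $\beta(\alpha(p))$ lie in $(\alpha_A')^{-1}(A^*)$ and are carried by $\alpha_A'$ to $\alpha(p)$, so they coincide by the injectivity built into the log structure on $M'$. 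The universal property of $M \oplus_P A^*$ then yields a unique $h$ with $h \circ v_1 = \phi$ and $h \circ v_2 = \beta$, and one checks $\alpha_A' \circ h = \alpha_A$ by comparing both sides on the images of $v_1$ and $v_2$.

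The point I expect to require the most care is uniqueness, since the statement only imposes $\phi = h \circ v_1$; the resolution is that $h$ is understood as a morphism \emph{over} $A$, i.e. $\alpha_A' \circ h = \alpha_A$. Granting this, for any $a \in A^*$ the element $h(v_2(a))$ lies in $(\alpha_A')^{-1}(A^*)$ and satisfies $\alpha_A'(h(v_2(a))) = \alpha_A(v_2(a)) = a$, so injectivity of $\alpha_A'$ on $(\alpha_A')^{-1}(A^*)$ forces $h \circ v_2 = \beta$; since every element of $M \oplus_P A^*$ is a product $v_1(m)\,v_2(a)$, the conditions $h \circ v_1 = \phi$ and $h \circ v_2 = \beta$ pin down $h$ completely. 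The main obstacle is thus not a computation but the recognition that the isomorphism property of the log structure $M'$ is exactly what is needed at three separate points: to define $\beta$, to verify compatibility on $P$, and to force uniqueness.
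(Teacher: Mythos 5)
Your proof is correct and follows essentially the same route as the paper's: the same verification that $\alpha_A$ restricts to a bijection on units (surjectivity via $(e_M,a)$, injectivity by reducing to representatives with first coordinate in $\alpha^{-1}(A^*)$ and unwinding the equivalence relation), and the same use of the inverse of $\alpha_A'$ on units to supply the second leg of the cocone and invoke the universal property of the amalgamated sum. Your added care about why uniqueness of $h$ follows from the single condition $\phi = h\circ v_1$ --- by reading $h$ as a morphism over $A$, so that $h\circ v_2$ is forced by injectivity of $\alpha_A'$ on its units --- addresses a point the paper's proof passes over silently, and is a worthwhile refinement rather than a different approach.
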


\begin{proof}
It must be verified that $\alpha_A: M\oplus_{\alpha^{-1}(A^*)} A^*\rightarrow A$ is a log structure. Let $(m, a)\in M\oplus_{\alpha^{-1}(A^*)} A^*$ and suppose that $\alpha_A(m,a) = q \in A^*$. Since $q,a\in A^*$, $\alpha(m) = q/a\in A^*$ or $m\in \alpha^{-1}(A^*)$. In particular $(m,a)+(u_1(0),1/\alpha(0)) = (0,q) + (u_1(m), 1/\alpha(m))$, that is $(m,a)\sim (0,q)$ and $\alpha_A:\alpha_A^{-1}(A^*)\rightarrow A^*$ is injective. On the other hand for any $a\in A^*$, $\alpha_A(0,a) = a$ and thus $\alpha_A$ induces an isomorphism of units $\alpha_A: \alpha_{A}^{-1}(A^*)\rightarrow A$ making it a log structure.

Let $M'$ and $\phi$ be as in the statement of the theorem and consider the diagram
\[
\xymatrix{
M' & A^* \ar[l]_{(\alpha'_A)^{-1}}\\
M \ar[u]^\phi & \alpha^{-1}(A^*) \ar[l]^{u_1} \ar[u]_\alpha
}
\]
Let $m\in \alpha^{-1}(A^*)$, by hypothesis $\alpha'_A\circ\phi\circ u_1(m) =\alpha\circ u_1(m)$. However $\alpha\circ u_1(m)\in A^*$ by assumption and $\alpha'_A$ induces an isomorphism on the group of units, therefore
\[
\phi\circ u_1(m) = (\alpha'_A)^{-1}\circ\alpha\circ u_1(m) = (\alpha'_A)^{-1}\circ\alpha(m)
\]
and the diagram commutes. By the universal property of amalgamated sums the existence and uniqueness of $h:M\oplus_{\alpha^{-1}(A^*)} A^*\rightarrow M'$ is guaranteed.
\end{proof}

With this construction in hand we are able to define log structures via pre-log structure. In truth this is just a generalization of Example \ref{ex:strict}. In that example $f(\alpha_A)$ defined the pre-log structure on $B$. We can also define a log structure by simply giving a set of elements of the ring.

\begin{example}
Let $A = k[x,y]$. Then we can talk about the log structure associated to the elements $x$ and $y-x^2$. The elements $x$ and $y-x^2$ generate a sub-monoid of $A^\times$ and therefore a pre-log structure. By the previous proposition there is a universal log structure which can be associated to the pre-log structure. In this case we get $\alpha_A:\N^2\oplus k^*\rightarrow A$, $((m,n),c)\mapsto cx^m(y-x^2)^n$.
\end{example}

Localized log algebras can also be defined via the amalgamated sum.

\begin{mydef}\label{def:monoidlocal}
Let $(A,M_A,\alpha_A)$ be a log algebra, $S\subset A$ to be a multiplicative subset and $\varphi_S:A\rightarrow S^{-1}A$ the natural map in the category of rings. Define the \textit{localized log algebra of} $A$ \textit{by} $S$ to be
\[
(S^{-1}A, S^{-1}M_A, S^{-1}\alpha_A)
\]
where
\[
S^{-1}M_A := M_A\oplus_{(\varphi_S\circ\alpha_A)^{-1}((S^{-1}A)^*)}(S^{-1}A)^*
\]
and $S^{-1}\alpha_A(m,a) := \varphi_S(\alpha_A(m))a$. Additionally define $\varphi_S^\flat: M_A\rightarrow S^{-1}M_A$ to be the unique morphism coming from the universal property of amalgamated sums.

Following the standard notational conventions for rings and modules, define
\[
(M_A)_p := \{a\in A: a\notin p\}^{-1}M_A
\]
for any prime ideal $p\subset A$ and
\[
(M_A)_f := \{f^m\in A: m\in\N\}^{-1}M_A
\]
for any $f\in A$.
\end{mydef}

\begin{rem}
The above construction defines a log algebra by Proposition \ref{existslog}.
\end{rem}

\begin{prop}\label{quotientuniv}
Let $A$ be a log algebra and $S\subset A$ a multiplicative subset. Then for any morphism of log algebras $f:A\rightarrow R$ such that $f(S)\subset R^*$, $f$ factors uniquely through $\varphi_S:A\rightarrow S^{-1}A$ in the category of log algebras.
\end{prop}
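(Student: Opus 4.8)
The plan is to build the factorization in two stages, the ring map first and then the monoid map, and afterwards to check that the resulting pair is a morphism of log algebras and that it is unique. Throughout, write $f = (f, f^\flat)$, recall that $S^{-1}M_A = M_A \oplus_P (S^{-1}A)^*$ with $P = (\varphi_S\circ\alpha_A)^{-1}((S^{-1}A)^*)$, and note that $\varphi_S^\flat$ is the canonical map $v_1 : M_A \to S^{-1}M_A$ coming from the amalgamated sum. For the ring component, the hypothesis $f(S)\subset R^*$ is exactly what the ordinary universal property of ring localization demands, so $f$ factors uniquely as $f = g\circ\varphi_S$ for a ring homomorphism $g: S^{-1}A\to R$. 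All the real content lies in producing the monoid component $g^\flat : S^{-1}M_A \to M_R$.

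To define $g^\flat$ I would invoke the universal property of the amalgamated sum. Since $(S^{-1}A)^*$ is a group, the amalgamated-sum proposition applies, and a morphism out of $S^{-1}M_A$ is determined by a pair of morphisms out of $M_A$ and out of $(S^{-1}A)^*$ that agree after being pulled back to $P$. For the map out of $M_A$ take $f^\flat$ itself. For the map out of $(S^{-1}A)^*$, observe that $g$ carries units to units, giving $g : (S^{-1}A)^* \to R^*$; composing this with the inverse of the isomorphism $\alpha_R : \alpha_R^{-1}(R^*)\to R^*$ — which exists precisely because $R$ is a log algebra — produces a morphism $\beta : (S^{-1}A)^*\to M_R$.

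The hard part will be verifying the compatibility of $f^\flat$ and $\beta$ on $P$, namely that $f^\flat(m) = \beta(\varphi_S(\alpha_A(m)))$ for every $m\in P$; this is the one step that uses both hypotheses. Unwinding the right-hand side with $g\circ\varphi_S = f$ and then with the identity $f\circ\alpha_A = \alpha_R\circ f^\flat$ (valid because $f$ is a log morphism) reduces the claim to $\alpha_R^{-1}(\alpha_R(f^\flat(m))) = f^\flat(m)$, which holds as soon as $f^\flat(m)\in\alpha_R^{-1}(R^*)$. That membership in turn follows because $m\in P$ forces $\varphi_S(\alpha_A(m))$ to be a unit, whence $\alpha_R(f^\flat(m)) = f(\alpha_A(m)) = g(\varphi_S(\alpha_A(m)))$ is a unit of $R$. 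The universal property then yields a unique $g^\flat$ with $g^\flat\circ v_1 = f^\flat$ and $g^\flat\circ v_2 = \beta$.

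Three routine points then finish the argument. First, that $(g,g^\flat)$ is a morphism of log algebras: checking $\alpha_R\circ g^\flat = g\circ S^{-1}\alpha_A$ on the generators $v_1(m)$ and $v_2(a)$ is a short computation using $\alpha_R\circ\beta = g|_{(S^{-1}A)^*}$ and $\alpha_R\circ f^\flat = f\circ\alpha_A$. Second, that it really factors $f$: one has $g^\flat\circ\varphi_S^\flat = g^\flat\circ v_1 = f^\flat$ together with $g\circ\varphi_S = f$. Third, uniqueness: any competing factorization has the same ring map by the localization universal property, and its monoid map must agree with $g^\flat$ on the image of $v_1$ (being forced equal to $f^\flat$) and on the image of $v_2$ (being forced equal to $\beta$ by the log-morphism identity $\alpha_R\circ(-) = g\circ S^{-1}\alpha_A$ combined with the unit isomorphism in $R$), hence coincides with $g^\flat$ by the uniqueness half of the amalgamated sum's universal property.
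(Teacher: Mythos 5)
Your proposal is correct and follows essentially the same route as the paper's proof: the ring map comes from the universal property of ring localization, and the monoid map from the universal property of the amalgamated sum defining $S^{-1}M_A$, using the morphisms $f^\flat$ out of $M_A$ and $\alpha_R^{-1}\circ g$ out of $(S^{-1}A)^*$, with the key compatibility check on $P=(\varphi_S\circ\alpha_A)^{-1}((S^{-1}A)^*)$ carried out exactly as in the paper. Your write-up is somewhat more explicit about verifying that the resulting pair is a log-algebra morphism and about uniqueness, steps the paper leaves implicit.
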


\begin{proof}
By the universal property of localized rings in the category of rings there exists a unique morphism $h:S^{-1}A\rightarrow R$ such that $f = h\circ \varphi_S$. For the case of the log structure consider the diagram
\[
\xymatrix{
M_R & (S^{-1}A)^*\ar[l]_{\alpha_R^{-1}\circ h}\\
M_A \ar[u]^{f^\flat}& (\varphi_S\circ\alpha_A)^{-1}((S^{-1}A)^*) \ar[l] \ar[u]_{\varphi_S\circ\alpha_A}
}
\]
We have that $\alpha_R^{-1}\circ h\circ \varphi_S\circ\alpha_A = \alpha_R^{-1}\circ f\circ \alpha_A$. However since $f$ is a morphism of log algebras $\alpha_R^{-1}\circ f\circ \alpha_A = f^\flat$ when well defined. However for any $m\in (\varphi_S\circ\alpha_A)^{-1}((S^{-1}A)^*)\subset M_A$,
$f\circ\alpha_A(m)\in R^*$ and the morphism is well defined. Therefore the diagram commutes and by the universal property of amalgamated sums $\varphi_S^\flat:S^{-1}M_A\rightarrow M_R$ is uniquely determined.
\end{proof}

While we have all of the definitions needed for constructing log Hasse-Schmidt rings, one more definition will be useful as these are among the most common class of log algebras.

\begin{mydef}
A log structure $\alpha_M: M_A\rightarrow A$ is said to be \textit{finitely generated over} $A^*$ if there exists a commutative diagram
\[
\xymatrix{
P \ar[r]^\phi \ar[dr]_{\alpha_P} & M_A \ar[d]^{\alpha_M}\\
& A
}
\]
where $P$ is a finitely generated monoid and $\phi$ induces an isomorphism $P\oplus_{\alpha_P^{-1}(A^*)}A^*\cong M_A$. If in addition $P$ is integral, we say that $A$ is \textit{fine}.
\end{mydef}

Now that the theory of log algebras have been defined sufficiently for the author's applications, log schemes can be appropriately defined.

\begin{mydef}
A \textit{pre-log structure} on a scheme $X$ is a pair $(M_X,\alpha_X)$ where $M_X$ is a sheaf of monoids and $\alpha_X:M_X\rightarrow \Oc_X$ is a morphism of sheaves of monoids. A pre-log structure is called a \textit{log structure} if in addition $\alpha_X:\alpha_X^{-1}(\Oc_X^*)\rightarrow \Oc_X^*$ is an isomorphism of sheaves.
\end{mydef}

\begin{mydef}
A \textit{morphism of pre-log structures} $f:M_X\rightarrow M_X'$ on a scheme $X$ is a morphism of sheaves of monoids on $X$ such that the following diagram commutes.
\[
\xymatrix{
M_X \ar[d]_f\ar[dr]^{\alpha_X} &\\
M_{X}'\ar[r]_{\alpha_X'} & \Oc_X
}
\]
\end{mydef}

\begin{mydef}
A \textit{log scheme} is a triple $(X, M_X, \alpha_X)$ where $X$ is a scheme and $(M_X,\alpha_X)$ is a log structure on $X$. When it is clear $X$ will be used to represent the triple.
\end{mydef}

\begin{mydef}
A \textit{morphism of log schemes} is a pair $(f,f^\flat): (X,M_X,\alpha_X)\rightarrow (Y,M_Y,\alpha_Y)$ where $f:X\rightarrow Y$ is a morphism of schemes, $f^\flat:M_Y\rightarrow f_*(M_X)$ is a morphism of pre-log structures on $Y$, and the following diagram commutes.
\[
\xymatrix{
M_Y\ar[r]^{f^\flat}\ar[d]_{\alpha_Y} & f_*(M_X)\ar[d]^{f_*(\alpha_X)}\\
\Oc_Y\ar[r]^{f^\#} & f_*(\Oc_X)
}
\]
When it is clear $f$ will be used to represent the pair.
\end{mydef}

Just as quasi-coherent sheaves of modules play an important role in the theory of schemes, quasi-coherent log structures will be our primary object of study with regard to log schemes. Using a similar construction to the sheafification of a module over an affine scheme we can define a log structure on an affine scheme associated to a pre-log structure on the ring of regular functions. Despite the similarities in definition there are subtle differences in the theory as demonstrated in Example \ref{badmonoidsheaf}.

\begin{mydef}\label{sheafdef}
Let $\alpha:M\rightarrow A$ be a pre-log structure on a ring $A$. For $p\in\spec A$, we write $\alpha^{-1}(A_p^*)$ for the inverse image of $A_p^*$ under the composition $M\rightarrow A\rightarrow A_p$. For each open set $U\subset X :=\spec A$ define $\widetilde{M}(U)$ to be the set of functions
\[
s:U\rightarrow \coprod_{p\in U} M\oplus_{\alpha^{-1}(A_p^*)} A_p^*
\]
such that
\begin{enumerate}
\item[i.] for every $p\in U$, $s(p)\in M\oplus_{\alpha^{-1}(A_p^*)} A_p^*$ and

\item[ii.] for every $p\in U$ there exists an open affine neighborhood $V\subset U$ containing $p$ and $t\in M\oplus_{\alpha^{-1}(\Oc_X(V)^*)} \Oc_X(V)^*$ such that for all $q\in V$ the image of $t$ in the natural map $M\oplus_{\alpha^{-1}(\Oc_X(V)^*)} \Oc_X(V)^*\rightarrow M\oplus_{\alpha^{-1}(A_q^*)} A_q^*$ is equal to $s(q)$.
\end{enumerate}
Additionally define $\widetilde{\alpha}: \widetilde{M}\rightarrow \Oc_X$ by
$\widetilde{\alpha}(s)(p) := \alpha_{A_p}\circ s(p)$.
\end{mydef}

\begin{prop}
Let everything be as in Definition \ref{sheafdef}. Then $\widetilde{M}$ defines a log structure on $X$.
\end{prop}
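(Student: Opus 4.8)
The plan is to verify the three conditions that make $\widetilde{M}$ a log structure: that it is a sheaf of monoids, that $\widetilde{\alpha}$ is a morphism of sheaves of monoids into $\Oc_X$, and that $\widetilde{\alpha}$ restricts to an isomorphism $\widetilde{\alpha}^{-1}(\Oc_X^*)\to\Oc_X^*$. The first two are formal and I would dispatch them quickly, while the third is the substance and will be checked on stalks.

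First I would give $\widetilde{M}(U)$ its monoid structure by the pointwise operation $(s\cdot t)(p):=s(p)\cdot t(p)$ in the fiber $M\oplus_{\alpha^{-1}(A_p^*)}A_p^*$, with identity the section $p\mapsto 0$. Conditions (i) and (ii) of Definition \ref{sheafdef} are each local and preserved under the operation: if $s,t$ are represented near $p$ by $a,b$ over a common affine $V$, then $s\cdot t$ is represented by $a+b$, since the maps from $M\oplus_{\alpha^{-1}(\Oc_X(V)^*)}\Oc_X(V)^*$ to the fibers are monoid homomorphisms coming from the universal property of the amalgamated sum. The restriction maps are restriction of functions, so the presheaf axioms are immediate; and because a section is literally a function on points subject to a purely local condition, compatible sections over an open cover glue to a unique section, giving the sheaf axioms. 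For the pre-log structure, $\widetilde{\alpha}(s)$ is locally equal to the regular function $\alpha_{\Oc_X(V)}(t)$ whenever $t$ represents $s$ on $V$, hence lies in $\Oc_X(U)$, and $\widetilde{\alpha}(s\cdot t)=\widetilde{\alpha}(s)\widetilde{\alpha}(t)$ because each $\alpha_{A_p}$ is a monoid homomorphism.

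The crux is the unit condition, and I would reduce it to a stalk computation. The claim is that the natural map $\widetilde{M}_p\to M\oplus_{\alpha^{-1}(A_p^*)}A_p^*$, $[s]_p\mapsto s(p)$, is an isomorphism of monoids identifying $\widetilde{\alpha}_p$ with the structure map $\alpha_{A_p}$. Surjectivity is easy: an element of the fiber is represented by some $(m,a/g)$ with $a,g\notin p$, which extends to a section over $D(ag)$. Injectivity — that two local representatives $t,t'$ over an affine $V\ni p$ with equal image in the fiber at $p$ already agree over some smaller $W\ni p$ — is the heart of the matter, and it follows from the identity
\[
M\oplus_{\alpha^{-1}(A_p^*)}A_p^* \;\cong\; \varinjlim_{W\ni p}\; M\oplus_{\alpha^{-1}(\Oc_X(W)^*)}\Oc_X(W)^*,
\]
the limit taken over affine neighborhoods of $p$. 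This is exactly the analog of $M_p=\varinjlim_{f\notin p}M_f$ for modules: it holds because the amalgamated sum is a pushout, hence commutes with the filtered colimit $\varinjlim_W\Oc_X(W)^*=A_p^*$, while $\varinjlim_W\alpha^{-1}(\Oc_X(W)^*)=\alpha^{-1}(A_p^*)$ as an increasing union of submonoids of $M$. This step is where I expect essentially all of the real work to lie.

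Granting the stalk identity, I would finish by invoking Proposition \ref{existslog}: on each stalk the map $\widetilde{\alpha}_p=\alpha_{A_p}\colon M\oplus_{\alpha^{-1}(A_p^*)}A_p^*\to A_p$ is precisely the universal log structure attached to the pre-log structure $M\to A\to A_p$, so $\alpha_{A_p}^{-1}(A_p^*)\to A_p^*$ is an isomorphism. Since the locus where the regular function $\widetilde{\alpha}(s)$ is a unit is open, the stalk of the subsheaf $\widetilde{\alpha}^{-1}(\Oc_X^*)$ at $p$ is exactly $\alpha_{A_p}^{-1}(A_p^*)$; thus $\widetilde{\alpha}^{-1}(\Oc_X^*)\to\Oc_X^*$ is an isomorphism on every stalk and hence an isomorphism of sheaves. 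I would note that this argument says nothing about the global sections $\widetilde{M}(X)$, which in contrast to the module case need not recover $M\oplus_{\alpha^{-1}(A^*)}A^*$ — the discrepancy anticipated in Example \ref{badmonoidsheaf} — but this plays no role in verifying that $\widetilde{M}$ is a log structure.
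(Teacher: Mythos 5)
Your proof is correct, but it takes a genuinely different route from the paper's. The paper never passes to stalks: for the unit condition it fixes an arbitrary open $U$ and a unit $r\in\Oc_X(U)^*$, observes that the function $U\to\coprod_p A_p^*$ determined by $r$ lifts \emph{uniquely} to a function into $\coprod_p M\oplus_{\alpha^{-1}(A_p^*)}A_p^*$ because each fiber map $\alpha_{A_p}$ is already a log structure on $A_p$ by Proposition \ref{existslog}, and then checks that this lift satisfies condition ii of Definition \ref{sheafdef} globally by taking $V=U$ and $t=(0,r)$. That gives bijectivity of $\widetilde{\alpha}^{-1}(\Oc_X^*)(U)\to\Oc_X(U)^*$ directly on sections, with no need for the stalk computation. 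You instead prove the stalk identity $\widetilde{M}_p\cong M\oplus_{\alpha^{-1}(A_p^*)}A_p^*$ and reduce to Proposition \ref{existslog} fiberwise; what you call ``the heart of the matter'' is precisely the content of the paper's Proposition \ref{monoidstalk}, which it states and proves separately (after Example \ref{badmonoidsheaf}) rather than inside this proposition. Your filtered-colimit argument --- that the amalgamated sum, being a pushout, commutes with $\varinjlim_W\Oc_X(W)^*=A_p^*$ and $\varinjlim_W\alpha^{-1}(\Oc_X(W)^*)=\alpha^{-1}(A_p^*)$ --- is in fact a cleaner justification of that proposition than the paper's two-maps-are-mutually-inverse argument, so your route proves more at the cost of being longer; the paper's route is shorter here but must still establish the stalk description later for use in Lemma \ref{stalksheaf} and Theorem \ref{qcsheaf}. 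Your closing step (openness of the unit locus, so $(\widetilde{\alpha}^{-1}(\Oc_X^*))_p=\widetilde{\alpha}_p^{-1}(A_p^*)$, and an isomorphism on all stalks is an isomorphism of sheaves) is sound.
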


\begin{proof}
By construction $\widetilde{M}$ is a sheaf of monoids, so it suffices to check that $\widetilde{\alpha}$ is a well defined morphism of sheaves of monoids inducing an isomorphism on units. Let $U\subseteq X$ be a non-empty open subset, $s\in \widetilde{M}(U)$ a section, and $p\in U$ a point. Let $t=(m,v)$ and $V$ be as in property ii of the previous definition where $m\in M$ and $v$ is a unit on $V$. Then for every $q\in V$ we have
\[
\begin{split}
\widetilde{\alpha}(s)(q) &= \alpha_{A_q}(t)\\
&= \alpha_{A_q}(m,v)\\
&= \alpha(m)v\\
&= \alpha_{\Oc_X(V)}(m,v)\\
&= \alpha_{\Oc_X(V)}(t)
\end{split}
\]
where the natural morphisms $\Oc_X(V)\rightarrow A_q$ have been omitted. Since $p\in U$ was arbitrary $\widetilde{\alpha}(s)$ defines a section of the structure sheaf on $U$ and therefore $\widetilde{M}$ is a pre-log structure on $X$.

To see that $\widetilde{M}$ is in fact a log structure on $X$ let $r\in \Oc_X(U)^*$, then
\[
r:U\rightarrow \coprod_{p\in U} A_p^*
\]
lifts uniquely to a function
\[
s: U\rightarrow \coprod_{p\in U} A_p^*\rightarrow \coprod_{p\in U} M\oplus_{\alpha^{-1}(A_p^*)} A_p^*
\]
because $\alpha_{A_p}:M\oplus_{\alpha^{-1}(A_p^*)} A_p^*\rightarrow A_p$ is a log structure for every $p\in U$ by Proposition \ref{existslog}. The function $s$ trivially satisfies property i in Definition \ref{sheafdef} and for property ii let $V=U$ and $t=(0,r)$. Therefore $s\in\widetilde{M}(U)$, since $U$ and $r$ were arbitrary $\widetilde{M}$ is a log structure on $X$.
\end{proof}

\begin{mydef}
A log scheme $X$ is said to be \textit{quasi-coherent} if there exists an open affine cover $\{\spec A_i\}_{i\in I}$ and pre-log structures $\{\alpha_i: M_i\rightarrow A_i\}$ such that $M_X|_{\spec A_i}\cong \widetilde{M_i}$ are isomorphic as sheaves of monoids for every $i\in I$. If the $M_i$ are all finitely generated (respectively integral, respectively both) $X$ is said to be \textit{coherent} (respectively \textit{integral}, respectively \textit{fine}).
\end{mydef}

In the case of modules over a ring the sheafification process is fully determined by localizing the module. This is precisely what makes quasi-coherent sheaves of modules useful. In particular if $N$ is a module over a ring $A$, $\widetilde{N}$ the corresponding sheaf on modules on $\spec{A}$ and $f\in A$ then $\widetilde{N}(D(f)) = N\otimes_A A_f$. Unfortunately the same does not hold in the case of log structures, as the following example demonstrates.

\begin{example}\label{badmonoidsheaf}
Let $k$ be a field of characteristic not equal to $2$, $X = \spec k[x]$, $M = \N\oplus\N$ and $\alpha:M\rightarrow X$ defined by $\alpha(p,q) = (x-2)^p(x+1)^p(x+2)^q(x-1)^q$. The variety $X$ is covered by the open sets $U_1 = D(x^2-1)$ and $U_2 = D(x^2-4)$. Define $s_1=(1,0,(x-1)/(x+1))\in M\oplus_{\alpha^{-1}(\Oc_X(U_1)^*)} \Oc_X(U_1)^*$ and $s_2=(0,1,(x-2)/(x+2))\in M\oplus_{\alpha^{-1}(\Oc_X(U_2)^*)} \Oc_X(U_2)^*$. On the intersection $U_1\cap U_2$ we have $M\oplus_{\alpha^{-1}(\Oc_X(U_1\cap U_2)^*)} \Oc_X(U_1\cap U_2)^* \cong (k[x]_{(x^2-1)(x^2-4)})^*$, and $s_1=s_2=(x-1)(x-2)$. Therefore $s_1$ and $s_2$ glue to form a section $s\in \widetilde{M}(X)$. However, every element of $M\oplus_{\alpha^{-1}(k[x]^*)}k[x]^*$ when restricted to $U_1\cap U_2$ is of the form $c(x-2)^p(x+1)^p(x+2)^q(x-1)^q$ where $c\in k^*$ and $p,q\in \N$. Therefore $s\notin M\oplus_{\alpha^{-1}(k[x]^*)}k[x]^*$, and in particular $\widetilde{M}(X) \neq M\oplus_{\alpha^{-1}(k[x]^*)}k[x]^*$.
\end{example}

That is to say that the sheafification process adds additional sections. One cannot obtain a sheaf by simply constructing the minimal log structure on each open subset of $X$. Fortunately not all is lost.

\begin{prop}\label{monoidstalk}
Let $\alpha:M\rightarrow A$ be a pre-log structure on a ring $A$. Then for every $p\in \spec A$,
\[
\widetilde{M}_p \cong M\oplus_{\alpha^{-1}(A_p^*)}A_p^*.
\]
\end{prop}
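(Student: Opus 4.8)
The plan is to exhibit the natural ``evaluation at $p$'' map and prove it is an isomorphism by the same two-step argument (surjectivity, then injectivity) one uses to identify the stalks of a quasi-coherent sheaf of modules with a localization, the only new feature being that the amalgamated-sum structure must be carried along through the localization. First I would recall that, the stalk being a direct limit, $\widetilde{M}_p = \varinjlim_{U\ni p}\widetilde{M}(U)$ over open neighborhoods $U$ of $p$. Property i of Definition \ref{sheafdef} says that every section $s\in\widetilde{M}(U)$ satisfies $s(p)\in M\oplus_{\alpha^{-1}(A_p^*)}A_p^*$, and this value is unchanged under restriction to smaller neighborhoods, so evaluation at $p$ descends to a homomorphism of monoids $\psi:\widetilde{M}_p\rightarrow M\oplus_{\alpha^{-1}(A_p^*)}A_p^*$. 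It then remains to show that $\psi$ is bijective.

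For surjectivity, take an element $(m,u)$ with $m\in M$ and $u\in A_p^*$, and write $u=a/s$ with $a,s\notin p$. On the basic open set $V=D(as)\ni p$ we have $\Oc_X(V)=A_{as}$ and $u\in A_{as}^*$, so $t:=(m,u)$ is an element of $M\oplus_{\alpha^{-1}(\Oc_X(V)^*)}\Oc_X(V)^*$ and by property ii it defines a section over $V$ whose germ at $p$ is carried by $\psi$ to $(m,u)$. This part is routine.

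The main work, and where I expect the real obstacle, is injectivity. Suppose two germs have equal image under $\psi$; represent them on a common basic open $D(f)\ni p$ by elements $t=(m,u)$ and $t'=(m',u')$ with $u,u'\in A_f^*$. Since $A_p^*$ is a group, the Proposition describing the amalgamated sum applies, so $\psi(t)=\psi(t')$ means there exist $r,r'\in\alpha^{-1}(A_p^*)\subseteq M$ with $m+r=m'+r'$ in $M$ and $u/\alpha(r)=u'/\alpha(r')$ in $A_p^*$. The monoid equation $m+r=m'+r'$ already holds globally in $M$. The ring equation is an equality in $A_p$ between elements coming from $A_f$; since $A_p=\varinjlim_{g\notin p}A_g$, it must already hold in $A_g$ for some $g\notin p$. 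Moreover $\alpha(r),\alpha(r')\notin p$, as they are units in $A_p$. Setting $W=D(fg\,\alpha(r)\alpha(r'))\ni p$, both $\alpha(r)$ and $\alpha(r')$ become units in $\Oc_X(W)$ and the ring equation holds in $\Oc_X(W)$, so $r,r'\in\alpha^{-1}(\Oc_X(W)^*)$ witness $(m,u)=(m',u')$ in $M\oplus_{\alpha^{-1}(\Oc_X(W)^*)}\Oc_X(W)^*$ via the same Proposition. Hence $t$ and $t'$ restrict to the same section over $W$, and their germs at $p$ coincide.

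The crux is thus the familiar direct-limit finiteness argument, complicated here by the amalgamated-sum relation: the relation witnessing $\psi(t)=\psi(t')$ a priori lives only over the local ring $A_p$, and one must clear the finitely many denominators $\alpha(r),\alpha(r')$ it introduces, as well as descend a single ring equality out of $A_p$, in order to realize that relation over one honest basic open neighborhood of $p$. Everything else is bookkeeping against Proposition \ref{existslog} and the explicit form of the amalgamated sum.
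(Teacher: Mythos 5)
Your proof is correct and follows essentially the same route as the paper: your surjectivity construction is exactly the paper's map $M\oplus_{\alpha^{-1}(A_p^*)}A_p^*\rightarrow\widetilde{M}_p$ obtained by clearing the denominator of $u$, and your evaluation map $\psi$ is the paper's inverse map obtained from the universal property applied to $M\oplus_{\alpha^{-1}(A_g^*)}A_g^*\rightarrow M\oplus_{\alpha^{-1}(A_p^*)}A_p^*$. The only difference is that you spell out the injectivity of $\psi$ (descending the amalgamated-sum relation from $A_p$ to a basic open neighborhood), a verification the paper compresses into the assertion that the two morphisms are mutually inverse.
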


\begin{proof}
Let $(m,u)\in M\oplus_{\alpha^{-1}(A_p^*)}A_p^*$, then there exists $a,b\in A$ such that $u = a/b$ and $a,b\notin p$. Therefore $u=a/b$ is a unit on the open set $D(ab)$ and $(m,u)\in M\oplus_{\alpha^{-1}(A_{ab}^*)}A_{ab}^*$. Since $(m,u)$ was arbitrary this construction defines a morphism
\[
M\oplus_{\alpha^{-1}(A_p^*)}A_p^*\rightarrow \widetilde{M}_p.
\]
On the other hand let $s\in\widetilde{M}_p$, then by definition $s\in \widetilde{M}(D(g))$ for some $g\notin p$. By the universal property of amalgamated sums there exists a unique morphism
\[
M\oplus_{\alpha^{-1}(A_g^*)}A_g^*\rightarrow M\oplus_{\alpha^{-1}(A_p^*)}A_p^*.
\]
Therefore $s$ determines an element of $M\oplus_{\alpha^{-1}(A_p^*)}A_p^*$.  Since $s$ was arbitrary this construction defines a morphism $\widetilde M_p\rightarrow M\oplus_{\alpha^{-1}(A_p^*)}A_p^*$,
which is inverse to the previously constructed morphism.
\end{proof}

\section{Log Derivations}

Before defining higher log derivations, it is worthwhile to review the definition of log derivations.

\begin{mydef}
Let $B$ be a log algebra over $A$ with morphism $g_B:A\rightarrow B$,
and $E$ a $B$-module. Then a \textit{log derivation} from $B$ to $E$ over $A$ is a pair $(D,\delta)$ satisfying the following
\begin{enumerate}
\item[i.] $D:B\rightarrow E$ is a morphism of $A$-modules.

\item[ii.] $D(xy) = xD(y) + yD(x)$ for all $x,y\in B$.

\item[iii.] $\delta: M_B\rightarrow E$ is a morphism of monoids satisfying $D(\alpha_B(m)) = \alpha_B(m) \delta(m)$ for all $m\in M_B$.
\item[iv.] $\delta(g_B^\flat(a)) =0$ for all $a\in A$.
\end{enumerate}
Denote by $\Der_{B/A}(E)$ the set of all log derivations from $B$ to $E$ over $A$.
\end{mydef}

There will not be many times that we need to directly deal with log derivations. However they will naturally appear as a part of higher log derivations. As such the proof of the following theorem is omitted.

\begin{thm}\label{existsuniv}
Let $B$ be a log algebra over $A$ with morphism $g_B:A\rightarrow B$. Then the functor of $B$-modules $E\mapsto \Der_{B/A}(E)$ is representable by a universal object $\Omega_{B/A}(M_B/M_A)$ with morphisms $d:B\rightarrow \Omega_{B/A}(M_B/M_A)$ and $\partial: M_B\rightarrow \Omega_{B/A}(M_B/M_A)$.
\end{thm}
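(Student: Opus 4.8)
The plan is to construct $\Omega_{B/A}(M_B/M_A)$ directly by generators and relations, mimicking the usual construction of Kähler differentials but carrying the monoid map $\partial$ together with $d$. First I would form the free $B$-module $F$ on the disjoint family of formal symbols $\{db:b\in B\}\sqcup\{\partial m:m\in M_B\}$, and let $N\subseteq F$ be the $B$-submodule generated by the elements
\[
\begin{gathered}
d(b_1+b_2)-db_1-db_2,\qquad d(b_1b_2)-b_1\,db_2-b_2\,db_1,\qquad d(g_B(a)),\\
\partial(m_1+m_2)-\partial m_1-\partial m_2,\qquad d(\alpha_B(m))-\alpha_B(m)\,\partial m,\qquad \partial(g_B^\flat(n)),
\end{gathered}
\]
ranging over all $b,b_1,b_2\in B$, $a\in A$, $m,m_1,m_2\in M_B$ and $n\in M_A$. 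I would then set $\Omega_{B/A}(M_B/M_A):=F/N$ and take $d$ and $\partial$ to be the symbol maps composed with the quotient projection.

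The first thing to check is that the pair $(d,\partial)$ so defined is itself a log derivation over $A$. The first relation makes $d$ additive, and together with the Leibniz relation and the relation $d(g_B(a))=0$ it forces $A$-linearity: from $d(g_B(a)b)=g_B(a)\,db+b\,d(g_B(a))$ one recovers axiom i once $d\circ g_B=0$. The Leibniz relation is axiom ii, the additivity of $\partial$ gives that $\partial$ is a monoid morphism into $(\Omega_{B/A}(M_B/M_A),+)$ (note $\partial(e_{M_B})=0$ follows automatically), the relation $d(\alpha_B(m))=\alpha_B(m)\,\partial m$ is axiom iii, and $\partial\circ g_B^\flat=0$ is axiom iv. Thus $(d,\partial)\in\Der_{B/A}(\Omega_{B/A}(M_B/M_A))$.

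For universality, given any $B$-module $E$ and any log derivation $(D,\delta)\in\Der_{B/A}(E)$, I would use freeness of $F$ to define a $B$-linear map $\tilde\phi:F\to E$ by $\tilde\phi(db)=D(b)$ and $\tilde\phi(\partial m)=\delta(m)$. The point is that $\tilde\phi$ annihilates each generator of $N$: the additivity, Leibniz, and $d(g_B(a))$ generators vanish by axioms i and ii, the two $\partial$-relations vanish by axioms iii and iv, and the mixed relation vanishes by axiom iii. Hence $\tilde\phi$ descends to a $B$-linear map $\phi:\Omega_{B/A}(M_B/M_A)\to E$ with $\phi\circ d=D$ and $\phi\circ\partial=\delta$. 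Uniqueness is immediate because the images of the $db$ and $\partial m$ generate the quotient as a $B$-module, and naturality in $E$ is clear, so the functor $E\mapsto\Der_{B/A}(E)$ is represented by $\Omega_{B/A}(M_B/M_A)$ via $\phi\mapsto(\phi\circ d,\phi\circ\partial)$.

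The construction is entirely formal, so there is no serious obstacle; the only place demanding care is pinning down the exact list of relations. In particular one must notice that $A$-linearity of $d$ should not be imposed as an independent family of relations but rather recovered from additivity, the Leibniz rule, and $d\circ g_B=0$, and one must keep straight that $\partial$ is required to kill only the image $g_B^\flat(M_A)$ as in axiom iv. Once the relations are chosen correctly, verifying that $\tilde\phi$ kills $N$ is a line-by-line translation of the four axioms.
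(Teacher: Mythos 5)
Your construction is correct, and it is worth noting that the paper itself does not prove this statement at all --- it simply cites Proposition IV.1.1.6 of Ogus, so your argument is a genuinely self-contained alternative. Your presentation by generators and relations is the standard one, and every step checks out: the six families of relations exactly transcribe the four axioms of a log derivation once one observes, as you do, that $A$-linearity of $d$ is a consequence of additivity, the Leibniz relation, and $d\circ g_B=0$ (via $d(g_B(a)b)=g_B(a)\,db+b\,d(g_B(a))$, together with $d(1)=0$ forced by Leibniz), and that $\partial(e_{M_B})=0$ follows from additivity of $\partial$. The verification that $\tilde\phi$ annihilates $N$ is indeed a line-by-line translation of axioms i--iv, and uniqueness follows since the classes of the symbols $db$ and $\partial m$ generate the quotient over $B$. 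For comparison, the construction in Ogus proceeds differently in one respect: there one forms the quotient of $\Omega_{B/A}\oplus\bigl(B\otimes_{\Z}M_B^{\mathrm{gp}}\bigr)$ by the relations identifying $d\alpha_B(m)$ with $\alpha_B(m)\otimes m$ and killing $1\otimes g_B^\flat(n)$, which builds on the already-known universal property of ordinary K\"ahler differentials and passes through the group completion of $M_B$. That route is shorter if one takes $\Omega_{B/A}$ as given; yours avoids groupification entirely (harmless here, since any monoid map into the additive group of $E$ factors through $M_B^{\mathrm{gp}}$ anyway) and makes the universal property transparent from scratch. Both yield the same universal object.
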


\begin{proof}
See \cite{ogus06} Proposition IV.1.1.6.
\end{proof}

There are of course many ways such a definition can be generalized. The goal here is to develop the notion of log jet spaces as used by Noguchi and Winkelmann in \cite{nogu1} in the style and rigor of Vojta in \cite{vojt1}. Therefore we make the following definition.

\begin{mydef}\label{def:logder}
Let $B$ and $R$ be log algebras over $A$ with morphisms
$g_B:A\rightarrow B$ and $g_R:A\rightarrow R$, then a \textit{higher
log derivation of order n} from $B$ to $R$ over $A$ is a
$(2n+2)$-tuple $(D_0, \delta_0, \ldots, D_n, \delta_n)$ such that
\begin{enumerate}
\item[i.] $D_0: B\rightarrow R$ is a homomorphism of log algebras over $A$.

\item[ii.] $D_k: B\rightarrow R$ are $A$-module homomorphisms satisfying the divided Leibniz Identity
\[
D_k(x\cdot y) = \sum_{i+j=k} D_i(x)D_j(y)
\]
for all $x,y \in B$ and all $k = 0,\ldots, n$.

\item[iii.] $\delta_k:M_B\rightarrow R$ are maps satisfying
\[
D_k(\alpha_B(m)) = D_0(\alpha_B(m))\delta_k(m)
\]
for all $m\in M_B$ and all $k = 0, \ldots, n$.

\item[iv.] $\delta_0 = 1$ as a function and
\[
\delta_k(m+p) = \sum_{i+j=k} \delta_i(m)\delta_j(p)
\]
for all $m, p\in M_B$ and all $k = 0, \ldots, n$.

\item[v.] $\delta_k(g_B^\flat(m)) = D_k(g_B(a)) = 0$ for all $m\in
M_A, a\in A$ and all $k=1,\ldots, n$.
\end{enumerate}
Denote by $\Der^n_A(B, R)$ the set of all higher log derivations of order $n$ over $A$.
\end{mydef}

\begin{rem}
Condition iv above is not necessary if $\alpha_R(M_R)$ contains no
zero divisors since $D_0(\alpha_B(m+p))\in \alpha_R(M_R)$ for all
$m,p \in M_B$ and
\[
\begin{split}
D_0(\alpha_B(m+p))\delta_k(m+p) &= D_k(\alpha_B(m+p))\\
&= D_k(\alpha_B(m)\alpha_B(p))\\
&= \sum_{i+j=k}D_i(\alpha_B(m))D_j(\alpha_B(p))\\
&=
\sum_{i+j=k}D_0(\alpha_B(m))\delta_i(m)D_0(\alpha_B(p))\delta_j(p)\\
&= D_0(\alpha_B(m+p))\sum_{i+j=k}\delta_i(m)\delta_j(p)
\end{split}
\]
by properties i, ii and iii. Additionally $D_k(g_B(a)) = 0$ in v is
not necessary as it follows from ii.
\end{rem}

\begin{rem}\label{rem:oddlog}Condition iii may seem unnatural for $k>1$, but later it will be demonstrated that the differentials one would expect via application of the product rule to first order log differentials can be recovered over fields of characteristic $0$. See Theorem \ref{thm:gendiff} and Example \ref{ex:gendifff} for details. As seen in \cite{vojt1} the appropriate definition for higher log derivations in constructing jet spaces uses divided differentials. If the chain rule from calculus were carried over and applied directly to divided differentials, the relation would be
\[
\alpha_B(2m)\delta_2(m) = \alpha_B(m)D_2(\alpha_B(m))-\frac{1}{2}D_1(\alpha_B(m))^2,
\]
which is not well defined in characteristic $2$.
\end{rem}

\begin{mydef}
Let $R$ be a log algebra over $A$ and define the log structure on
the ring $R[t]/t^{n+1}$ to be
\[
\widehat{M}^n_R = M_R\times R^n
\]
as a set. Define a monoid law on $\widehat M_R^n$ by
\[
(m,r_1,\ldots,r_n)+(p, q_1,\ldots, q_n) = \left(m+p, \sum_{i+j=1}
r_i q_j,\ldots, \sum_{i+j=n} r_iq_j\right),
\]
where $r_0 = q_0 = 1$ and the sums and products are taken in their
respective monoid or ring. It can easily be verified that
$\widehat{M}^n_R$ is in fact a monoid. Define
$\widehat{\alpha}^n_R:\widehat{M}^n_R\rightarrow R[t]/t^{n+1}$ by
\[
(m,r_1,\ldots,r_n) \mapsto \alpha_R(m)(1+r_1t+\cdots+r_nt^n),
\]
where $R[t]/t^{n+1}$ is viewed as an $R$-module. Finally
define $g_{R[t]/t^{n+1}}^\flat:M_A\rightarrow \widehat{M}^n_R$ by
\[
m \mapsto (g_R^\flat(m),0,\ldots,0),
\]
which along with the composition $g_{R[t]/t^{n+1}}:A\rightarrow
R\rightarrow R[t]/t^{n+1}$ turns $R[t]/t^{n+1}$ into a log algebra
over $A$.
\end{mydef}

\begin{rem}
$\widehat{M}^n_R$ is in fact just $M_R\oplus_{R^*} \left(R[t]/t^{n+1}\right)^*$, but
it will be useful at times to be able to explicitly write down the monoid law.
\end{rem}

\begin{thm}\label{thm:rep}
Let $B$ and $R$ be log algebras over $A$. For each
$(D_0,\delta_0,\ldots, D_n,\delta_n)\in \Der^n_A(B,R)$
define $\phi: B\rightarrow R[t]/t^{n+1}$ by
\[
b\mapsto D_0(b)+D_1(b)t+\cdots+D_n(b)t^n
\]
and $\phi^\flat: M_B\rightarrow \widehat{M}_R^n$ by
\[
m\mapsto (D_0^\flat(m), \delta_1(m),\ldots, \delta_n(m)).
\]
Then $\phi$ is well defined and lies in
$\Hom_A(B,R[t]/t^{n+1})$ in the category of log algebras
over $A$. Moreover the resulting map
\[
\Der^n_A(B,R)\rightarrow\Hom_A(B,R[t]/t^{n+1})
\]
is a bijection.
\end{thm}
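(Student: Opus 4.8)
The plan is to recognize both assignments as coefficient-extraction operations and to check that the five conditions defining a higher log derivation match, term by term, the conditions for $(\phi,\phi^\flat)$ to be a morphism of log algebras over $A$. First I would verify that the forward assignment lands in $\Hom_A(B,R[t]/t^{n+1})$; then I would produce the obvious inverse by reading coefficients off; finally, mutual inverseness will be immediate from the constructions.

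For the forward direction I would check in turn that $\phi$ is a ring homomorphism, that $\phi^\flat$ is a monoid homomorphism into $\widehat{M}_R^n$, and that the two are compatible with the log and $A$-structures. Multiplicativity of $\phi$ is exactly the divided Leibniz identity (condition ii) after comparing the $t^k$-coefficients of $\phi(xy)$ and $\phi(x)\phi(y)$ modulo $t^{n+1}$; the identity $\phi(1)=1$ and $A$-linearity come from condition v together with condition i for $D_0$. That $\phi^\flat$ respects the monoid law of $\widehat{M}_R^n$ is precisely condition iv, since the $k$-th coordinate of $(D_0^\flat(m),\delta_1(m),\ldots)+(D_0^\flat(p),\delta_1(p),\ldots)$ is $\sum_{i+j=k}\delta_i(m)\delta_j(p)$ with $\delta_0=1$. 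For log compatibility I would compute $\phi(\alpha_B(m))=\sum_k D_0(\alpha_B(m))\delta_k(m)t^k = D_0(\alpha_B(m))(1+\delta_1(m)t+\cdots+\delta_n(m)t^n)$ using condition iii, and observe this equals $\widehat{\alpha}_R^n(\phi^\flat(m))$ because $D_0$ is a morphism of log algebras, so $\alpha_R(D_0^\flat(m))=D_0(\alpha_B(m))$. The over-$A$ conditions follow from condition v and the fact that $D_0$ is already a morphism over $A$.

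For the inverse I would, given $(\phi,\phi^\flat)$, set $D_k$ to be the $t^k$-coefficient of $\phi$ and read the components of $\phi^\flat(m)=(D_0^\flat(m),\delta_1(m),\ldots,\delta_n(m))$ off, with $\delta_0:=1$. Each of the five conditions is then recovered by reversing the computations above: condition i holds because composing $\phi$ and $\phi^\flat$ with evaluation at $t=0$ (the projections $R[t]/t^{n+1}\to R$ and $\widehat{M}_R^n\to M_R$) yields a morphism of log algebras over $A$; condition ii is additivity and multiplicativity of $\phi$ read coefficientwise; condition iii together with the normalization $\delta_0=1$ comes from expanding $\widehat{\alpha}_R^n(\phi^\flat(m))$; condition iv is the monoid law of $\widehat{M}_R^n$; and condition v reflects that $\phi$ and $\phi^\flat$ send the image of $A$ (respectively of $g_B^\flat(M_A)$) into the constant part. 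Since both assignments simply package or unpackage the same coefficients, they are visibly mutually inverse, giving the bijection.

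The routine computations are all straightforward; the one point that needs genuine care is the log-structure compatibility, where one must confirm that the explicit monoid law on $\widehat{M}_R^n$ (matching condition iv) and the twisting factor $1+\delta_1(m)t+\cdots+\delta_n(m)t^n$ in $\widehat{\alpha}_R^n$ (matching condition iii) are precisely engineered so that $\phi\circ\alpha_B=\widehat{\alpha}_R^n\circ\phi^\flat$ holds. This is where the correspondence would fail were the definitions of $\Der^n_A(B,R)$ and $\widehat{M}_R^n$ not aligned, and so it is the heart of the verification.
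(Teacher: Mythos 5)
Your proposal is correct and follows essentially the same route as the paper's proof: verify multiplicativity of $\phi$ via the divided Leibniz identity, additivity of $\phi^\flat$ via the monoid law on $\widehat{M}_R^n$, the compatibility $\phi\circ\alpha_B=\widehat{\alpha}_R^n\circ\phi^\flat$ via condition iii and the fact that $D_0$ is a log morphism, the over-$A$ conditions via condition v, and then invert by reading off coefficients. You also correctly single out the log-structure compatibility as the step where the definitions of $\Der^n_A(B,R)$ and $\widehat{M}_R^n$ must align, which is exactly the computation the paper spells out in detail.
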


\begin{proof}
Let $x,y\in B$ and $\phi$ be as above. Then
\[
\begin{split}
\phi(xy) &= D_0(xy)+D_1(xy)t+\cdots+D_n(xy)t^n\\
&=
D_0(x)D_0(y)+\sum_{i+j=1}D_i(x)D_j(y)t+\cdots+\sum_{i+j=n}D_i(x)D_j(y)t^n\\
&=
\left(D_0(x)+D_1(x)t+\cdots+D_n(x)t^n\right)\left(D_0(y)+D_1(y)t+\cdots+D_n(y)t^n\right)\\
&= \phi(x)\phi(y)
\end{split}
\]
and for $a\in A$
\[
\begin{split}
\phi(g_B(a)) &= D_0(g_B(a))+D_1(g_B(a))t+\cdots+D_n(g_B(a))t^n\\
&= D_0(g_B(a))\\
&= g_{R[t]/t^{n+1}}(a).
\end{split}
\]
Since the $D_i$ are $A$-linear $\phi$ is also $A$-linear and by the
above it is in $\Hom_A(B,R[t]/t^{n+1})$ taken in the
category of $A$ algebras.

Similarly let $m,p\in M_B$ and $\phi^\flat$ as above. Then
\[
\begin{split}
\phi^\flat(m+p) &= \left(D_0^\flat(m+p),
\delta_1(m+p),\ldots,\delta_n(m+p)\right)\\
&= \left(D_0^\flat(m)+D_0^\flat(p),
\sum_{i+j=1}\delta_i(m)\delta_j(p), \ldots,
\sum_{i+j=n}\delta_i(m)\delta_j(p)\right)\\
&=
\left(D_0^\flat(m),\delta_1(m),\ldots,\delta_n(m)\right)+\left(D_0^\flat(p),\delta_1(p),\ldots,\delta_n(p)\right)\\
&= \phi^\flat(m)+\phi^\flat(p)
\end{split}
\]
and for $m\in M_A$
\[
\begin{split}
\phi^\flat(g_B^\flat(m)) &= (D_0^\flat(g_B^\flat(m)), 0,\ldots,0)\\
&= (g_R^\flat(m),0,\ldots,0)\\
&= g_{R[t]/t^{n+1}}^\flat(m),
\end{split}
\]
so $\phi^\flat$ is a morphism of $M_A$-monoids.

Let $m\in M_B$. Then
\[
\begin{split}
\widehat{\alpha}_R^n(\phi^\flat(m)) &=
\widehat{\alpha}_R^n(D_0^\flat(m), \delta_1(m),\ldots,
\delta_n(m))\\
&=
\alpha_R(D_0^\flat(m))(1+\delta_1(m)t+\cdots+\delta_n(m)t^n)\\
&=
D_0(\alpha_B(m))(1+\delta_1(m)t+\cdots+\delta_n(m)t^n)\\
&= D_0(\alpha_B(m))+D_0(\alpha_B(m))\delta_1(m)t+\cdots+D_0(\alpha_B(m))\delta_n(m)t^n)\\
&= D_0(\alpha_B(m))+D_1(\alpha_B(m))t+\cdots+D_n(\alpha_B(m))t^n\\
&= \phi(\alpha_B(m))
\end{split}
\]
so $(\phi,\phi^\flat)$ is a log morphism over $A$.

Since the definition of higher log derivations requires $\delta_0$ to be identically $1$, the map $\Der_A^n(B,R)\rightarrow \Hom_A(B,R[t]/t^{n+1})$ is injective. This construction is reversible and an argument almost identical to the above proves that the $(D_0,\delta_0,\ldots, D_n,\delta_n)$ arising from a pair $(\phi,\phi^\flat)$ is a higher log derivation of order $n$. Therefore the map $\Der_A^n(B,R)\rightarrow \Hom_A(B,R[t]/t^{n+1})$ is a bijection.
\end{proof}

\begin{mydef} \label{def:hsdef}
Let $g_B:A\rightarrow B$ be a morphism of log algebras, and define the \textit{log Hasse-Schmidt ring of order n}, denoted $\HS^n_{B/A}(M_B/M_A)$, to be the log algebra over $B$
\[
B[b^{(i)}, m^{(i)}]_{b\in B; m\in M_B; i=1,\ldots,n}/I^n_{B/A},
\]
where $I^n_{B/A}$ is the ideal generated by the union of the sets
\begin{equation}
\{g_B(a)^{(i)} : a\in A; i=1,\ldots,n\}\textrm{,}
\end{equation}
\begin{equation}
\left\{g_B^\flat(m)^{(i)}: m\in M_A; i=1,\ldots,n\right\}\textrm{,}
\end{equation}
\begin{equation}\label{eq:sum}
\left\{(x+y)^{(i)}-x^{(i)}-y^{(i)}: x,y \in B;
i=1,\ldots,n\right\}\textrm{,}
\end{equation}
\begin{equation} \label{eq:product}
\left\{(xy)^{(i)}-\sum_{j+k=i}x^{(j)}y^{(k)}: x,y\in B;
i=1,\ldots,n\right\}\textrm{,}
\end{equation}
\begin{equation}\label{eq:monoidprod}
\left\{(m+p)^{(i)}-\sum_{j+k=i}m^{(j)}p^{(k)}: m,p\in M_b;
i=1,\ldots,n\right\}\textrm{,}
\end{equation}
\begin{equation}
\left\{\alpha(m)^{(i)}-\alpha(m)^{(0)}m^{(i)}:
m\in M_b, i=1,\ldots,n\right\}
\end{equation}
and $b^{(0)}$ is defined to be $b$ for all $b\in B$ and $m^{(0)}$ is
defined to be $1$ for all $m\in M_B$.
Define the  log structure $M^n_{B/A}$ to be the amalgamated sum
$M_B\oplus_{B^*}\HS^{n*}_{B/A}(M_B/M_A)$ along with the morphism
$\alpha^n_{B/A}:M^n_{B/A} \rightarrow \HS^n_{B/A}(M_B/M_A)$ given by
$(m,x)\mapsto \alpha_B(m)x$ where the obvious inclusion maps have
been omitted from notation.

Finally define the \textit{universal derivation}
$(d_0,\partial_0,\ldots,d_n,\partial_n)$ from $B$ to
$\HS^n_{B/A}(M_B/M_A)$ given by the maps $d_i(b) = b^{(i)} \pmod{I}$
and $\partial_i(m) = m^{(i)}\pmod{I}$, and $d_0^\flat(m) = (m,1)$
for all $b\in B, m\in M_B$ and  $i=0,\ldots n$.
\end{mydef}

\begin{rem}
The ring $B[b^{(i)}, m^{(i)}]_{b\in B; m\in M_B; i=1,\ldots,n}$ carries a natural grading where $b^{(i)}$ and $m^{(i)}$ each have weight $i$. The ideal $I^n_{B/A}$ is homogeneous, so $\HS^n_{B/A}(M_B/M_A)$ also has a natural grading.
\end{rem}

\begin{mydef}
For notational reasons define $\HS^n_{B/A} := \HS^n_{B/A}(B^*/A^*)$. This agrees with the definition of $\HS^n_{B/A}$ given in \cite{vojt1}.
\end{mydef}

\begin{prop}
Let $f:B\rightarrow C$ be a morphism of log algebras over $A$. Then the morphism
\[
B[b^{(i)}, m^{(i)}]_{b\in B; m\in M_B; i=1,\ldots,n}\rightarrow C[c^{(i)}, p^{(i)}]_{c\in C; p\in M_C; i=1,\ldots, n}
\]
given by $b^{(i)}\mapsto f(b)^{(i)}$, $m^{(i)}\mapsto f^{\flat}(m)^{(i)}$ induces a morphism of log algebras over $B$, $\HS^n_{B/A}(M_B/M_A)\rightarrow \HS^n_{C/A}(M_C/M_A)$, which by abuse of notation will be denoted $f$ as well.
\end{prop}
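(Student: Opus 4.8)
The plan is to work with the free polynomial presentations and then descend. First I would observe that the map in the statement is a genuine ring homomorphism $\Phi$: its source is a polynomial ring over $B$, so $\Phi$ is determined freely by its value $f$ on the coefficient ring $B$ (followed by the inclusion $C\hookrightarrow C[c^{(i)},p^{(i)}]$) together with the assignments $b^{(i)}\mapsto f(b)^{(i)}$ and $m^{(i)}\mapsto f^\flat(m)^{(i)}$. These are consistent with the conventions $b^{(0)}=b$ and $m^{(0)}=1$ from Definition \ref{def:hsdef}, since $f(b)^{(0)}=f(b)$ and $f^\flat(m)^{(0)}=1$. Everything then reduces to showing $\Phi(I^n_{B/A})\subseteq I^n_{C/A}$, so that $\Phi$ descends to a ring map $\bar\Phi\colon \HS^n_{B/A}(M_B/M_A)\to \HS^n_{C/A}(M_C/M_A)$.

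Second, I would verify this inclusion generator by generator, matching each of the six families in Definition \ref{def:hsdef}. Here is where the hypotheses on $f$ enter: because $f$ is a morphism of log algebras over $A$ we have $f\circ g_B=g_C$, $f^\flat\circ g_B^\flat=g_C^\flat$, and $f\circ\alpha_B=\alpha_C\circ f^\flat$, while $f$ and $f^\flat$ are themselves a ring and a monoid homomorphism. Thus $\Phi(g_B(a)^{(i)})=g_C(a)^{(i)}$ and $\Phi(g_B^\flat(m)^{(i)})=g_C^\flat(m)^{(i)}$; the additive and multiplicative relations for $x,y\in B$ map to the corresponding relations for $f(x),f(y)\in C$ by additivity and multiplicativity of $f$; the monoid relation for $m,p\in M_B$ maps to that for $f^\flat(m),f^\flat(p)\in M_C$ by additivity of $f^\flat$; and finally $\Phi(\alpha_B(m)^{(i)}-\alpha_B(m)^{(0)}m^{(i)})=\alpha_C(f^\flat(m))^{(i)}-\alpha_C(f^\flat(m))^{(0)}f^\flat(m)^{(i)}$ by the log compatibility $f\circ\alpha_B=\alpha_C\circ f^\flat$, which again lies in $I^n_{C/A}$. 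Hence $\Phi$ descends; and since $\Phi|_B$ is just $f$ followed by the structure inclusion, $\bar\Phi$ is a homomorphism of algebras over $B$.

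Third, I would produce the accompanying morphism on log structures. The target carries $M^n_{C/A}=M_C\oplus_{C^*}\HS^{n*}_{C/A}(M_C/M_A)$. Since $\bar\Phi$ is a ring map it carries units to units, giving a group homomorphism $\HS^{n*}_{B/A}(M_B/M_A)\to\HS^{n*}_{C/A}(M_C/M_A)$, and $f^\flat\colon M_B\to M_C$ is already given; composing each with the canonical inclusions into $M^n_{C/A}$ gives two monoid maps out of $M_B$ and $\HS^{n*}_{B/A}(M_B/M_A)$. To invoke the universal property of the amalgamated sum $M^n_{B/A}=M_B\oplus_{B^*}\HS^{n*}_{B/A}(M_B/M_A)$ I must check these agree on $B^*$: for $u\in B^*$, its lift in $M_B$ maps under $f^\flat$ to the lift of $f(u)$ in $M_C$ (its image has $\alpha_C$-value $f(u)\in C^*$), while on the unit side $u\mapsto f(u)\in C^*$, and these two elements are identified precisely by the amalgamation relation defining $M^n_{C/A}$. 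This yields $\bar\Phi^\flat\colon M^n_{B/A}\to M^n_{C/A}$, $(m,x)\mapsto(f^\flat(m),\bar\Phi(x))$. One then checks $(\bar\Phi,\bar\Phi^\flat)$ is a morphism of log algebras via $\bar\Phi\circ\alpha^n_{B/A}=\alpha^n_{C/A}\circ\bar\Phi^\flat$ on a generator $(m,x)$: the left side is $\bar\Phi(\alpha_B(m))\,\bar\Phi(x)$ and the right is $\alpha_C(f^\flat(m))\,\bar\Phi(x)$, and these coincide since $\bar\Phi(\alpha_B(m))=f(\alpha_B(m))=\alpha_C(f^\flat(m))$; compatibility over $B$ then follows because every structure map factors through $f$.

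The six generator checks, though numerous, are each an immediate consequence of $f$ being a morphism of log algebras over $A$, so the only place where genuine care is needed---and hence the main obstacle---is the amalgamated-sum step: one must confirm that the two monoid maps agree on $B^*$ and that units are sent to units, so that $\bar\Phi^\flat$ is well defined. Conceptually this whole morphism is the one classifying the higher log derivation $(d_i\circ f,\partial_i\circ f^\flat)$ of $B$ into $\HS^n_{C/A}(M_C/M_A)$, which is why its existence is to be expected.
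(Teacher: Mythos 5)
Your proposal is correct and follows essentially the same route as the paper's (much terser) proof: verify $f(I^n_{B/A})\subseteq I^n_{C/A}$ on the generating families, then obtain the map on log structures from the universal property of the amalgamated sum. You have simply filled in the generator-by-generator checks and the compatibility on $B^*$ that the paper leaves implicit.
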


\begin{proof}
The fact that $f(I^n_{B/A})\subset I^n_{C/A}$ follows immediately from the fact that $f$ is a homomorphism of log algebras over $A$. Therefore $f:\HS^n_{B/A}\rightarrow \HS^n_{C/A}$ is well defined as a map of $B$-algebras. The morphism on log structures is uniquely determined by the universal property of amalgamated sums.
\end{proof}

\begin{thm}[First Universal Property]\label{thm:func}
Let $B$ and $R$ be log algebras over $A$ and $n\in \mathbb{N}$. For
each higher log derivation of order $n$
$(D_0,\delta_0,\ldots,D_n,\delta_n)$ from $B$ to $R$ there exists a
unique homomorphism $\phi:
\HS^n_{B/A}(M_B/M_A)\rightarrow R$ of log algebras over $A$ such that
\[
(D_0,\delta_0,\ldots,D_n,\delta_n) = (\phi\circ
d_0,\phi\circ\partial_0,\ldots,\phi\circ d_n,\phi\circ\partial_n).
\]
Moreover every higher log derivation of order $n$ arises in this way.
\end{thm}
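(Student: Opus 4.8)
The plan is to define $\phi$ on generators and check that it annihilates the ideal $I^n_{B/A}$, exploiting the fact that each family of generators of $I^n_{B/A}$ was chosen to encode exactly one axiom of a higher log derivation. Concretely, I start from the polynomial ring $B[b^{(i)},m^{(i)}]$ and send $B$ to $R$ via the log algebra homomorphism $D_0$, then set $b^{(i)}\mapsto D_i(b)$ and $m^{(i)}\mapsto \delta_i(m)$ for $i=1,\dots,n$ (consistent with $b^{(0)}=b$ and with $m^{(0)}=1$, since $\delta_0=1$). I then verify that this ring homomorphism kills each generator of $I^n_{B/A}$: the generators $g_B(a)^{(i)}$ and $g_B^\flat(m)^{(i)}$ map to $0$ by axiom v; the relations in (\ref{eq:sum}) vanish by additivity of the $A$-module maps $D_i$; the relations in (\ref{eq:product}) vanish by the divided Leibniz identity (axiom ii); the relations in (\ref{eq:monoidprod}) vanish by axiom iv; and the relations $\alpha(m)^{(i)}-\alpha(m)^{(0)}m^{(i)}$ vanish by axiom iii. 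Hence the map descends to a $B$-algebra homomorphism $\phi\colon \HS^n_{B/A}(M_B/M_A)\to R$, and by construction $\phi\circ d_i=D_i$ and $\phi\circ\partial_i=\delta_i$ at the ring level for every $i$.

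Next I upgrade $\phi$ to a morphism of log algebras over $A$ by producing a monoid map $\phi^\flat\colon M^n_{B/A}=M_B\oplus_{B^*}\HS^{n*}_{B/A}\to M_R$. Since $\phi$ is a ring homomorphism it carries the units $\HS^{n*}_{B/A}$ into $R^*$, so composing with the canonical isomorphism $R^*\cong\alpha_R^{-1}(R^*)\subset M_R$ gives a monoid map out of the units; on $M_B$ I use $D_0^\flat$. The universal property of the amalgamated sum then produces $\phi^\flat$, provided these two maps agree on $B^*$. This agreement holds because $D_0$ is a morphism of log algebras: on units one has $\alpha_R\circ D_0^\flat=D_0\circ\alpha_B$, which forces $D_0^\flat(u)=\alpha_R^{-1}(D_0(u))=\alpha_R^{-1}(\phi(u))$. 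I then check the compatibility square $\alpha_R\circ\phi^\flat=\phi\circ\alpha^n_{B/A}$ by evaluating on a pair $(m,x)$: both sides equal $D_0(\alpha_B(m))\,\phi(x)$, using that $\alpha_R$ converts the monoid operation into multiplication. This makes $(\phi,\phi^\flat)$ a morphism of log algebras over $A$; in particular $\phi^\flat\circ d_0^\flat=D_0^\flat$ since $\phi^\flat(m,1)=D_0^\flat(m)$, so the full factorization $(D_0,\delta_0,\dots,D_n,\delta_n)=(\phi\circ d_0,\phi\circ\partial_0,\dots,\phi\circ d_n,\phi\circ\partial_n)$ holds.

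For uniqueness, I observe that the symbols $b^{(i)}$ and $m^{(i)}$ together with the image of $B$ generate $\HS^n_{B/A}(M_B/M_A)$ as a ring. Any factorizing $\phi'$ must satisfy $\phi'(b^{(i)})=\phi'(d_i(b))=D_i(b)$, $\phi'(m^{(i)})=\phi'(\partial_i(m))=\delta_i(m)$, and $\phi'|_B=D_0$, so $\phi'$ agrees with $\phi$ on a generating set; the monoid component is then forced by the universal property of the amalgamated sum. For the final assertion I run the construction backwards: given any morphism $\psi\colon \HS^n_{B/A}(M_B/M_A)\to R$ of log algebras over $A$, I set $D_i=\psi\circ d_i$ and $\delta_i=\psi\circ\partial_i$, and the relations generating $I^n_{B/A}$ translate directly back into axioms i--v, so the resulting tuple is a higher log derivation; the two assignments are visibly mutually inverse.

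I expect the only genuine obstacle to lie in the log-structure bookkeeping of the second paragraph, namely verifying that the two maps into $M_R$ agree on $B^*$ and that the square with $\alpha_R$ commutes, because the ring-level verification is a direct line-by-line match between the generators of $I^n_{B/A}$ and axioms i--v. Everything else is formal and driven by the universal property of the amalgamated sum.
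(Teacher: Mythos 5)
Your proposal is correct and follows essentially the same route as the paper's proof: define $\phi$ on the generators $b^{(i)}\mapsto D_i(b)$, $m^{(i)}\mapsto\delta_i(m)$, observe that the generators of $I^n_{B/A}$ are killed by the derivation axioms, obtain $\phi^\flat$ from the universal property of the amalgamated sum via $\phi^\flat(m,p)=D_0^\flat(m)+\alpha_R^{-1}(\phi(p))$, and reverse the construction for the converse. You simply spell out in more detail the generator-by-generator verification and the agreement on $B^*$ that the paper leaves implicit.
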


\begin{proof}
Define $\phi:\HS^n_{B/A}(M_B/M_A)\rightarrow R$ by $\phi(b^{(i)}) =
D_i(b)$ for all $b\in B$ and $\phi(m^{(i)}) = \delta_i(m)$. The
morphism of monoids $\phi^\flat$, is defined to be the unique
morphism by the universal property of amalgamated sums, i.e.,
$\phi^\flat(m,p) = D_0^\flat(m)+\alpha_R^{-1}(\phi(p))$ which is
well defined by the definition of a log structure and the fact that
$p\in \HS^{n}_{B/A}(M_B/M_A)^*$ is a unit and thus $\phi(p)\in R^*$ is
as well. The properties of derivations imply that the kernel of
$\phi$ contains $I^n_{B/A}$. This map is unique since the value of
$\phi(b^{(i)}), \phi(m^{(i)})$ is uniquely determined by $D_i$ and
$\delta_i$.

On the other hand any morphism $\phi:\HS_{B/A}^n(M_B/M_A)\rightarrow R$ can be
pulled back by this same construction to a higher log derivation of order
$n$ from $B$ to $R$. Therefore we have a bijection
\[
\Der_A^n(B,R)\rightarrow \Hom_A(\HS_{B/A}^n(M_B/M_A), R).
\]
\end{proof}

\begin{cor}[Second Universal Property]\label{cor:functor}
Let $B$ and $R$ be log algebras over $A$ and let $n\in\mathbb{N}$. Then there is a natural bijection
\[
\Hom_A(\HS^n_{B/A}(M_B/M_A), R) \rightarrow
\Hom_A(B,R[t]/t^{n+1})
\]
which sends a morphism
$(\phi,\phi^\flat)\in\Hom_A(\HS^n_{B/A}(M_B/M_A), R)$ to the
morphism $B\rightarrow R[t]/t^{n+1}$ defined by
\begin{equation}\label{funceq}
\begin{split}
&x\mapsto \phi(d_0x)+\phi(d_1x)t+\cdots+\phi(d_nx)t^n\\
&m\mapsto (\phi^\flat(m),\phi(\partial_1 m),\ldots,\phi(\partial_n
m)).
\end{split}
\end{equation}
\end{cor}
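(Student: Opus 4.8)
The plan is to obtain the claimed bijection by composing the two bijections already in hand. Theorem \ref{thm:func} gives a bijection $\Der^n_A(B,R)\to\Hom_A(\HS^n_{B/A}(M_B/M_A),R)$, and Theorem \ref{thm:rep} gives a bijection $\Der^n_A(B,R)\to\Hom_A(B,R[t]/t^{n+1})$. Composing the inverse of the first with the second produces a bijection $\Hom_A(\HS^n_{B/A}(M_B/M_A),R)\to\Hom_A(B,R[t]/t^{n+1})$, and the substance of the argument is merely to check that this composite agrees with the explicit formula \eqref{funceq}.

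First I would take a morphism $(\phi,\phi^\flat)\in\Hom_A(\HS^n_{B/A}(M_B/M_A),R)$ and run the construction in the proof of Theorem \ref{thm:func} backwards to recover the higher log derivation $(D_0,\delta_0,\ldots,D_n,\delta_n)$ it corresponds to. By that proof one sets $D_i(x)=\phi(d_i x)$ and $\delta_i(m)=\phi(\partial_i m)$ for $i=0,\ldots,n$; the verification that these data satisfy the axioms of Definition \ref{def:logder} is precisely the content of Theorem \ref{thm:func}, so nothing needs to be re-checked here.

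Next I would feed this derivation into Theorem \ref{thm:rep}, which assigns to it the morphism $B\to R[t]/t^{n+1}$ given on the ring by $x\mapsto D_0(x)+D_1(x)t+\cdots+D_n(x)t^n$ and on the log structure by $m\mapsto(D_0^\flat(m),\delta_1(m),\ldots,\delta_n(m))$. Substituting $D_i(x)=\phi(d_i x)$ reproduces the first line of \eqref{funceq} verbatim. For the second line the only point needing care is the identification of the leading coordinate $D_0^\flat(m)$ with $\phi^\flat(m)$: since $d_0^\flat:M_B\to M^n_{B/A}$ is the map $m\mapsto(m,1)$ and $\phi^\flat(m,1)=D_0^\flat(m)$ by the description of $\phi^\flat$ in the proof of Theorem \ref{thm:func}, the two indeed coincide, so the monoid part is $m\mapsto(\phi^\flat(m),\phi(\partial_1 m),\ldots,\phi(\partial_n m))$ as claimed.

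Finally, since Theorems \ref{thm:rep} and \ref{thm:func} are both bijections, so is their composite, which establishes the statement. Naturality in $R$ follows formally from the naturality of the two constituent correspondences: a morphism $R\to R'$ of log algebras over $A$ post-composes with each $D_i$ and $\delta_i$, and both the passage to $\Hom_A(\HS^n_{B/A}(M_B/M_A),R)$ and the passage to $\Hom_A(B,R[t]/t^{n+1})$ commute with this post-composition. I expect the only genuinely non-formal step to be the bookkeeping match of the log-structure coordinate described above; every other part is a direct translation between the two universal properties.
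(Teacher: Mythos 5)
Your proposal is correct and follows exactly the paper's route: the paper's proof of this corollary is simply that it ``follows directly from Theorems \ref{thm:func} and \ref{thm:rep},'' i.e.\ the composition of those two bijections. Your additional bookkeeping (substituting $D_i(x)=\phi(d_i x)$, $\delta_i(m)=\phi(\partial_i m)$, and matching $D_0^\flat(m)$ with $\phi^\flat(m)$ via $d_0^\flat(m)=(m,1)$) merely makes explicit what the paper leaves implicit.
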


\begin{proof}
This follows directly from \ref{thm:func} and \ref{thm:rep}.
\end{proof}

It is now possible to connect the definition of log derivations to higher log derivations of order $1$.

\begin{prop}
Let $B$ be a log algebra over $A$. Then exists a natural isomorphism of log algebras
\[
\HS^1_{B/A}(M_B/M_A) \cong \Sym\Omega_{B/A}(M_B/M_A)
\]
where the log structure $M_\Omega$ on $\Sym\Omega_{B/A}(M_B/M_A)$ is given by
\[
M_\Omega := M_B\oplus_{B^*}\left(\Sym\Omega_{B/A}(M_B/M_A)\right)^*.
\]
\end{prop}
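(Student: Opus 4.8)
The plan is to exhibit the isomorphism explicitly on generators and to verify that it is well defined by appealing to the two universal properties already in hand: that of $\Omega_{B/A}(M_B/M_A)$ (Theorem \ref{existsuniv}) and that of $\HS^1_{B/A}(M_B/M_A)$ (Theorem \ref{thm:func}). First I would define a $B$-algebra homomorphism $\Phi\colon \HS^1_{B/A}(M_B/M_A)\to \Sym\Omega_{B/A}(M_B/M_A)$ on the polynomial generators by $b\mapsto b$, $b^{(1)}\mapsto db$ and $m^{(1)}\mapsto \partial m$, where $d$ and $\partial$ are the universal maps attached to $\Omega_{B/A}(M_B/M_A)$. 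The content here is that $\Phi$ kills the ideal $I^1_{B/A}$: running through the generating families in Definition \ref{def:hsdef} with $n=i=1$, the generators $g_B(a)^{(1)}$ and $g_B^\flat(m)^{(1)}$ map to $d\,g_B(a)=0$ and $\partial g_B^\flat(m)=0$ because $(d,\partial)$ is a log derivation over $A$; the additivity relations \ref{eq:sum} and \ref{eq:monoidprod} map to $d(x+y)-dx-dy=0$ and $\partial(m+p)-\partial m-\partial p=0$; the Leibniz relation \ref{eq:product} maps to $d(xy)-x\,dy-y\,dx=0$; and the family $\alpha(m)^{(1)}-\alpha(m)\,m^{(1)}$ maps to $d\alpha_B(m)-\alpha_B(m)\partial m=0$, which is exactly the defining compatibility between $d$ and $\partial$. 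Hence $\Phi$ descends to a morphism of $B$-algebras.

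For the inverse I would read off from the relations in $I^1_{B/A}$ that the pair $(d_1,\partial_1)$ is a log derivation from $B$ to $\HS^1_{B/A}(M_B/M_A)$ over $A$, with $\HS^1_{B/A}(M_B/M_A)$ viewed as a $B$-module through $d_0$: additivity of $d_1$ is relation \ref{eq:sum}; the divided Leibniz relation \ref{eq:product} supplies both the Leibniz rule and, together with $g_B(a)^{(1)}\in I^1_{B/A}$, the $A$-linearity of $d_1$; additivity of $\partial_1$ is \ref{eq:monoidprod}; the identity $d_1\alpha_B(m)=\alpha_B(m)\partial_1 m$ is the last family of generators; and $\partial_1 g_B^\flat(m)=0$ follows from $g_B^\flat(m)^{(1)}\in I^1_{B/A}$. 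By the universal property of $\Omega_{B/A}(M_B/M_A)$ this log derivation corresponds to a unique $B$-module map $\Omega_{B/A}(M_B/M_A)\to \HS^1_{B/A}(M_B/M_A)$, which extends by the universal property of the symmetric algebra to a $B$-algebra map $\Psi\colon \Sym\Omega_{B/A}(M_B/M_A)\to \HS^1_{B/A}(M_B/M_A)$ sending $db\mapsto b^{(1)}$ and $\partial m\mapsto m^{(1)}$. Comparing $\Phi$ and $\Psi$ on the generating sets shows immediately that they are mutually inverse.

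The step I expect to require the most care is the bookkeeping of the log structures, since the two objects carry the amalgamated-sum log structures $M^1_{B/A}=M_B\oplus_{B^*}\HS^{1*}_{B/A}(M_B/M_A)$ and $M_\Omega=M_B\oplus_{B^*}(\Sym\Omega_{B/A}(M_B/M_A))^*$. The key observation is that each of these is the log structure induced from $M_B$ along the respective $B$-algebra structure map, exactly as in Example \ref{ex:strict}; in particular the monoid differential is recorded at the ring level, via $\partial_1$ respectively $\partial$, and not in the log-structure morphism, so there is no hidden data to match. Consequently the ring isomorphism $\Phi$ lifts, by the universal property of the amalgamated sum, to a unique isomorphism $M^1_{B/A}\cong M_\Omega$ that is the identity on $M_B$ and agrees with $\Phi$ on units, and similarly for $\Psi$; these are automatically mutually inverse and compatible with the structure maps over $B$. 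This produces the desired isomorphism of log algebras, and its naturality is inherited from the naturality of the two universal constructions.
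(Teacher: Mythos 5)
Your proposal is correct and uses essentially the same ingredients as the paper: the universal property of $\Omega_{B/A}(M_B/M_A)$ from Theorem \ref{existsuniv}, the universal property of the symmetric algebra, and the amalgamated-sum description of the log structures. The only difference is presentational — the paper verifies that $\Sym\Omega_{B/A}(M_B/M_A)$ represents the functor of higher log derivations of order $1$ and invokes Theorem \ref{thm:func}, whereas you unwind that argument into a pair of explicitly defined, mutually inverse maps (your $\Psi$ being precisely the paper's factorization specialized to $R=\HS^1_{B/A}(M_B/M_A)$, and your $\Phi$ being the map Theorem \ref{thm:func} attaches to the derivation $(j,d,\partial)$).
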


\begin{proof}
It suffices to construct a higher log derivation of order $1$ from $B$ to the symmetric algebra $\Sym \Omega_{B/A}(M_B/M_A)$, and to verify that it satisfies the universal property of Theorem \ref{thm:func}. Define a higher log derivation of order $1$ from $B$ to $\Sym \Omega_{B/A}(M_B/M_A)$ by $(j, d, \partial)$, where $j:B\rightarrow \Sym^0 \Omega_{B/A}(M_B/M_A) = B$ is the identity map and $(d,\partial)$ are the universal log derivations coming from Theorem \ref{existsuniv} followed by the isomorphism $\Omega_{B/A}(M_B/M_A)\rightarrow \Sym^1 \Omega_{B/A}(M_B/M_A)$.

We must show that any higher log derivation of order $1$, $(D_0,D_1,\delta_1): B\rightarrow R$, factors uniquely through $\Sym \Omega_{B/A}(M_B/M_A)$. By Theorem \ref{existsuniv} there exists a unique morphism of $B$-modules $h:\Omega_{B/A}(M_B/M_A)\rightarrow R$ making the following diagram commute
\[
\xymatrix{
\Omega_{B/A}(M_B/M_A) \ar@{.>}[dr]|-h &\\
B \ar[u]^{(d,\partial)} \ar[r]_{(D_1,\delta_1)} & R
}
\]
where the vertical and horizontal arrows are log derivations and $h$ is a morphism of $B$-modules. By the universal property of symmetric algebras, $h$ extends uniquely to the symmetric algebra, $\Sym \Omega_{B/A}(M_B/M_A)$, making the following diagram commute
\[
\xymatrix{
\Sym\Omega_{B/A}(M_B/M_A) \ar@{.>}[dr]|-h &\\
B \ar[u]^{(j,d,\partial)} \ar[r]_{(D_0,D_1,\delta_1)} & R
}
\]
where the vertical and horizontal arrows are the higher log derivations of order $1$, and $h$ is a homomorphism of $B$-algebras. It suffices now to show that $h$ uniquely determines a morphism of monoids $h^\flat: M_\Omega\rightarrow M_R$, which make $(h,h^\flat)$ into morphism of log algebras. Note that the diagram
\[
\xymatrix{
M_R & \left(\bigoplus_{i=0}^\infty \Sym^i\Omega_{B/A}(M_B/M_A)\right)^*\ar[l]_<<<<{\alpha_R^{-1}\circ h}\\
M_B\ar[u]^{D_0^\flat} & B^*\ar[l]^{\alpha_B^{-1}}\ar[u]_j
}
\]
commutes since $\alpha_R^{-1}\circ h\circ j = \alpha_R^{-1}\circ D_0$ by the commutativity of the previous diagram and $\alpha_R^{-1}\circ D_0 = D_0^\flat\circ \alpha_B^{-1}$, when defined, since $D_0$ is a morphism of log algebras. Therefore by the universal property of amalgamated sums there exists a unique morphism $h^\flat$ such that $h^\flat \circ j^\flat = D_0^\flat$. By the First Universal Property, Theorem \ref{thm:func},
\[
\HS^1_{B/A}(M_B/M_A) \cong \bigoplus_{i=0}^\infty \Sym^i\Omega_{B/A}(M_B/M_A).
\]
\end{proof}

\begin{rem}
In particular by examining the proof we get that $\Omega_{B/A}(M_B/M_A)$ is just the degree $1$ graded component of $\HS^1_{B/A}(M_B/M_A)$.
\end{rem}

\begin{prop}\label{prop:fg}
Let $B$ be a finitely generated $A$ log algebra and $M_B$ finitely generated over $B^*$. Then $\HS^n_{B/A}(M_B/M_A)$ is a finitely generated log algebra over $A$.
\end{prop}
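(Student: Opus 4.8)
Recall that $\HS^n_{B/A}(M_B/M_A)$ is defined as a quotient of the polynomial ring on the symbols $b^{(i)}$ and $m^{(i)}$ for all $b\in B$, $m\in M_B$, and $1\le i\le n$. A priori this is an enormous generating set indexed by all ring and monoid elements, so the task is to cut it down to a finite one using the relations in the ideal $I^n_{B/A}$. By hypothesis $B$ is a finitely generated $A$-algebra, say generated by $x_1,\dots,x_r$, and $M_B$ is finitely generated over $B^*$, meaning there is a finitely generated monoid $P$ with generators $\mu_1,\dots,\mu_s$ and an isomorphism $P\oplus_{\alpha_P^{-1}(B^*)}B^*\cong M_B$.

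\medskip
First I would handle the ring generators. The relations \eqref{eq:sum} and \eqref{eq:product} together express $(x+y)^{(i)}$ and $(xy)^{(i)}$ in terms of lower-order and lower-complexity symbols, so a standard induction shows that every $b^{(i)}$ lies in the subalgebra generated over $B$ by the $x_j^{(i)}$ together with the constants $a^{(i)}$, which vanish by the first defining set. Thus the finitely many symbols $\{x_j^{(i)} : 1\le j\le r,\ 1\le i\le n\}$ suffice to generate the image of all the $b^{(i)}$. Second, for the monoid symbols, relation \eqref{eq:monoidprod} expresses $(m+p)^{(i)}$ as a sum of products of lower-order $m^{(j)},p^{(k)}$, so additivity in $M_B$ reduces every $m^{(i)}$ to a polynomial in the symbols of the generators. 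Here I must use that $M_B\cong P\oplus_{B^*}$; the elements coming from $B^*$ contribute symbols $u^{(i)}$ for units $u$, but these are controlled: writing $u=\alpha_B(v)$ for the corresponding monoid element, relation $\alpha(m)^{(i)}-\alpha(m)^{(0)}m^{(i)}$ ties $u^{(i)}$ to $m^{(i)}$, and the symbols for units are already accounted for among the $b^{(i)}$ since $u\in B$. Hence the finitely many symbols $\{\mu_k^{(i)} : 1\le k\le s,\ 1\le i\le n\}$ suffice for the monoid part.

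\medskip
Combining these, $\HS^n_{B/A}(M_B/M_A)$ is generated as a $B$-algebra by the finite set $\{x_j^{(i)}\}\cup\{\mu_k^{(i)}\}$, and since $B$ is itself finitely generated over $A$, the whole ring is a finitely generated $A$-algebra. For the log structure, recall $M^n_{B/A}=M_B\oplus_{B^*}\HS^{n*}_{B/A}(M_B/M_A)$; since $M_B$ is finitely generated over $B^*$ and the amalgamation is over the units of the ring, the resulting log structure is finitely generated over the units of $\HS^n_{B/A}(M_B/M_A)$, which is exactly the claim.

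\medskip
\textbf{The main obstacle} I expect is the bookkeeping around the monoid relation \eqref{eq:monoidprod} in characteristic-sensitive form, and more subtly the interaction between the $B^*$-part of $M_B$ and the ring symbols $b^{(i)}$. One must verify carefully that expressing $m^{(i)}$ for a unit-valued $m$ does not secretly require infinitely many new generators — this is where relation $\alpha(m)^{(i)}=\alpha(m)^{(0)}m^{(i)}$ and the structure of the amalgamated sum must be invoked to identify $m^{(i)}$ with (a unit multiple of) an $\alpha(m)^{(i)}$ that is already in the span of the $x_j^{(i)}$. The inductions themselves are routine once set up, so the real care is in confirming that the finite generating set genuinely surjects, rather than in any hard estimate.
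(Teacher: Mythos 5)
Your proposal is correct and follows essentially the same route as the paper: reduce the ring symbols to the $d_i$ of the algebra generators via the sum and product relations, reduce the monoid symbols to the $\partial_i$ of the generators of $P$ via the monoid product relation, and dispose of the $B^*$-part of the amalgamated sum by using $\alpha(m)^{(i)}=\alpha(m)^{(0)}m^{(i)}$ to write $\partial_i u = d_i u/u$, which already lies in the $B$-subalgebra generated by the ring symbols. The only difference is that you add an explicit (and welcome) closing remark about finite generation of the log structure $M_B\oplus_{B^*}\HS^{n*}_{B/A}(M_B/M_A)$ over the units, which the paper's proof leaves implicit.
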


\begin{proof}
Since $M_B$ is finitely generated over $B^*$, there exists a finitely generated pre-log structure $\alpha:P\rightarrow M_B$ such that $M_B \cong P \oplus_{\alpha^{-1}(B)^*} B^*$. Let $p_1,\ldots,p_r$ denote the generating set of $P$. Similarly since $B$ is finitely generated over $A$, let $b_1,\ldots, b_s$ generate $B$ as an algebra over $A$. The ring, $\HS^n_{B/A}(M_B/M_A)$, is generated as an algebra over $A$ by elements of the form $d_i x, \partial_i m$ for $i=0, \ldots, n$ and $x\in B, m\in M_B$. By the $A$-linearity of $d_i$, Equation \ref{eq:sum}, $d_i x$ can be written as a sum of terms of the form $a d_i y$ where $a\in A$ and $y$ is a monomial in the $b_j$. By repeated application of Equation \ref{eq:product} the element $d_i y$ is a polynomial in the $d_v b_j$ for $v=0,\ldots, i$ and $j=1,\ldots, s$. Similarly let $q\in P$. Then $q=(p, u)$ where $p\in P$ is a monomial in the $p_j$ and $u\in B^*$ is a unit. However, $\partial_i u = \frac{d_i u}{u}$, which is a polynomial in the $d_v b_j$ over $A$ for $v=0,\ldots, i$ and $j=1,\ldots, s$ by our previous observation. By repeated application of Equation \ref{eq:monoidprod}, $\partial_i q$ is a polynomial in the $\partial_i p_k,\partial_i u$ for $i=1,\ldots, n$, $k=1,\ldots, r$. Therefore $\HS^n_{B/A}(M_B/M_A)$ is generated over $A$ by the $\partial_i p_k, d_i b_j$ for $i=1,\ldots n$, $j=1,\ldots, s$ and $k=1,\ldots, r$.
\end{proof}

\begin{rem}\label{rem:omegafg}
In the above setting $\Omega_{B/A}(M_B/M_A)$ is a finitely generated $B$-module. If $B$ is not fine this is not necessarily true.
\end{rem}

\begin{prop}\label{prop:welldef}
Let $f:A\rightarrow B$ be a morphism of log algebras. Then there exists a unique $A$-module homomorphism
\begin{equation}\label{eq:welldef}
d: \HS^n_{B/A}(M_B/M_A) \rightarrow \HS^{n+1}_{B/A}(M_B/M_A)
\end{equation}
satisfying
\begin{enumerate}
\item[i.] $d(uv) = ud(v)+vd(u)$ for all $u,v\in \HS^n_{B/A}(M_B/M_A)$
\item[ii] $d(u+v) = d(u)+d(v)$ for all $u,v\in \HS^n_{B/A}(M_B/M_A)$
\item[iii.] $d(d_i x) = (i+1)d_{i+1}x$ for all $x\in B$ and $i=0,\ldots,n$
\item[iv.] $d(\partial_i m) = (i+1)\partial_{i+1}m-\partial_1 m \partial_i m$ for all $m\in M_B$ and $i=1,\ldots,n$
\end{enumerate}
where the obvious inclusion maps have been omitted. By abuse of notation the author will refer to all such maps as $d$ regardless of $n$. Additionally define $d^m:\HS^n_{B/A}(M_B/M_A)\rightarrow \HS^{n+m}_{B/A}(M_B/M_A)$ by repeated application of $d$.
\end{prop}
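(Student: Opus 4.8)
The plan is to realize $d$ as a derivation on the presenting polynomial ring and then descend it to the quotient $\HS^n_{B/A}(M_B/M_A) = B[b^{(i)},m^{(i)}]/I^n_{B/A}$. Write $P_n = B[b^{(i)},m^{(i)}]_{b\in B; m\in M_B; i=1,\ldots,n}$ for the free polynomial ring before quotienting, and regard the target $\HS^{n+1}_{B/A}(M_B/M_A)$ as a $P_n$-module through the natural ring map $P_n \twoheadrightarrow \HS^n_{B/A}(M_B/M_A)\hookrightarrow \HS^{n+1}_{B/A}(M_B/M_A)$. Note that $I^n_{B/A}$ acts as zero on the target, since each of its generators (the relations for $i=1,\ldots,n$) is among the generators of $I^{n+1}_{B/A}$.

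First I would check that $\partial_0\colon B\to \HS^{n+1}_{B/A}(M_B/M_A)$, $x\mapsto x^{(1)} = d_1 x$, is an $A$-linear derivation: the Leibniz rule is exactly Equation \ref{eq:product} at $i=1$, additivity is Equation \ref{eq:sum} at $i=1$, and $A$-linearity follows because $g_B(a)^{(1)}=0$ in $\HS^{n+1}_{B/A}(M_B/M_A)$. By the universal property of the polynomial ring there is then a unique derivation $\tilde d\colon P_n\to \HS^{n+1}_{B/A}(M_B/M_A)$ extending $\partial_0$ and sending $b^{(i)}\mapsto (i+1)b^{(i+1)}$ and $m^{(i)}\mapsto (i+1)m^{(i+1)}-m^{(1)}m^{(i)}$ for $i=1,\ldots,n$; these assignments are forced by the desired properties iii and iv. A pleasant bookkeeping point, which I would record because it streamlines the later checks, is that both formulas remain valid at $i=0$: $\partial_0 x = x^{(1)}$ agrees with $(0+1)x^{(1)}$, and $\tilde d(m^{(0)})=\tilde d(1)=0$ agrees with $1\cdot m^{(1)}-m^{(1)}\cdot 1$.

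Next I would verify that $\tilde d$ annihilates each of the six families of generators of $I^n_{B/A}$. Since any $f\in I^n_{B/A}$ is a combination $\sum_k g_k h_k$ with each $h_k$ a generator, and $I^n_{B/A}$ annihilates the target, $\tilde d(f)=\sum_k g_k\tilde d(h_k)$ vanishes once each $\tilde d(h_k)=0$; hence $\tilde d$ descends to the required map $d$. The two constant-type families and the additive family \ref{eq:sum} are immediate from the relations of $I^{n+1}_{B/A}$. The multiplicative families \ref{eq:product} and \ref{eq:monoidprod} are the crux: applying $\tilde d$ to $\sum_{j+k=i}x^{(j)}y^{(k)}$ and collecting terms, the two index shifts produce the total coefficient $a+b=i+1$ on each monomial $x^{(a)}y^{(b)}$, so the sum telescopes to $(i+1)(xy)^{(i+1)}$ after invoking Equation \ref{eq:product} in order $n+1$; the monoid case runs identically, with the correction term $-m^{(1)}m^{(i)}$ accounting exactly for the factor $-(m+p)^{(1)}(m+p)^{(i)}$ once \ref{eq:monoidprod} is applied. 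The last family $\alpha(m)^{(i)}-\alpha(m)^{(0)}m^{(i)}$ collapses by substituting $\alpha(m)^{(j)}=\alpha(m)m^{(j)}$ from the order-$(n+1)$ relations.

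Finally, uniqueness is forced: $\HS^n_{B/A}(M_B/M_A)$ is generated over $A$ by the $d_i x$ and $\partial_i m$, on which any map satisfying iii and iv is prescribed, and properties i, ii together with $A$-linearity then determine $d$ on all products and $A$-combinations. I expect the only real obstacle to be the combinatorial identity underlying the product relations, namely that the weights $(i+1)$ in iii make $\sum_{a+b=i+1}(a+b)x^{(a)}y^{(b)}$ reduce to $(i+1)(xy)^{(i+1)}$; once this is in hand, the monoid relation (including the $-\partial_1 m\,\partial_i m$ correction mandated by characteristic-independence, cf.\ Remark \ref{rem:oddlog}) and the descent to the quotient are routine.
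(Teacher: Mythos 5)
Your proposal is correct and follows essentially the same route as the paper: define $d$ on the presenting polynomial ring by the prescribed values on $b^{(i)}$ and $m^{(i)}$, then descend to the quotient by verifying that each of the six families of generators of $I^n_{B/A}$ is annihilated (equivalently, mapped into $I^{n+1}_{B/A}$), with the same telescoping identity $\sum_{a+b=i+1}(a+b)x^{(a)}y^{(b)}=(i+1)\sum_{a+b=i+1}x^{(a)}y^{(b)}$ handling the product and monoid relations. Your framing of the target as a $P_n$-module killed by $I^n_{B/A}$, and your explicit uniqueness remark, are only cosmetic refinements of the paper's argument.
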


\begin{proof}
First note that the map
\[
d: B[b^{(i)}, m^{(i)}]_{b\in B; m\in M_B; i=1,\ldots, n}\rightarrow B[b^{(i)}, m^{(i)}]_{b\in B; m\in M_B; i=1,\ldots, n+1}
\]
constructed identically as above is a well defined morphism of $A$-modules. The original map \ref{eq:welldef} is well defined if and only if $d(I^n_{B/A})\subset I^{n+1}_{B/A}$. It suffices to check the latter on a generating set. For $a\in A$ we have
\[
d(f(a)^{(i)}) = (i+1)f(a)^{(i+1)} \in I^{n+1}_{B/A}.
\]
Let $x,y\in B$. Then
\[
d\left((x+y)^{(i)}-x^{(i)}-y^{(i)}\right) = (i+1)\left((x+y)^{(i+1)}-x^{(i+1)}-y^{(i+1)}\right) \in I^{n+1}_{B/A}
\]
and
\begin{equation}\label{eq:sumtrick}
\begin{split}
d\left((xy)^{(i)}-\sum_{j+k=i} x^{(j)} y^{(j)}\right) &= (i+1)(xy)^{(i+1)}-\sum_{j+k=i}(j+1)x^{(j+1)}y^{(k)}-\sum_{j+k=i}(k+1)x^{(j)}y^{(k+1)}\\
&= (i+1)(xy)^{(i+1)}-\sum_{j+k=i+1} jx^{(j)}y^{(k)}-\sum_{j+k=i+1} kx^{(j)}y^{(k)}\\
&= (i+1)\left((xy)^{(i+1)}-\sum_{j+k=i+1}x^{(j)}y^{(k)}\right)\in I^{n+1}_{B/A}.
\end{split}
\end{equation}
Let $a\in M_A$. Then
\[
d(f^\flat(a)^{i}) = (i+1)f^\flat(a)^{(i+1)}-f^\flat(a)^{(1)}f^\flat(a)^{(i)}\in I^{n+1}_{B/A}.
\]
Let $m,p\in M_B$. Then
\[
\begin{split}
&d(\alpha(m)^{(i)}-\alpha(m)^{(0)}m^{(i)})\\
&=(i+1)\alpha(m)^{(i+1)}-\left(\alpha(m)^{(1)}m^{(i)}+\alpha(m)^{(0)}\left((i+1)m^{(i+1)}-m^{(1)}m^{(i)} \right)\right)\\
&=(i+1)\left(\alpha(m)^{(i+1)}-\alpha(m)^{(0)}m^{(i+1)}\right)-m^{(i)}\left(\alpha(m)^{(1)}-\alpha(m)^{(0)}m^{(1)}\right) \in I^{n+1}_{B/A}
\end{split}
\]
and
\begin{eqnarray*}
\lefteqn{d\left((m+p)^{(i)}-\sum_{j+k=i}m^{(j)}p^{(k)}\right) }\\
&&=(i+1)(m+p)^{(i+1)} - (m+p)^{(1)}(m+p)^{(i)}-\\ &&\sum_{j+k=i}\left(((j+1)m^{(j+1)} - m^{(1)}m^{(j)})p^{(k)} + m^{(j)}((k+1)p^{(k+1)} - p^{(1)}p^{(k)}\right).
\end{eqnarray*}
After a similar manipulation to \ref{eq:sumtrick} the above becomes
\[
(i+1)(m+p)^{(i+1)}-(m+p)^{(1)}(m+p)^{(i)}-(i+1)\sum_{j+k=i+1}m^{(j)}p^{(k)}+(m^{(1)}+p^{(1)})\sum_{j+k=i}m^{(j)}p^{(k)}
\]
and after grouping terms
\[
(i+1)\left((m+p)^{(i+1)}-\sum_{j+k=i+1}m^{(j)}p^{(k)}\right) +\left( (m^{(1)}+p^{(1)})\sum_{j+k=i}m^{(j)}p^{(k)}-(m+p)^{(1)}(m+p)^{(i)}\right),
\]
which is an element of $I^{n+1}_{B/A}$. Therefore $d(I^n_{B/A})\subset I^{n+1}_{B/A}$ and the morphism is well defined.
\end{proof}

\begin{thm}\label{thm:gendiff}
Let $B$ be an $A$ log algebra, $A$ a $k$-algebra with char(k)=0. Suppose $\HS^n_{B/A}(M_B/M_A)$ is generated by $\{\omega_i\}_{i\in I}$ over $B$ as an algebra for some $n\geq 1$. Then $\HS^{n+1}_{B/A}(M_B/M_A)$ is generated by $\{\omega_i,d(\omega_i)\}_{i\in I}$ over $B$ as an algebra, where $d$ is defined in Proposition \ref{prop:welldef}.
\end{thm}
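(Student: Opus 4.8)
The plan is to argue directly from the presentation of $\HS^{n+1}_{B/A}(M_B/M_A)$ in Definition \ref{def:hsdef}: as a $B$-algebra it is generated by the classes $d_i(b)$ and $\partial_i(m)$ for $b\in B$, $m\in M_B$, and $i=1,\ldots,n+1$. Write $S$ for the $B$-subalgebra generated by $\{\omega_i,d(\omega_i)\}_{i\in I}$, and $S_0\subseteq S$ for the image of $\HS^n_{B/A}(M_B/M_A)$ under the canonical inclusion, which is exactly the $B$-subalgebra generated by $\{\omega_i\}_{i\in I}$ since the $\omega_i$ generate $\HS^n_{B/A}(M_B/M_A)$ over $B$. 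The generators of $\HS^{n+1}_{B/A}(M_B/M_A)$ of level $i\le n$ already lie in $S_0$: each such $d_i(b)$ or $\partial_i(m)$ is a $B$-polynomial in the $\omega_i$ inside $\HS^n_{B/A}(M_B/M_A)$, and applying the $B$-algebra inclusion preserves this expression. Thus the whole problem reduces to showing that the two top-level families $d_{n+1}(b)$ and $\partial_{n+1}(m)$ also lie in $S$.

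For this I would combine two ingredients. The first is that $\chr(k)=0$, so $n+1$ is a unit in $B$; the second is the derivation $d$ of Proposition \ref{prop:welldef}. Property iii at level $n$ reads $d(d_n b)=(n+1)d_{n+1}b$, giving $d_{n+1}(b)=\tfrac{1}{n+1}d(d_n b)$, and property iv at level $n$ reads $d(\partial_n m)=(n+1)\partial_{n+1}m-\partial_1 m\,\partial_n m$, giving $\partial_{n+1}(m)=\tfrac{1}{n+1}\bigl(d(\partial_n m)+\partial_1 m\,\partial_n m\bigr)$. Since $d_n(b)$, $\partial_n(m)$, and $\partial_1(m)$ all lie in $S_0$ (using $n\ge 1$ for the level-$1$ term), the product $\partial_1 m\,\partial_n m$ lies in $S$, and both formulas will express the top-level generators inside $S$ provided $d$ carries $S_0$ into $S$.

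The hard part is therefore the closure statement $d(S_0)\subseteq S$, which is where care is needed because $d$ is only $A$-linear, not $B$-linear. Using additivity and the Leibniz rule (properties i, ii of Proposition \ref{prop:welldef}), applying $d$ to a typical element $\sum_\alpha b_\alpha\prod_i\omega_i^{\alpha_i}$ of $S_0$ produces
\[
\sum_\alpha d(b_\alpha)\prod_i\omega_i^{\alpha_i}+\sum_\alpha b_\alpha\sum_i \alpha_i\,\omega_i^{\alpha_i-1}d(\omega_i)\prod_{j\ne i}\omega_j^{\alpha_j}.
\]
The second sum manifestly lies in $S$. For the first sum the subtlety is that the coefficients $b_\alpha\in B$ are themselves differentiated; but property iii with $i=0$ gives $d(b_\alpha)=d_1(b_\alpha)$, which lies in $\HS^n_{B/A}(M_B/M_A)$ because $n\ge 1$, hence is a $B$-polynomial in the $\omega_i$ and so lies in $S_0\subseteq S$. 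Therefore $d(S_0)\subseteq S$. Feeding this back into the two formulas of the previous paragraph places $d_{n+1}(b)$ and $\partial_{n+1}(m)$ in $S$, and since all lower-level generators were already seen to lie in $S_0\subseteq S$, we conclude $\HS^{n+1}_{B/A}(M_B/M_A)=S$, as desired.
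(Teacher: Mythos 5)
Your proposal is correct and follows essentially the same route as the paper: both reduce to the top-level generators $d_{n+1}b$ and $\partial_{n+1}m$, apply $d$ to an expression of $d_n b$ (resp.\ $\partial_n m$) as a $B$-polynomial in the $\omega_i$ via the Leibniz rule, observe that the differentiated coefficients $d(b_\alpha)=d_1 b_\alpha$ already live in $\HS^n_{B/A}(M_B/M_A)$, and divide by the unit $n+1$. Your explicit isolation of the closure statement $d(S_0)\subseteq S$ is a slightly cleaner packaging of the same computation the paper performs inline.
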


\begin{proof} Note that $\HS^{n+1}_{B/A}(M_B/M_A)$ is generated over $\HS^n_{B/A}(M_B/M_A)$ by elements of the form $d_{n+1}b$ and $\partial_{n+1}m$ where $b\in B$ and $m\in M_B$. Starting with the former observe that $d_n b\in \HS^n_{B/A}(M_B/M_A)$, so by hypothesis
\[
d_n b = \sum_{j=1}^p b_j\alpha_j
\]
for some $b_j\in B$ and $\alpha_j$ monomials in the $\omega_i$. By Proposition \ref{prop:welldef}
\[
(n+1)d_{n+1}b = d(d_nb) = \sum_{j=1}^p\left(b_jd(\alpha_j)+\alpha_jd_1 b_j\right).
\]
By hypothesis $d_1 b_j\in \HS^n_{B/A}(M_B/M_A)$ and by repeated application of \ref{prop:welldef} part i, $d(\alpha_j)\in B[\omega_i, d(\omega_i)]_{i\in I}$. Since $n+1\neq 0$, $d_{n+1}b\in B[\omega_i,d(\omega_i)]_{i\in I}$. For the latter case note that
\[
\partial_{n}m = \sum_{j=1}^r m_j \alpha_j
\]
for some $m_j\in B$ and $\alpha_j$ monomials in the $\omega_i$. By Proposition \ref{prop:welldef}
\[
(n+1)\partial_{n+1}m-\partial_1 m\partial_n m = d(\partial_n m) = \sum_{j=1}^r\left(\alpha_jd_1 m_j+m_j d(\alpha_j)\right).
\]
By hypothesis $d_1 m_j, \partial_1 m, \partial_n m \in \HS^n_{B/A}(M_B/M_A)$ and by repeated application of \ref{prop:welldef} part i, $d(\alpha_j)\in B[\omega_i,d(\omega_i)]_{i\in I}$. Since $n+1 \neq 0$, $\partial_{n+1}m \in B[\omega_1,d(\omega_1),\ldots, \omega_r, d(\omega_r)]$. Thus $\HS^{n+1}_{B/A}(M_B/M_A)$ is generated by $\{\omega_i,d(\omega_i)\}$ over $B$.
\end{proof}

\begin{rem} The condition that the characteristic of the field be $0$ is necessary in general. Let $A = \mathbb{F}_2$, $B = \mathbb{F}_2[x]$, both with trivial log structure. Then $\HS^1_{B/A}(M_B/M_A) = \mathbb{F}_2[x, d_1 x]$. However, $d(d_1 x) = 0$ and $\HS^2_{B/A}(M_B/M_A) = \mathbb{F}_2[x,d_1 x,d_2 x]$.
\end{rem}

\begin{example}\label{ex:gendifff}
In characteristic $0$ we are now able to recover what might be considered a more standard definition of derivative. Consider $\ln f(z)$ where $f(z)$ is some analytic function and a branch of the natural logarithm has been fixed. Then
\[
\frac{d}{dz} \ln f(z) = \frac{f'(z)}{f(z)}
\]
and
\[
\frac{d^2}{dz^2} \ln f(z) = \frac{f''(z)}{f(z)} - \left(\frac{f'(z)}{f(z)}\right)^2.
\]
On the other hand let $\alpha:M\rightarrow \C[x]$ be a log structure with some non-constant element $f\in M$. Then
\[
\partial_1 f = \frac{d_1 \alpha(f)}{\alpha(f)}
\]
which agrees in spirit at least with the standard complex case. However,
\[
\partial_2 f = \frac{d_2 \alpha(f)}{\alpha(f)},
\]
which is vastly different. If instead we consider
\[
d\partial_1 f = 2\frac{d_2 \alpha(f)}{\alpha(f)}-\left(\frac{d_1\alpha(f)}{\alpha(f)}\right)^2
\]
we get something much closer. Recalling that the $d_i$ are in fact divided differentials, the two expressions agree identically.
\end{example}

\section{Constructions and Properties}
The next goal is to actually compute some of these log Hasse-Schmidt rings. Unfortunately definition \ref{def:hsdef} gives a construction which is not easy to work with in practice. In the case of Hasse-Schmidt rings for quotients of polynomial algebras there is a very straightforward method of computing the resulting Hasse-Schmidt ring, see \cite{vojt1} for instance.

The method described in \cite{vojt1} is an application of the second fundamental exact sequence for differential forms to the case of Hasse-Schmidt rings. This does further generalize to the case of log Hasse-Schmidt rings, but some extra conditions must be imposed. For instance consider the following example

\begin{example}\label{ex:bestexample}
Let $B=k[x,y]$, $C=k[x,y]/(x^2-y^3)$ be $k$ algebras, $\character(k)\neq 2,3$. From \cite{vojt1} we know that
\[
\HS^1_{C/k}\cong \HS^1_{B/k}/(x^2-y^3,d(x^2-y^3)) \cong k[x,y,dx,dy]/(x^2-y^3, 2xdx-3y^2dy)
\]
Now define $M_B$ to be the minimal sub-log structure of $B^\times$ containing $x$ and $y$ and define $M_C$ to be the minimal sub-log structure of $C^\times$ containing $x$ and $y$. While we do not yet have the machinery to prove it, we will soon be able to see
\[
\begin{split}
\HS^1_{B/k}(M_B/k^*)/(x^2-y^3,d(x^2-y^3)) &\cong
k[x,y,\partial x,\partial y]/(x^2-y^3, 2x^2\partial x-3y^3\partial y)\\
&\cong k[x,y,\partial x,\partial y]/(x^2-y^3, x^2(2\partial x-3\partial y)),
\end{split}
\]
which is not an integral domain. This may not seem that significant, but the actual log Hasse-Schmidt ring should be
\[
\HS^1_{C/k}(M_C/k^*) \cong k[x,y,\partial x,\partial y]/(x^2-y^3,2\partial x-3\partial y) \cong k[x,y,\partial x]/(x^2-y^3),
\]
which is an integral domain and is well behaved. Although this is not a rigorous proof that this is the answer, one can at least see that $2\partial x-3\partial y$ has to be in the quotient. In $M_C$, $x^2=y^3$ since it is a sub-monoid of $C^\times$ and $x^2=y^3$ in $C^\times$, so $\partial(x^2)=\partial(y^3)$, which implies $2\partial x-3\partial y=0$. Therefore the desired relation holds.
\end{example}

Obviously the problem here is that there are possibly relations on $M_C$ which don't come from $M_B$. There are a couple of approaches to fixing this. One is to simply quotient out by all additional relations on $M_C$. This is cumbersome in general. The second is to simply restrict what is allowed for $M_C$. In particular, we could have given $M_C$ the log structure generated by $x,y$ with no relations. It turns out this works and is the same as asking the morphism $B\rightarrow C$ to be strict. This will be a sufficient condition for the second fundamental exact sequence to hold.

This answer is still not satisfactory though. The ring $C$ in this example is actually a monoid algebra, and the log structure described in the example comes from the corresponding monoid. If $C$ were to have any sort of natural log structure, the $M_C$ described above should be it. There is fortunately another solution and while it will not work in every case, it will give us a different tool for constructing log Hasse-Schmidt rings.

Let $S\subset \HS^1_{B/k}(B^*/k^*)/(x^2-y^3)$ be the multiplicative set generated by $x$ and $y$. Then
\[
\begin{split}
S^{-1}\HS^1_{B/k}(B^*/k^*)/(x^2-y^3,d(x^2-y^3)) &\cong k[x,y,\partial x,\partial y]_{xy}/(x^2-y^3,x^2(2\partial x-3\partial y))\\
&\cong k[x,y,\partial x,\partial y]_{xy}/(x^2-y^3,(2\partial x-3\partial y))
\end{split}
\]
and we can construct $\HS^1_{C/k}(M_C/k^*)$ as the subring generated by $x,y,\partial x,\partial y$. Of course there will have to be some conditions put in place and all of this will have to be made formal, but it is a possible approach in many instances.

\begin{thm}[Second Fundamental Exact Sequence]\label{thm:construction}
Let $A\rightarrow B\rightarrow C$ be a sequence of log algebras. Additionally suppose $B\rightarrow C$ is surjective and strict in the category of log algebras with kernel $I$. Define $J$ to be the ideal
\[
J := (d_i x)_{i=0,\ldots n; x\in I} \subseteq \HS^n_{B/A}(M_B/M_A).
\]
Then
\[
0\rightarrow J\rightarrow \HS^n_{B/A}(M_B/M_A)\rightarrow \HS^n_{C/A}(M_C/M_A)\rightarrow 0
\]
is an exact sequence of $B$-modules.
\end{thm}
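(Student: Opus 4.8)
The plan is to prove that $J$ is precisely the kernel of the functorial map $\phi\colon \HS^n_{B/A}(M_B/M_A) \to \HS^n_{C/A}(M_C/M_A)$ and that $\phi$ is surjective; injectivity of the inclusion $J \hookrightarrow \HS^n_{B/A}(M_B/M_A)$ is automatic. The containment $J \subseteq \ker\phi$ is immediate: for $x \in I$ we have $\phi(d_i x) = d_i(f(x)) = d_i(0) = 0$ since $f(x)=0$ in $C$, and $J$ is generated by these elements. The entire difficulty lies in the reverse containment $\ker\phi \subseteq J$.

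My approach is to establish this by a universal property argument rather than a direct computation on the ideal. Set $\bar H := \HS^n_{B/A}(M_B/M_A)/J$, equipped with the log structure and universal derivation induced from $\HS^n_{B/A}(M_B/M_A)$. I will show that $\bar H$ represents the functor $R \mapsto \Der^n_A(C,R)$. Since $\HS^n_{C/A}(M_C/M_A)$ represents the same functor by the First Universal Property (Theorem \ref{thm:func}), uniqueness of representing objects yields an isomorphism $\bar H \cong \HS^n_{C/A}(M_C/M_A)$ compatible with the evident projections. As $\phi$ then factors as this isomorphism composed with the quotient map $\HS^n_{B/A}(M_B/M_A) \to \bar H$, this simultaneously gives $\ker\phi = J$ and surjectivity of $\phi$.

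To identify $\Hom_A(\bar H, R)$: by the universal property of the quotient, a morphism $\HS^n_{B/A}(M_B/M_A) \to R$ factors through $\bar H$ iff it annihilates $J$, i.e.\ iff the corresponding higher log derivation $(D_0,\ldots,\delta_n) \in \Der^n_A(B,R)$ of Theorem \ref{thm:func} satisfies $D_i(x)=0$ for all $x \in I$ and all $i=0,\ldots,n$. I then claim such derivations are in natural bijection with $\Der^n_A(C,R)$. One direction is trivial: any higher log derivation from $C$ pulls back along $B\to C$ and $M_B\to M_C$ to one from $B$ that kills $I$. For the converse I pass through the equivalent description of Theorem \ref{thm:rep} and Corollary \ref{cor:functor}: such a derivation corresponds to a morphism $\psi\colon B \to R[t]/t^{n+1}$ of log algebras over $A$ whose underlying ring map kills $I$, because $\psi(x) = \sum_i D_i(x)t^i = 0$ for $x \in I$, and hence factors as a ring map $\bar\psi\colon C \to R[t]/t^{n+1}$.

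The essential step, and the one I expect to be the main obstacle, is promoting $\bar\psi$ to a morphism of log algebras, i.e.\ descending the monoid component $\psi^\flat\colon M_B \to \widehat{M}^n_R$ to $M_C$. Here strictness is indispensable: since $B\to C$ is strict, $M_C \cong M_B \oplus_{B^*} C^*$, so by the universal property of the amalgamated sum it suffices to produce compatible morphisms out of $M_B$ (namely $\psi^\flat$) and out of $C^*$ (namely $C^* \xrightarrow{\bar\psi} (R[t]/t^{n+1})^* \hookrightarrow \widehat{M}^n_R$) and to verify they agree on $B^*$. The agreement on $B^*$ follows from $\psi$ being a log morphism, which pins down $\psi^\flat$ on the units $B^* \subseteq M_B$ in terms of $\psi$ itself. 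Granting this, one checks routinely that the descended tuple satisfies conditions i through v of Definition \ref{def:logder} — using that strictness gives $\alpha_C(\overline{(m,c)}) = f(\alpha_B(m))\,c$ for condition iii — so it is a genuine element of $\Der^n_A(C,R)$, and the two constructions are mutually inverse. This yields the bijection $\Hom_A(\bar H, R) \cong \Der^n_A(C,R) \cong \Hom_A(\HS^n_{C/A}(M_C/M_A), R)$ naturally in $R$, and the Yoneda lemma finishes the proof.
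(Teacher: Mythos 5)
Your proposal is correct and follows essentially the same route as the paper: both identify $\HS^n_{B/A}(M_B/M_A)/J$ with $\HS^n_{C/A}(M_C/M_A)$ by showing they corepresent the same functor of $A$-log algebras $R$ (via the $\Hom_A(-,R[t]/t^{n+1})$ description of Corollary \ref{cor:functor}), with the crucial step in each case being the descent of the monoid component from $M_B$ to $M_C\cong M_B\oplus_{B^*}C^*$ using strictness and the universal property of the amalgamated sum, the $C^*$-leg supplied by the units of the descended ring map. The only cosmetic difference is that you phrase the intermediate functor in terms of higher log derivations killing $I$ while the paper works directly with $\Hom$-sets.
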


\begin{proof}
Let $R$ be an arbitrary log algebra over $A$ and observe that
\begin{equation} \label{eq:cpone}
\Hom_A(C,R[t]/t^{n+1})\rightarrow \{\phi\in\Hom_A(B, R[t]/t^{n+1}): \phi(I)=0\}
\end{equation}
is a bijection. To see this let $(\phi,\phi^\flat) \in \{\phi\in\Hom_A(B, R[t]/t^{n+1}): \phi(I)=0\}$. Then it follows that $\phi$ factors through $B/I\cong C$, which induces a unique morphism of rings $C\rightarrow R[t]/t^{n+1}$. For the log structure note that $\phi$ defines a morphism $\phi|_{C^*}:C^*\rightarrow \widehat{M}^n_R$, which together with $\phi^\flat: M_B\rightarrow \widehat{M}^n_R$ give a unique morphism $M_C\cong M_B\oplus_{B^*} C^*\rightarrow \widehat{M}^n_R$ by the universal property of amalgamated sums. Therefore the above map on sets \ref{eq:cpone} is a natural bijection. To complete the proof we need a bijection
\[
\Hom_A(\HS^n_{B/A}(M_B/M_A)/J, R)\rightarrow\{\phi\in\Hom_A(B, R[t]/t^{n+1}): \phi(I)=0\},
\]
where the log structure on $\HS^n_{B/A}(M_B/M_A)/J$ is determined by the log structure coming from $\HS^n_{B/A}(M_B/M_A)$. By \ref{cor:functor} we have a bijection
\[
\Hom_A(\HS^n_{B/A}(M_B/M_A), R)\rightarrow \Hom_A(B, R[t]/t^{n+1}).
\]
By the same Corollary, $\phi(I)=0$ if and only if the corresponding morphism $\HS^n_{B/A}(M_B/M_A)\rightarrow R$ sends $\{d_i x\}_{i=0,\ldots n; x\in I}$ to $0$, which gives the desired bijection. Combining everything gives
\[
\Hom_A(\HS^n_{B/A}(M_B/M_A)/J, R)\cong \Hom_A(\HS^n_{C/A}(M_C/M_A), R)
\]
for every $R$. Therefore
\[
\HS^n_{B/A}(M_B/M_A)/J\cong \HS^n_{C/A}(M_C/M_A),
\]
and the desired exact sequence must hold.
\end{proof}

\begin{cor}
In the above setting let $I$ be generated by $\{\alpha_j\}_{j\in J}$. Then $J$ is generated by $\{d_i\alpha_j\}_{i=0,\ldots,n;j\in J}$.
\end{cor}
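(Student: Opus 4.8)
The plan is to prove the two inclusions of ideals separately, with essentially all the work in one of them. The inclusion of the ideal generated by $\{d_i\alpha_j\}_{i=0,\ldots,n;\,j}$ into $J$ is immediate: each $\alpha_j$ lies in $I$, so every $d_i\alpha_j$ is literally one of the generators of $J$ listed in Theorem \ref{thm:construction}. The substance is the reverse inclusion, namely that every generator $d_i x$ of $J$, with $x\in I$ and $0\le i\le n$, already lies in the ideal generated by the $d_l\alpha_j$.

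First I would recall that the universal derivation maps $d_i\colon B\to\HS^n_{B/A}(M_B/M_A)$ are additive and satisfy the divided Leibniz identity, both inherited directly from the defining relations of the ideal $I^n_{B/A}$. Concretely, the relations \ref{eq:sum} and \ref{eq:product} force
\[
d_i(x+y)=d_i(x)+d_i(y),\qquad d_i(xy)=\sum_{k+l=i}d_k(x)\,d_l(y)
\]
to hold in the quotient $\HS^n_{B/A}(M_B/M_A)$ for all $x,y\in B$ and all $0\le i\le n$. These are exactly the tools needed to expand $d_i$ of an arbitrary element of $I$.

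The main step is then a direct computation. Given $x\in I$, write $x=\sum_j b_j\alpha_j$ as a finite $B$-linear combination of the generators $\alpha_j$ of $I$. Applying additivity and then the divided Leibniz identity yields
\[
d_i(x)=\sum_j d_i(b_j\alpha_j)=\sum_j\sum_{k+l=i}d_k(b_j)\,d_l(\alpha_j).
\]
Since $l\le i\le n$, each factor $d_l(\alpha_j)$ is one of the proposed generators, while $d_k(b_j)\in\HS^n_{B/A}(M_B/M_A)$ serves merely as a coefficient. Hence $d_i(x)$ lies in the ideal generated by $\{d_l\alpha_j\}$. As $x\in I$ and $i$ were arbitrary, the generating set $\{d_i x\}_{i;\,x\in I}$ of $J$ is contained in that ideal, which combined with the trivial inclusion gives the claimed equality.

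I do not anticipate any real obstacle. The only point requiring care is to confirm that additivity and the divided Leibniz identity are available for the $d_i$ on all of $B$, rather than just on a chosen generating set of $B$; this is precisely what relations \ref{eq:sum} and \ref{eq:product} provide after passing to the quotient. It is worth remarking that neither surjectivity nor strictness of $B\to C$ is used in this argument, since the statement concerns only the ideal $J$ inside $\HS^n_{B/A}(M_B/M_A)$ and the derivation-theoretic relations defining it.
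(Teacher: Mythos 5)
Your argument is correct and is essentially identical to the paper's own proof: write $x=\sum_j b_j\alpha_j$ and expand $d_i(x)$ via additivity and the divided Leibniz identity to land in the ideal generated by the $d_l\alpha_j$. The extra remarks (the trivial reverse inclusion and the observation that surjectivity and strictness of $B\to C$ are not needed) are accurate but not part of the paper's one-line computation.
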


\begin{proof}
Let $x\in I$. Then
\[
x = \sum_{j\in J} x_j \alpha_j,
\]
where $x_j\in B$ and all but finitely many $x_j=0$, so
\[
d_ix_j = \sum_{j\in J} d_i(x_j\alpha_j) = \sum_{j\in J} \sum_{k+l=i} d_k x_j d_l\alpha_j \in \left(d_i\alpha_j\right)_{i=0,\ldots,n;j\in J}.
\]
\end{proof}

Although we will not be needing it, a version of the first fundamental exact sequence can also be stated for log Hasse-Schmidt rings.

\begin{thm}[First Fundamental Exact Sequence] Let $A\rightarrow B\rightarrow C$ be a sequence of log algebras. Then the sequence
\[
0\rightarrow \HS^n_{B/A}(M_B/M_A)^+\HS^n_{C/A}(M_C/M_A)\rightarrow \HS^n_{C/A}(M_C/M_A)\rightarrow \HS^n_{C/B}(M_C/M_B)\rightarrow 0,
\]
where $\HS^n_{B/A}(M_B/M_A)^+$ is the ideal of elements with positive degree, is an exact sequence of $C$-modules.
\end{thm}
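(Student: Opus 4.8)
The plan is to prove the sequence by directly comparing the presentations of the two log Hasse-Schmidt rings as quotients of a common polynomial ring, rather than invoking the universal property as in the Second Fundamental Exact Sequence. Write $g_B:A\to B$ and $f:B\to C$ for the structure morphisms, so that the composite $g_C=f\circ g_B:A\to C$ gives $C$ its $A$-algebra structure. Both $\HS^n_{C/A}(M_C/M_A)$ and $\HS^n_{C/B}(M_C/M_B)$ are quotients of the \emph{same} polynomial ring $C[c^{(i)},p^{(i)}]_{c\in C;\,p\in M_C;\,i=1,\ldots,n}$ by the ideals $I^n_{C/A}$ and $I^n_{C/B}$ of Definition \ref{def:hsdef}; note that no strictness hypothesis is needed here, since the ``upstairs'' monoid $M_C$ is the same in both constructions.

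First I would show that $I^n_{C/A}\subseteq I^n_{C/B}$, yielding the natural surjection $\HS^n_{C/A}(M_C/M_A)\to\HS^n_{C/B}(M_C/M_B)$ on the right of the sequence. This is immediate from inspecting the generators: the sum, product, monoid-product and $\alpha$-relations depend only on $C$ and $M_C$ and hence coincide in both ideals, while the relations $g_C(a)^{(i)}=f(g_B(a))^{(i)}$ and $g_C^\flat(m)^{(i)}=f^\flat(g_B^\flat(m))^{(i)}$ for $a\in A$, $m\in M_A$ are special cases of the $B$-relations $f(b)^{(i)}$, $f^\flat(m)^{(i)}$ appearing in $I^n_{C/B}$. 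In fact this comparison shows more: $I^n_{C/B}$ is generated by $I^n_{C/A}$ together with the set $\{f(b)^{(i)},\,f^\flat(m)^{(i)}:b\in B,\,m\in M_B,\,i=1,\ldots,n\}$, so that the kernel of the surjection is exactly the image $K$ in $\HS^n_{C/A}(M_C/M_A)$ of the ideal generated by these elements.

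It then remains to identify $K$ with the ideal $\HS^n_{B/A}(M_B/M_A)^+\cdot\HS^n_{C/A}(M_C/M_A)$ appearing on the left. Here I would use the functorial morphism $\varphi:\HS^n_{B/A}(M_B/M_A)\to\HS^n_{C/A}(M_C/M_A)$ induced by $f$, which sends $b^{(i)}\mapsto f(b)^{(i)}$ and $m^{(i)}\mapsto f^\flat(m)^{(i)}$. The key observation is that, under the natural grading in which $b^{(i)}$ and $m^{(i)}$ have weight $i$ and the degree-zero part is $B$, the positive-degree ideal $\HS^n_{B/A}(M_B/M_A)^+$ is generated precisely by the weight $\geq 1$ generators $\{b^{(i)},m^{(i)}:i=1,\ldots,n\}$. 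Applying $\varphi$ and extending to an ideal of $\HS^n_{C/A}(M_C/M_A)$ then produces exactly the generators of $K$, so the two ideals agree and the sequence is exact as a sequence of $C$-modules, the left map being the inclusion of an ideal and the right map the quotient.

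The step I expect to require the most care is this last identification, and in particular the claim that $\HS^n_{B/A}(M_B/M_A)^+$ is generated by its homogeneous generators of positive weight; this must be checked against the grading described in the remark following Definition \ref{def:hsdef}, keeping track of the fact that the degree-zero part is $B$ (since $b^{(0)}=b$ and $m^{(0)}=1$), so that an element such as $b^{(2)}$ is itself one of the generators rather than something that must be built from lower-weight pieces. As an independent confirmation one could instead argue functorially: a morphism out of $\HS^n_{C/A}(M_C/M_A)/K$ is, by the First Universal Property, a higher log derivation of $C$ over $A$ annihilating all $f(b)^{(i)}$ and $f^\flat(m)^{(i)}$ with $i\geq 1$, which by condition v of Definition \ref{def:logder} is precisely a higher log derivation of $C$ over $B$, matching the morphisms out of $\HS^n_{C/B}(M_C/M_B)$.
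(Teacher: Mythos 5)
Your proposal is correct and follows essentially the same route as the paper's proof, which likewise obtains right-exactness from the inclusion $I^n_{C/A}\subset I^n_{C/B}$ and middle-exactness by observing that the extra generators of $I^n_{C/B}$ are precisely the images of the positive-weight generators of $\HS^n_{B/A}(M_B/M_A)^+$. Your version merely spells out the details that the paper's three-sentence argument leaves implicit, and the closing functorial cross-check via condition v of the definition of higher log derivations is a sound, if redundant, confirmation.
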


\begin{proof}
The first term is an ideal and therefore the sequence is exact on the left. The inclusion of ideals $I^n_{C/A}\subset I^n_{C/B}$ makes the sequence exact on the right.
By Definition \ref{def:hsdef}, $I^n_{C/B}\setminus I^n_{C/A}$ forms a generating set of $\HS^n_{B/A}(M_B/M_A)^+$. Therefore the sequence is exact in the middle.
\end{proof}

\begin{prop}\label{prop:boringpoly}
Let $A$ be a log ring and $B=A[x_i]_{i\in I}$ a polynomial ring with log structure $M_A$. Then
\[
\HS^n_{B/A}(M_B/M_A)\cong A[d_q x_i]_{q=0,\ldots,n;i\in I}
\]
\end{prop}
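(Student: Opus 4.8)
The plan is to apply the Second Universal Property (Corollary \ref{cor:functor}) together with Yoneda's lemma, reducing the claim to a comparison of two corepresentable functors. First I record the log structures in play. Saying that $B=A[x_i]_{i\in I}$ carries ``the log structure $M_A$'' means that $A\to B$ is strict, so by Example \ref{ex:strict} one has $M_B\cong M_A\oplus_{A^*}B^*$; correspondingly the log structure on the right-hand ring $A[d_qx_i]$ is the strict pushforward $M_A\oplus_{A^*}(A[d_qx_i])^*$, and the claimed isomorphism is one of log algebras over $A$. Thus both $\HS^n_{B/A}(M_B/M_A)$ and $A[d_qx_i]$ receive the induced (pushforward) log structure, and it suffices to produce an isomorphism of the underlying objects compatible with these.

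The key observation is that because $B$ is a polynomial algebra with the strict log structure, a morphism of log algebras over $A$ from $B$ to any log algebra $S$ over $A$ is the same datum as a homomorphism of $A$-algebras from $B$ to the underlying ring of $S$. Indeed the monoid part $M_B=M_A\oplus_{A^*}B^*\to M_S$ is forced by the universal property of the amalgamated sum: its restriction to $M_A$ must equal $g_S^\flat$, and its restriction to $B^*$ is determined by the underlying ring map on units composed with the inclusion $S^*\cong\alpha_S^{-1}(S^*)\hookrightarrow M_S$; these agree on the amalgamating subgroup $A^*$, so they glue to a unique morphism $M_B\to M_S$. Hence no choice beyond the ring homomorphism is involved.

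Applying this with $S=R[t]/t^{n+1}$ identifies $\Hom_A(B,R[t]/t^{n+1})$ with the set of $A$-algebra homomorphisms $B\to R[t]/t^{n+1}$, which, since $B=A[x_i]$, is just the set of arbitrary assignments $x_i\mapsto\sum_{q=0}^n r_{q,i}t^q$, i.e.\ with $\prod_{i\in I}\prod_{q=0}^n R$. On the other side, the same principle applied to the strict polynomial log algebra $A[d_qx_i]$ identifies $\Hom_A(A[d_qx_i],R)$ with arbitrary assignments of the $d_qx_i$ to elements of $R$, i.e.\ again with $\prod_{i\in I}\prod_{q=0}^n R$. Under the bijection of Corollary \ref{cor:functor} a morphism out of the Hasse--Schmidt ring sends $x_i$ to $\sum_q\phi(d_qx_i)t^q$, so the two identifications are compatible and natural in $R$. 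Therefore the functors $R\mapsto\Hom_A(A[d_qx_i],R)$ and $R\mapsto\Hom_A(\HS^n_{B/A}(M_B/M_A),R)$ are naturally isomorphic, and Yoneda's lemma yields $\HS^n_{B/A}(M_B/M_A)\cong A[d_qx_i]_{q=0,\ldots,n;i\in I}$ as log algebras over $A$.

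The main point to get right --- and the step I would treat most carefully --- is the reduction in the second paragraph: that for the strict polynomial log algebra the log-structure datum of a morphism is automatic and involves no extra choices. This is exactly where the hypothesis that $M_B$ is the pushforward $M_A\oplus_{A^*}B^*$ (equivalently, that $A\to B$ is strict) is used, and it is what makes both $\Hom$-sets collapse to the same product of copies of $R$. Everything else is the bookkeeping of matching generators and checking naturality, which the universal properties already package for us. Alternatively one could argue directly via the First Universal Property, Theorem \ref{thm:func}, by building the universal higher log derivation on $A[d_qx_i]$ with $d_qx_i$ recording the value $D_q(x_i)$ and verifying the divided Leibniz identity and the monoid axioms by hand, but the route above avoids those computations.
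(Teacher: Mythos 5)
Your proposal is correct and follows essentially the same route as the paper: the paper's proof likewise invokes Corollary \ref{cor:functor} and the observation that a log-algebra morphism out of the strict polynomial log algebra $A[x_i]$ is freely determined by the images of the $x_i$, concluding by the usual corepresentability (Yoneda) argument. Your second paragraph merely spells out in detail the freeness claim that the paper states in one line.
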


\begin{proof}
Let $R$ be an $A$ log algebra. Then any homomorphism of log rings $\phi: A[x_i]_{i\in I}\rightarrow R[t]/t^{n+1}$ is uniquely determined by the image of the $x_i$ which is free. Therefore
\[
\Hom_A(A[x_i]_{i\in I}, R[t]/t^{n+1})\cong\Hom_A(A[d_q x_i]_{q=0,\ldots,n;i\in I}, R)
\]
and by \ref{cor:functor}
\[
\HS^n_{B/A}(M_B/M_A)\cong A[d_q x_i]_{q=0,\ldots,n;i\in I}
\]
\end{proof}

\begin{prop}\label{prop:loc1}
Let $f: A \rightarrow B$ be a morphism of log algebras and $S\subset B$ a multiplicative subset. Then the natural map
\[
B\rightarrow S^{-1}B
\]
induces an isomorphism
\begin{equation}\label{eq:localization}
S^{-1}\HS^n_{B/A}(M_B/M_A)\rightarrow \HS^n_{S^{-1}B/A}(S^{-1}M_B/M_A)
\end{equation}
of rings.
\end{prop}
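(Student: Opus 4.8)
The plan is to prove the isomorphism by a functor-of-points argument, showing that both sides corepresent the same functor on log algebras over $A$. First I would construct the map \eqref{eq:localization} explicitly. Functoriality of the log Hasse-Schmidt construction in its first argument, applied to the localization morphism $\varphi_S\colon B\to S^{-1}B$, yields a morphism of log algebras $\HS^n_{B/A}(M_B/M_A)\to\HS^n_{S^{-1}B/A}(S^{-1}M_B/M_A)$ over $B$. Each $s\in S$, viewed as the degree-zero element $d_0 s$, maps to $s\in S^{-1}B\subseteq\HS^n_{S^{-1}B/A}(S^{-1}M_B/M_A)$, which is a unit; hence by the universal property of the localized log algebra (Proposition \ref{quotientuniv}) this morphism factors uniquely through $\varphi_S$, producing the desired map \eqref{eq:localization}. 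It then remains to show this map is an isomorphism, which I would do by checking it induces a bijection on $\Hom_A(-,R)$ for every log algebra $R$ over $A$ and invoking Yoneda.

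For the right-hand side, Corollary \ref{cor:functor} gives a natural bijection
\[
\Hom_A(\HS^n_{S^{-1}B/A}(S^{-1}M_B/M_A),R)\;\cong\;\Hom_A(S^{-1}B,R[t]/t^{n+1}),
\]
and Proposition \ref{quotientuniv} identifies the target with the set of those $\phi\in\Hom_A(B,R[t]/t^{n+1})$ carrying $S$ into $\bigl(R[t]/t^{n+1}\bigr)^*$. For the left-hand side, Proposition \ref{quotientuniv} (applied to $\varphi_S$) identifies $\Hom_A(S^{-1}\HS^n_{B/A}(M_B/M_A),R)$ with the set of those $\psi\in\Hom_A(\HS^n_{B/A}(M_B/M_A),R)$ carrying $S$ into $R^*$, while Corollary \ref{cor:functor} again presents $\Hom_A(\HS^n_{B/A}(M_B/M_A),R)\cong\Hom_A(B,R[t]/t^{n+1})$. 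Both Hom-sets are thus realized, naturally in $R$, as subsets of the single set $\Hom_A(B,R[t]/t^{n+1})$, and one checks that \eqref{eq:localization} induces exactly the corresponding inclusion under these identifications, so it suffices to show the two subsets coincide.

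The crux is therefore an elementary comparison of the two conditions. Under the bijection of Corollary \ref{cor:functor}, a morphism $\psi$ corresponds to the higher log derivation $(D_0,\delta_0,\dots,D_n,\delta_n)$ with $D_i=\psi\circ d_i$, and the associated $\phi\colon B\to R[t]/t^{n+1}$ sends $b\mapsto D_0(b)+D_1(b)t+\cdots+D_n(b)t^n$, whose constant term is $D_0(b)$. Since $t$ is nilpotent, an element of $R[t]/t^{n+1}$ is a unit if and only if its constant term lies in $R^*$; consequently $\phi(s)\in\bigl(R[t]/t^{n+1}\bigr)^*$ if and only if $D_0(s)=\psi(d_0 s)=\psi(s)\in R^*$. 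Hence the conditions ``$\phi(S)\subseteq\bigl(R[t]/t^{n+1}\bigr)^*$'' and ``$\psi(S)\subseteq R^*$'' cut out the same subset of $\Hom_A(B,R[t]/t^{n+1})$, naturally in $R$, so \eqref{eq:localization} is a natural isomorphism of functors on log algebras over $A$ and therefore an isomorphism; in particular it is an isomorphism of rings. I expect the only genuine subtlety to be the bookkeeping of log structures under Proposition \ref{quotientuniv}, namely verifying that inverting $S$ at the level of rings automatically inverts the corresponding would-be units in the monoid, but this is precisely what the universal property of the amalgamated sum underlying the localized log algebra (Definition \ref{def:monoidlocal}) guarantees.
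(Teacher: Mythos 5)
Your proposal is correct and follows essentially the same route as the paper: both arguments show the two sides corepresent the same functor by combining Corollary \ref{cor:functor} with the universal property of localized log algebras (Proposition \ref{quotientuniv}) and concluding by Yoneda. Your explicit justification that $\phi(s)$ is a unit of $R[t]/t^{n+1}$ exactly when its constant term $\psi(s)$ is a unit of $R$ (via nilpotence of $t$) is a point the paper passes over silently, but it is the same proof.
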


\begin{proof}
Let $R$ be any log algebra over $A$ and let $f:\HS^n_{B/A}(M_B/M_A)\rightarrow R$ be a morphism of log algebras over $A$ such that $f(s)\in R^*$ for all $s\in S$. By Corollary \ref{cor:functor} there exists a unique morphism of log algebras $B\rightarrow R[t]/t^{n+1}$ which also sends $s$ to a unit. By the universal property of localized log algebras there exists a unique morphism $S^{-1}B\rightarrow R[t]/t^{n+1}$, which by Corollary \ref{cor:functor} corresponds to a unique morphism $\HS^n_{S^{-1}B/A}(S^{-1}M_B/M_A)\rightarrow R$. Therefore every morphism $f:\HS^n_{B/A}(M_B/M_A)\rightarrow R$ with $f(s)\in R^*$ for all $s\in S$ factors uniquely through $\HS^n_{B/A}(M_B/M_A)\rightarrow \HS^n_{S^{-1}B/A}(S^{-1}M_B/M_A)$. Since $R$ was arbitrary, by Proposition \ref{quotientuniv}
\[
\HS^n_{S^{-1}B/A}(S^{-1}M_B/M_A) \cong S^{-1}\HS^n_{B/A}(M_B/M_A)
\]
\end{proof}

The following corollary will allow for much simpler constructions of log Hasse-Schmidt rings in many instances.

\begin{cor}
Let $B$ be a log algebra over $A$ where $B$ is an integral domain and $A$ has trivial log structure. Suppose additionally that $\alpha_B:M_B\rightarrow B$ is injective, i.e., $M_B$ is a submonoid of $B$. Then there exists an inclusion
\[
\HS^n_{B/A}(M_B/A^*) \hookrightarrow M_B^{-1}\HS^n_{B/A}
\]
\end{cor}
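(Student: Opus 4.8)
The plan is to produce the inclusion as the log algebra map classifying a specific higher log derivation, and then to detect its injectivity by localizing at $M_B$. First I would construct a morphism $\phi\colon \HS^n_{B/A}(M_B/A^*)\to M_B^{-1}\HS^n_{B/A}$ using the First Universal Property (Theorem \ref{thm:func}). Since $A$ has trivial log structure and $\HS^n_{B/A}=\HS^n_{B/A}(B^*/A^*)$ carries the trivial log structure, so does its localization $M_B^{-1}\HS^n_{B/A}$, and in it every $\alpha_B(m)$ is a unit. I can therefore define a higher log derivation $(d_0,\delta_0,\ldots,d_n,\delta_n)$ from $(B,M_B)$ into $M_B^{-1}\HS^n_{B/A}$ by taking the $d_i$ to be the universal trivial-log derivation followed by localization and setting
\[
\delta_i(m):=\frac{d_i(\alpha_B(m))}{\alpha_B(m)}.
\]
Checking Definition \ref{def:logder} is routine: property iii holds by construction, $\delta_0=1$ because $d_0(\alpha_B(m))=\alpha_B(m)$, and the divided comultiplication iv for $\delta$ follows from the divided Leibniz identity for the $d_i$ together with $\alpha_B(m+p)=\alpha_B(m)\alpha_B(p)$ after dividing through by $\alpha_B(m)\alpha_B(p)$; property v uses that $d_i$ kills the image of $A$ for $i\geq 1$. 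Theorem \ref{thm:func} then yields $\phi$ with $\phi(d_ib)=d_ib$ and $\phi(\partial_i m)=d_i(\alpha_B(m))/\alpha_B(m)$.

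Next I would reduce injectivity of $\phi$ to a torsion statement. Localizing at $M_B$ and applying Proposition \ref{prop:loc1} gives
\[
M_B^{-1}\HS^n_{B/A}(M_B/A^*)\cong \HS^n_{M_B^{-1}B/A}(M_B^{-1}M_B/A^*).
\]
In $M_B^{-1}B$ every $\alpha_B(m)$ is a unit, so the localized log structure $M_B^{-1}M_B$ is just the trivial log structure $(M_B^{-1}B)^*$; hence the right-hand side equals $\HS^n_{M_B^{-1}B/A}\cong M_B^{-1}\HS^n_{B/A}$, again by Proposition \ref{prop:loc1}. Under this identification the relation $d_i\alpha_B(m)=\alpha_B(m)\partial_i m$ sends $\partial_i m\mapsto d_i(\alpha_B(m))/\alpha_B(m)$, so $\phi$ is precisely the composite of the localization map $\lambda\colon \HS^n_{B/A}(M_B/A^*)\to M_B^{-1}\HS^n_{B/A}(M_B/A^*)$ with this isomorphism. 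Consequently $\ker\phi=\ker\lambda$ is the $M_B$-torsion submodule of $\HS^n_{B/A}(M_B/A^*)$, and, since $B$ is a domain with $\alpha_B$ injective (so the elements of $M_B$ are nonzero), the corollary is equivalent to the absence of $M_B$-torsion.

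The main obstacle is exactly this torsion-freeness, which is genuine: the analogous localization map for the non-log ring $\HS^n_{B/A}$ can fail to be injective (the Kähler differentials of a cusp have torsion), so the log structure must be what clears it. I would exploit the natural grading, in which $B$ is the degree-zero part and $M_B$ lies in degree zero, so the torsion submodule is homogeneous and it suffices to treat each graded component; there the defining relations $d_i(\alpha_B(m))=\alpha_B(m)\partial_i m$ let one absorb a factor of $\alpha_B(m)$, converting a putative torsion relation into an honest one, exactly the phenomenon of Example \ref{ex:bestexample} where dividing by $\alpha_B(m)$ turns $x^2(2\partial x-3\partial y)$ into $2\partial x-3\partial y$. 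Equivalently, and more cleanly, I would realize $\HS^n_{B/A}(M_B/A^*)$ directly as the $B$-subalgebra $S\subset M_B^{-1}\HS^n_{B/A}$ generated by the $d_ib$ and the $d_i(\alpha_B(m))/\alpha_B(m)$, and verify that $S$ satisfies the First Universal Property: a higher log derivation of $(B,M_B)$ extends over $S$ because the only relations among these generators arise from relations in $\HS^n_{B/A}$ by clearing the invertible factors $\alpha_B(m)$, and those map to zero precisely by axioms iii and iv. As a subring of the localization $M_B^{-1}\HS^n_{B/A}$, such an $S$ is automatically $M_B$-torsion-free, and the isomorphism $\HS^n_{B/A}(M_B/A^*)\cong S$ is the desired inclusion. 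Controlling the relations in this subring is the delicate point, and it is where the hypotheses that $B$ is a domain and $\alpha_B$ is injective carry the argument.
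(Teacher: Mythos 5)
Your route is essentially the paper's. The paper also produces the map as the localization morphism $\HS^n_{B/A}(M_B/A^*)\rightarrow M_B^{-1}\HS^n_{B/A}(M_B/A^*)$ followed by the chain of isomorphisms
\[
M_B^{-1}\HS^n_{B/A}(M_B/A^*)\cong \HS^n_{M_B^{-1}B/A}(M_B^{-1}M_B/A^*)\cong \HS^n_{M_B^{-1}B/A}((M_B^{-1}B)^*/A^*)\cong M_B^{-1}\HS^n_{B/A},
\]
i.e.\ Proposition \ref{prop:loc1} applied twice together with the observation that the localized log structure trivializes. Your explicit construction of $\phi$ via the higher log derivation with $\delta_i(m)=d_i(\alpha_B(m))/\alpha_B(m)$ is a correct and somewhat more concrete packaging of the same morphism, and your verification of Definition \ref{def:logder} for it is fine.

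The issue is the injectivity, and here your proposal and the paper diverge in an instructive way. The paper dispatches it in one sentence (``$B$ is integral and thus $\ldots$ is an inclusion''), which as stated only uses that $\alpha_B(M_B)$ consists of nonzerodivisors of $B$; that does not by itself rule out $\alpha_B(M_B)$-torsion in the $B$-algebra $\HS^n_{B/A}(M_B/A^*)$, and Example \ref{ex:bestexample} shows that the naive presentation of such a ring does carry exactly this kind of torsion. You correctly identify this as the real content of the corollary, which is to your credit. But you then leave it unproven: both of your strategies --- the graded torsion analysis, and realizing $\HS^n_{B/A}(M_B/A^*)$ as the subring $S\subset M_B^{-1}\HS^n_{B/A}$ generated by the $d_ib$ and $d_i(\alpha_B(m))/\alpha_B(m)$ and reverifying the First Universal Property for it --- terminate at the assertion that ``the only relations among these generators arise from relations in $\HS^n_{B/A}$ by clearing the invertible factors,'' which is precisely the statement that needs proof, and you acknowledge as much. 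In particular the place where the injectivity of $\alpha_B$ must actually do work (turning a multiplicative coincidence $\alpha_B(m)=\alpha_B(p)$ in the domain $B$ into the identity $m=p$ in $M_B$, hence $\partial_i m=\partial_i p$, so that would-be torsion relations are already genuine relations) is gestured at but never executed. So as written your argument establishes exactly what the paper's does and no more; the step you flag as ``the delicate point'' remains a genuine gap in the proposal.
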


\begin{proof}
By hypothesis $B$ is integral and thus $\HS^n_{B/A}(M_B/A^*) \hookrightarrow M_B^{-1}\HS^n_{B/A}(M_B/A^*)$ is an inclusion. By Proposition \ref{prop:loc1} applied twice,
\[
\begin{split}
M_B^{-1}\HS^n_{B/A}(M_B/A^*) &\cong \HS^n_{M_B^{-1}B/A}(M_B^{-1}M_B/A^*)\\
&\cong \HS^n_{M^{-1}_B B/A}((M^{-1}_B B)^*/A^*)\\
&\cong  M_B^{-1}\HS^n_{B/A}
\end{split}
\]
\end{proof}

\begin{example}\label{ex:calcd}
Let $B=k[x,y]$, $C = k[x,y]/(x^2-y^3)$ be $k$-algebras with $\character{k} \neq 2,3$. Let all rings have trivial log structure. By Proposition \ref{prop:boringpoly}
\[
\HS^1_{B/k} \cong k[x,y,d_1x,d_1y]
\]
and by Theorem \ref{thm:construction}
\[
\HS^1_{C/k} \cong k[x,y,d_1x, d_1y]/(x^2-y^3, 2xd_1x-3y^2d_1y).
\]
Define $M_C$ to be the log structure generated by $x$ and $y$. Then
\[
\HS^1_{C/k^*}(M_C/k^*)\hookrightarrow k[x,y,d_1x, d_1y]_{xy}/(x^2-y^3, 2xd_1x-3y^2d_1y)
\]
and
\[
k[x,y,d_1x, d_1y]_{xy}/(x^2-y^3, 2xd_1x-3y^2d_1y) \cong k[x,y,d_1 x, d_1y]_{xy}/(x^2-y^3, 2\frac{d_1 x}{x}-3\frac{d_1 y}{y})
\]
Since $x$ and $y$ generate the log structure of $C$ and $x$ and $y$ generate $C$ over $k$,
\[
\begin{split}
\HS^1_{C/k^*}(M_C/k^*) &\cong k[x,y,\partial_1 x, \partial_1 y]/(x^2-y^3, 2\partial_1 x-3\partial_1 y)\\
&\cong k[x,y,\partial_1 x]/(x^2-y^3)
\end{split}
\]
\end{example}

\begin{prop}\label{prop:locbase} Let $f:A\rightarrow B$ be a morphism of log algebras, $S\subset A$ be a multiplicative subset, and suppose $f$ factors through the natural map $A\rightarrow S^{-1}A$. Then the identity map on $B[b^{(i)}, m^{(i)}]_{b\in B; m\in M_B; i=1,\ldots, n}$ induces an isomorphism
\[
\HS^n_{B/A}(M_B/M_A)\rightarrow \HS^n_{B/S^{-1}A}(M_B/M_{S^{-1}(A)})
\]
\end{prop}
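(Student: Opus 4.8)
The plan is to prove the stronger statement that the two defining ideals coincide inside a common ambient ring. By Definition \ref{def:hsdef} both $\HS^n_{B/A}(M_B/M_A)$ and $\HS^n_{B/S^{-1}A}(M_B/M_{S^{-1}A})$ are quotients of the \emph{same} polynomial ring $P := B[b^{(i)}, m^{(i)}]_{b\in B; m\in M_B; i=1,\ldots,n}$, since the generators $b^{(i)}$ and $m^{(i)}$ depend only on $B$ and $M_B$ and not on the base. Thus it suffices to show $I^n_{B/A} = I^n_{B/S^{-1}A}$ in $P$: the identity on $P$ then descends to a ring isomorphism, and since both log structures are formed as $M_B\oplus_{B^*}(\cdot)^*$ over the common quotient ring, it is in fact an isomorphism of log algebras. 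Writing $g:S^{-1}A\rightarrow B$ for the log morphism through which $f$ factors (so $f = g\circ\varphi_S$ and $f^\flat = g^\flat\circ\varphi_S^\flat$), a comparison of the six generating sets shows that the sum, product, monoid-product, and $\alpha$-relations are literally identical in both cases; the only discrepancy lies in the base relations, which for $I^n_{B/A}$ are $\{f(a)^{(i)}\}_{a\in A}$ and $\{f^\flat(m)^{(i)}\}_{m\in M_A}$, and for $I^n_{B/S^{-1}A}$ are $\{g(s)^{(i)}\}_{s\in S^{-1}A}$ and $\{g^\flat(n)^{(i)}\}_{n\in M_{S^{-1}A}}$.

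The inclusion $I^n_{B/A}\subseteq I^n_{B/S^{-1}A}$ is immediate: since $f(a) = g(\varphi_S(a))$ with $\varphi_S(a)\in S^{-1}A$ and $f^\flat(m) = g^\flat(\varphi_S^\flat(m))$ with $\varphi_S^\flat(m)\in M_{S^{-1}A}$, each base generator of $I^n_{B/A}$ is already a base generator of $I^n_{B/S^{-1}A}$.

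The substance is the reverse inclusion, for which I work inside $\HS^n_{B/A}(M_B/M_A)$ with its universal derivation $(d_0,\partial_0,\ldots,d_n,\partial_n)$, so that a generator $x^{(i)}$ lies in $I^n_{B/A}$ precisely when its image $d_i(x)$ (resp. $\partial_i(x)$ on monoid generators) vanishes. For the ring part, fix $s\in S^{-1}A$ and write $s = \varphi_S(a)\varphi_S(t)^{-1}$ with $a\in A$, $t\in S$, so that $g(s)\,f(t) = f(a)$ in $B$. As $\varphi_S(t)$ is a unit in $S^{-1}A$, we get $f(t) = g(\varphi_S(t))\in B^*$, hence $f(t)$ is a unit in $\HS^n_{B/A}(M_B/M_A)$, while $d_k(f(t)) = 0$ for $k\geq 1$ and $d_i(f(a)) = 0$ for $i\geq 1$ because $a,t\in A$. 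Applying $d_i$ to $g(s)f(t) = f(a)$ via the divided Leibniz identity (Equation \ref{eq:product}), every term carrying a positive-order factor on $f(t)$ drops out, leaving $d_i(g(s))\,f(t) = 0$; cancelling the unit $f(t)$ gives $d_i(g(s)) = 0$, i.e. $g(s)^{(i)}\in I^n_{B/A}$.

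For the monoid part, by Definition \ref{def:monoidlocal} an element $n\in M_{S^{-1}A} = M_A\oplus_{(\varphi_S\circ\alpha_A)^{-1}((S^{-1}A)^*)}(S^{-1}A)^*$ decomposes as $n = \varphi_S^\flat(m) + \nu$ with $m\in M_A$ and $\nu$ the image of some unit $u\in(S^{-1}A)^*$, so $g^\flat(n) = f^\flat(m) + g^\flat(\nu)$ with $\alpha_B(g^\flat(\nu)) = g(u)\in B^*$. The $\alpha$-relation of Definition \ref{def:hsdef} then gives $g(u)\,\partial_i(g^\flat(\nu)) = d_i(g(u))$, which vanishes by the ring part applied to $u\in S^{-1}A$; cancelling the unit $g(u)$ yields $\partial_i(g^\flat(\nu)) = 0$. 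Applying $\partial_i$ to $g^\flat(n) = f^\flat(m) + g^\flat(\nu)$ through the monoid-product relation (Equation \ref{eq:monoidprod}), together with $\partial_k(f^\flat(m)) = 0$ for $k\geq 1$ and $\partial_0(f^\flat(m)) = 1$, collapses the sum to $\partial_i(g^\flat(\nu)) = 0$, so $g^\flat(n)^{(i)}\in I^n_{B/A}$. This gives $I^n_{B/S^{-1}A}\subseteq I^n_{B/A}$, hence equality of the two ideals and the claimed isomorphism. The main obstacle is exactly this monoid part: one must unwind the amalgamated-sum description of $M_{S^{-1}A}$ and use the $\alpha$-relation to transport the ring-level vanishing to the monoid generators, throughout relying on the fact that the images of $S$ and of the units of $S^{-1}A$ are genuinely invertible in the Hasse-Schmidt ring so that the cancellations are legitimate.
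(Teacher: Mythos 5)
Your proposal is correct and takes essentially the same approach as the paper: both reduce to showing the two defining ideals coincide, with the key step being to apply the divided Leibniz identity to $f(a) = f(t)\,g(\varphi_S(a)\varphi_S(t)^{-1})$ and cancel the unit $f(t)$. The only difference is that you spell out the monoid part via the amalgamated-sum decomposition of $M_{S^{-1}A}$ and the $\alpha$-relation, which the paper compresses into ``an identical argument follows.''
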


\begin{proof}
Let $f':S^{-1}A\rightarrow B$ be the factored morphism, $a\in A$, $s\in S$ and $k\in\{1,\ldots, m\}$. Then
\[
0 = d_kf(a) = d_k(f(s)f'(s^{-1}a)) = \sum_{i+j=k} d_if(s) d_j f'(s^{-1}a) = f(s)d_kf'(s^{-1}a)
\]
in $\HS^n_{B/A}(M_B/M_A)$, by induction on $k$. However, $f(s)$ is invertible on $B$, so $d_kf'(s^{-1}a)$ vanishes in $\HS^n_{B/A}(M_B/M_A)$. An identical argument follows for $a\in M_A$ and $s\in S$. Thus the ideals are equal and the morphism induces an isomorphism.
\end{proof}

\begin{lem}\label{stalksheaf}
Let $f:X\rightarrow Y$ be a morphism of quasi-coherent affine log schemes, with $X = \spec B$ and $Y=\spec A$ such that $M_X\cong \widetilde{M_X(X)}$ and $M_Y\cong \widetilde{M_Y(Y)}$. Then for every $p\in X$ there exists a morphism
\[
\HS^n_{B/A}(M_X/M_Y)\rightarrow \HS^n_{B_p/A_{f(p)}}(M_{X,p},M_{Y,f(p)})
\]
uniquely determined by the localization map $B\rightarrow B_p$.
\end{lem}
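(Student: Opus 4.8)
The plan is to realize the desired morphism as the composite of a localization of the source (keeping the base $A$ fixed) followed by a localization of the base, each of which is handled by a result already in hand. Write $\phi\colon A\to B$ for the underlying ring map of $f$ and set $q=f(p)=\phi^{-1}(p)$.

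The first preliminary step is to pin down the stalk log structures. The hypotheses $M_X\cong\widetilde{M_X(X)}$ and $M_Y\cong\widetilde{M_Y(Y)}$, together with Proposition \ref{monoidstalk}, identify the stalks $M_{X,p}$ and $M_{Y,f(p)}$ with the localized log structures $(M_B)_p$ and $(M_A)_q$ of Definition \ref{def:monoidlocal}. Consequently $(B_p,M_{X,p})$ is precisely the localized log algebra of $B$ at $p$, and likewise $(A_{f(p)},M_{Y,f(p)})$ is the localized log algebra of $A$ at $q$; this is what allows the target to be read as a genuine log Hasse--Schmidt ring of localized log algebras.

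With this in place I would proceed in two steps. First, the localization map $\ell\colon B\to B_p$ is a morphism of log algebras over $A$, its monoid component being the canonical map $\varphi^\flat$ of Definition \ref{def:monoidlocal}. By the functoriality of the log Hasse--Schmidt construction for morphisms of log algebras over $A$ (the Proposition stated after Definition \ref{def:hsdef}), $\ell$ induces a morphism of log algebras over $A$
\[
\HS^n_{B/A}(M_X/M_Y)\longrightarrow \HS^n_{B_p/A}(M_{X,p}/M_Y).
\]
Second, the composite $A\xrightarrow{\phi}B\xrightarrow{\ell}B_p$ sends $A\setminus q$ into $B_p^*$: if $s\notin q$ then $\phi(s)\notin p$, so $\ell(\phi(s))$ is a unit. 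Hence by the universal property of localized log algebras (Proposition \ref{quotientuniv}) this composite factors through $A\to A_{f(p)}$, and Proposition \ref{prop:locbase} (applied with the base being localized, $S=A\setminus q$) gives an isomorphism
\[
\HS^n_{B_p/A}(M_{X,p}/M_Y)\cong \HS^n_{B_p/A_{f(p)}}(M_{X,p}/M_{Y,f(p)}).
\]
Composing produces the claimed morphism, and since every ingredient is canonically determined by $\ell$, so is the composite, which yields the uniqueness clause.

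The work here is almost entirely bookkeeping rather than a single hard step; the point to be careful about is keeping the two localizations separate, since the available functoriality statement fixes the base $A$ whereas Proposition \ref{prop:locbase} is exactly the tool for moving the base from $A$ to $A_{f(p)}$. The one genuine input one must not skip is the stalk identification of the second paragraph: without $M_X\cong\widetilde{M_X(X)}$ and $M_Y\cong\widetilde{M_Y(Y)}$, the stalk monoids $M_{X,p}$ and $M_{Y,f(p)}$ need not coincide with the localized log structures, and then neither Proposition \ref{monoidstalk} nor Proposition \ref{prop:locbase} would apply on the nose.
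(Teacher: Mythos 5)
Your proposal is correct and follows essentially the same route as the paper: localize the source over the fixed base $A$ via functoriality, then move the base from $A$ to $A_{f(p)}$ via Proposition \ref{prop:locbase}, and identify the stalk monoids with the localized log structures via Proposition \ref{monoidstalk}. The only difference is that you spell out the verification that $A\setminus f(p)$ lands in $B_p^*$ (so that Proposition \ref{prop:locbase} applies), a detail the paper leaves implicit.
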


\begin{proof}
Let $M_B = M_X(X)$, $M_A = M_Y(Y)$, $\alpha_B:M_B\rightarrow B$ and $\alpha_A:M_A\rightarrow A$ be the monoids and morphisms making $B$ and $A$ into log algebras. Then localization $B\rightarrow B_p$ induces a morphism
\[
\HS^n_{B/A}(M_B/M_A)\rightarrow \HS^n_{B_p/A}((M_B)_p, M_A).
\]
However, by Proposition \ref{prop:locbase}
\[
\HS^n_{B_p/A}((M_B)_p, M_A)\cong \HS^n_{B_p/A_{f(p)}}((M_B)_p, (M_A)_{f(p)})
\]
and by Proposition \ref{monoidstalk} $M_{X,p} \cong (M_B)_{p}$, $M_{Y,f(p)} \cong (M_A)_{f(p)}$ giving the desired morphism.
\end{proof}

Now that we have the tools necessary,
we can construct the sheaf of higher log differentials. The useful case will be on log schemes with quasi-coherent log structure, but the sheaf can be defined for arbitrary log schemes.

\begin{mydef}
Let $f:X\rightarrow Y$ be a morphism of log schemes. Define the \textit{log Hasse-Schmidt sheaf of order n}, denoted $\HS^n_{X/Y}(M_X/M_Y)$, on each open set $U\subset X$ to be the set of functions
\[
s:U\rightarrow \coprod_{p\in U} \HS^n_{\Oc_{X,p}/\Oc_{Y,f(p)}}(M_{X,p}/M_{Y,f(p)})
\]
such that
\begin{enumerate}
\item[i.] for every $p\in U$, $s(p)\in \HS^n_{\Oc_{X,p}/\Oc_{Y,f(p)}}(M_{X,p}/M_{Y,f(p)})$; and

\item[ii.] for every $p\in U$ there exists an open neighborhood $V\subset U$ containing $p$, an open affine neighborhood $W\subset Y$ containing $f(p)$, and a $t\in \HS^n_{\Oc_X(V)/\Oc_Y(W)}(M_X(V)/M_Y(W))$ such that $V\subset f^{-1}(W)$ and the image of $t$ under the morphism
    \[
    \HS^n_{\Oc_X(V)/\Oc_Y(W)}(M_X(V)/M_Y(W))\rightarrow \HS^n_{\Oc_{X,p}/\Oc_{Y,f(p)}}(M_{X,p}/M_{Y,f(p)})
    \]
    defined in Lemma \ref{stalksheaf} is equal to $s(p)$ for every $p\in V$.

\end{enumerate}
If the log structures $M_X$ and $M_Y$ are trivial the notation will be shortened to $\HS^n_{X/Y}$.
\end{mydef}

\begin{thm}\label{qcsheaf}
Let $f:X\rightarrow Y$ be a morphism of quasi-coherent log schemes. Then $\HS^n_{X/Y}(M_X/M_Y)$ is a quasi-coherent sheaf of $\Oc_X$-algebras.
\end{thm}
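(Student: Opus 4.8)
The plan is to verify quasi-coherence locally on $X$, since being a quasi-coherent sheaf of $\Oc_X$-algebras is a condition that may be checked on an affine cover. So it suffices to produce one affine cover of $X$ on which $\HS^n_{X/Y}(M_X/M_Y)$ is isomorphic to the ordinary sheafification of a single log Hasse-Schmidt ring, viewed as a sheaf of algebras. Because $f:X\rightarrow Y$ is a morphism of schemes and both $M_X$ and $M_Y$ are quasi-coherent, I can cover $X$ by affine opens $\spec B$ for which there is an affine open $\spec A\subseteq Y$ with $f(\spec B)\subseteq\spec A$, and such that $M_X|_{\spec B}\cong\widetilde{M_B}$ and $M_Y|_{\spec A}\cong\widetilde{M_A}$ for pre-log structures $\alpha_B:M_B\rightarrow B$ and $\alpha_A:M_A\rightarrow A$. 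Fixing such a pair, set $N:=\HS^n_{B/A}(M_B/M_A)$, a $B$-algebra, and the goal reduces to showing $\HS^n_{X/Y}(M_X/M_Y)|_{\spec B}\cong\widetilde{N}$, the usual quasi-coherent $\Oc_{\spec B}$-algebra attached to $N$.

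The core of the argument is a stalk computation. For $p\in\spec B$ put $S=B\setminus p$. Combining Proposition \ref{prop:loc1} (localizing the total log algebra), Proposition \ref{prop:locbase} (passing from $A$ to $A_{f(p)}$ in the base, using that the localization $B\rightarrow B_p$ factors the structure map through $A_{f(p)}$), and Proposition \ref{monoidstalk} (which identifies the stalks $M_{X,p}\cong(M_B)_p$ and $M_{Y,f(p)}\cong(M_A)_{f(p)}$), I obtain a natural isomorphism
\[
N_p \;=\; S^{-1}\HS^n_{B/A}(M_B/M_A) \;\cong\; \HS^n_{\Oc_{X,p}/\Oc_{Y,f(p)}}(M_{X,p}/M_{Y,f(p)}).
\]
By the definition of $\HS^n_{X/Y}(M_X/M_Y)$ together with Lemma \ref{stalksheaf} (whose transition maps mirror the construction of the structure sheaf), the right-hand side is exactly the stalk of $\HS^n_{X/Y}(M_X/M_Y)$ at $p$, while the left-hand side is the stalk of $\widetilde{N}$ at $p$.

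With the stalks matched, I would construct the comparison morphism and show it is an isomorphism. A section of $\widetilde{N}$ over a basic open $D(g)$ is an element of $N_g\cong\HS^n_{B_g/A}(\ldots)$ by Proposition \ref{prop:loc1}; since $B_g=\Oc_X(D(g))$ and $f(D(g))\subseteq\spec A$, such an element is precisely a datum of the form demanded by condition ii in the definition of $\HS^n_{X/Y}(M_X/M_Y)$, namely with $V=D(g)$ and $W=\spec A$. This furnishes a morphism of sheaves $\widetilde{N}\rightarrow\HS^n_{X/Y}(M_X/M_Y)|_{\spec B}$, and by the displayed isomorphism it is the identity on every stalk, hence an isomorphism of sheaves of $\Oc_{\spec B}$-algebras. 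Running over the cover gives quasi-coherence globally.

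The step I expect to be the main obstacle is checking that the two ``locally represented by a section over an affine'' conditions genuinely coincide, i.e. that passing to the sheaf introduces no extra sections. Here one must be careful in light of Example \ref{badmonoidsheaf}, where the sheafification of a monoid acquired sections beyond the naive amalgamated sum. The point that rescues the argument is that the present assertion concerns only the underlying $\Oc_X$-algebra structure, and for the genuine $B$-algebra $N$ localization behaves perfectly: Proposition \ref{prop:loc1} gives $N_g\cong\HS^n_{B_g/A}(\ldots)$ on the nose, so the restriction to $\spec B$ is the ordinary module-theoretic sheafification $\widetilde{N}$. The pathology of Example \ref{badmonoidsheaf} is confined to the log structure $M^n_{B/A}$, which is not part of the statement being proved; isolating this distinction cleanly is the crux of the verification.
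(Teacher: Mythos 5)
Your proposal is correct and follows essentially the same route as the paper's proof: restrict to a compatible pair of affine opens $\spec B\subseteq f^{-1}(\spec A)$ with quasi-coherent log structures, use Proposition \ref{prop:loc1} to identify sections of $\widetilde{\HS^n_{B/A}(M_B/M_A)}$ over basic opens $D(g)$ with $\HS^n_{B_g/A}(\cdots)$, build the comparison morphism into $\HS^n_{X/Y}(M_X/M_Y)$, and conclude via the stalk identification supplied by Propositions \ref{prop:loc1}, \ref{prop:locbase} and \ref{monoidstalk}. Your closing observation that the pathology of Example \ref{badmonoidsheaf} is confined to the monoid sheaf and does not affect the underlying $\Oc_X$-algebra is a useful clarification the paper leaves implicit, but it does not change the argument.
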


\begin{proof}
By construction $\HS^n_{X/Y}(M_X/M_Y)$ is a sheaf. Let $p\in X$ be arbitrary. By the quasi-coherence of $X$ and $Y$ there exist open affine sets $\spec A\subset Y$ containing $f(p)$ and $\spec B \subset f^{-1}(\spec A)$ containing $p$ such that $M_Y|_{\spec A} \cong \widetilde{P_A}$ and $M_X|_{\spec B} \cong \widetilde{P_B}$ for some log structures $\alpha_A:P_A\rightarrow A$ and $\alpha_B:P_B\rightarrow B$. We may assume $\Gamma(\spec A, \widetilde{P_A}) = P_A$ and $\Gamma(\spec B, \widetilde{P_B}) = P_B$ after a suitable substitution. For each $g\in B$ we have an open affine $D(g)$, and
\[
\Gamma(D(g), \widetilde{\HS}^n_{B/A}(P_B/P_A)) \cong \HS^n_{B_g/A}((P_B)_g/P_A),
\]
where the isomorphism comes from the fact that $\widetilde{\HS}^n_{B/A}(P_B/P_A)$ is quasi-coherent, and Proposition \ref{prop:loc1}. So the natural map
\[
\HS^n_{B_g/A}((P_B)_g/P_A)\rightarrow\HS^n_{B_g/A}(\widetilde{(P_B)_g}/\widetilde{P_A})
\]
defines a morphism
\[
\Gamma(D(g), \widetilde{\HS}^n_{B/A}(P_B/P_A))\rightarrow \Gamma(D(g),\HS^n_{B_g/A}(\widetilde{(P_B)_g}/\widetilde{P_A}))
\]
for every $D(g)$. Since this construction is compatible with localization and the $D(g)$ form a basis for the topology of $\spec B$ we get a morphism of sheaves
\[
\phi: \widetilde{\HS}^n_{B/A}(P_B/P_A)\rightarrow \HS^n_{X/Y}(M_X/M_Y)
\]
on $\spec B$. For any $q\in \spec B$ the stalk of $\HS^n_{X/Y}(M_X/M_Y)$ at $q$ is
\[
\HS^n_{\Oc_{X,q}/\Oc_{Y,f(q)}}(M_{X,q}/M_{Y,f(q)}),
\]
but by Proposition \ref{monoidstalk} and Definition \ref{def:monoidlocal}
\[
\HS^n_{\Oc_{X,q}/\Oc_{Y,f(q)}}(M_{X,q}/M_{Y,f(q)}) \cong \HS^n_{B_q/A_{f(q)}}((P_B)_q, (P_A)_{f(q)}).
\]
By Propositions \ref{prop:loc1} and \ref{prop:locbase}
\[
\left(\widetilde{\HS}^n_{B/A}(P_B/P_A)\right)_q \cong \HS^n_{B_q/A_{f(q)}}((P_B)_q, (P_A)_{f(q)}),
\]
so $\phi$ induces an isomorphism of stalks. Since $q$ was arbitrary $\phi$ is an isomorphism of sheaves on $\spec B$, and since $p$ was arbitrary $\HS^n_{X/Y}(M_X/M_Y)$ is quasi-coherent.
\end{proof}

\begin{prop}\label{coherentdiff}
Let $f:X\rightarrow Y$ be a morphism of noetherian coherent log schemes, and $f$ locally of finite type. Then $\Omega_{X/Y}(M_X/M_Y)$ is a coherent sheaf of $\Oc_X$-modules.
\end{prop}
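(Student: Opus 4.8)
The plan is to reduce the statement to the affine, algebraic fact recorded in Remark \ref{rem:omegafg} and then use the noetherian hypothesis to promote ``finite type'' to ``coherent''. Coherence is local on $X$, so it suffices to produce, for each $p\in X$, an affine neighborhood on which $\Omega_{X/Y}(M_X/M_Y)$ is the sheafification of a finitely generated module. Recall from the remark following the isomorphism $\HS^1_{B/A}(M_B/M_A)\cong\Sym\Omega_{B/A}(M_B/M_A)$ that $\Omega_{X/Y}(M_X/M_Y)$ is the degree $1$ graded component of $\HS^1_{X/Y}(M_X/M_Y)$, which is quasi-coherent by Theorem \ref{qcsheaf}. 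Since the grading on the local rings $\HS^1_{B_p/A}$ is respected by the localization maps of Lemma \ref{stalksheaf}, extracting the degree $1$ piece preserves quasi-coherence; hence $\Omega_{X/Y}(M_X/M_Y)$ is already quasi-coherent and only its local finite generation remains to be checked.

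Next I would set up the affine patches. Fix $p\in X$. Because both log schemes are coherent, I can pick an affine open $\spec A\subset Y$ containing $f(p)$ with $M_Y|_{\spec A}\cong\widetilde{P_A}$ for a finitely generated monoid $P_A$, and an affine open $\spec B\subset f^{-1}(\spec A)$ containing $p$ with $M_X|_{\spec B}\cong\widetilde{P_B}$ for a finitely generated monoid $P_B$. Using that $f$ is locally of finite type, after shrinking to a common principal open $D(g)$ I may also assume $B$ is finitely generated over $A$; this survives localization, since $(P_B)_g$ is still finitely generated over $(B_g)^*$ and $B_g$ is still finitely generated over $A$. Let $M_B=P_B\oplus_{\alpha^{-1}(B^*)}B^*$ be the log structure associated to the pre-log structure $P_B$, so that $\widetilde{M_B}=\widetilde{P_B}$; then $M_B$ is finitely generated over $B^*$ and $B$ is a finitely generated $A$ log algebra. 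Proposition \ref{prop:fg} therefore applies and shows $\HS^1_{B/A}(M_B/M_A)$ is a finitely generated log algebra over $B$, whence by Remark \ref{rem:omegafg} its degree $1$ component $\Omega_{B/A}(M_B/M_A)$ is a finitely generated $B$-module. By Theorem \ref{qcsheaf}, together with Propositions \ref{prop:loc1} and \ref{monoidstalk} identifying the stalks, the restriction of $\Omega_{X/Y}(M_X/M_Y)$ to $\spec B$ is isomorphic to $\widetilde{\Omega_{B/A}(M_B/M_A)}$.

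Finally I would invoke the noetherian hypothesis. Since $X$ is noetherian the ring $B$ is noetherian, so over $B$ every finitely generated module is finitely presented; consequently $\widetilde{\Omega_{B/A}(M_B/M_A)}$ is a coherent $\Oc_{\spec B}$-module. As $p\in X$ was arbitrary and coherence is a local property, $\Omega_{X/Y}(M_X/M_Y)$ is coherent.

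I expect the difficulty to lie in the bookkeeping rather than in any single deep point. The one step needing genuine care is choosing a single affine $\spec B$ over $\spec A$ on which all three hypotheses hold simultaneously, so that the coherent log-structure data and the finite type data survive the localizations used to reach a common principal open. A second subtlety is that the coherent hypothesis only furnishes $M_X$ as $\widetilde{P_B}$ for a finitely generated \emph{pre-log} structure, and the global sections $M_X(\spec B)$ may be strictly larger (Example \ref{badmonoidsheaf}); this is circumvented by working with the associated log structure $M_B$ of $P_B$, since $\widetilde{M_B}=\widetilde{P_B}$ makes the identification of stalks in Theorem \ref{qcsheaf} insensitive to the discrepancy, and it is $M_B$ that is manifestly finitely generated over $B^*$.
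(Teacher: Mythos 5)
Your proposal is correct and follows essentially the same route as the paper: restrict to affine patches furnished by the coherence hypothesis, identify $\Omega_{X/Y}(M_X/M_Y)|_{\spec B}$ with $\widetilde{\Omega_{B/A}(P_B/P_A)}$ via Theorem \ref{qcsheaf}, and apply Proposition \ref{prop:fg} together with Remark \ref{rem:omegafg} to get local finite generation. Your additional care about the degree-$1$ graded piece, the discrepancy of Example \ref{badmonoidsheaf}, and the noetherian step promoting finite generation to coherence just makes explicit what the paper's terser proof leaves implicit.
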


\begin{proof}
Let $p\in X$ and define $A, B, P_A, P_B$ as they are in the proof of Theorem \ref{qcsheaf}. Then we have an isomorphism of sheaves
\[
\HS^n_{X/Y}(M_X/M_Y)|_{\spec B}\cong \widetilde{\HS}^n_{B/A}(P_B/P_A).
\]
Since $f$ is locally of finite type, $B$ is a finitely generated $A$-algebra. Moreover, because $X$ is coherent, $P_B$ is finitely generated over $B^*$. By Proposition \ref{prop:fg} and Remark \ref{rem:omegafg}, $\Omega_{B/A}(P_B/P_A)$ is a finitely generated $B$-module. Since $p$ was arbitrary $\Omega_{X/Y}(M_X/M_Y)$ is coherent.
\end{proof}

Finally log jet spaces can be defined.

\begin{mydef}
Let $f:X\rightarrow Y$ be a morphism of quasi-coherent log schemes and define the \textit{log jet space of order} $n$ \textit{of} $X$ \textit{over} $Y$ to be the scheme
\[
J^n_{X/Y}(M_X/M_Y) := \sheafspec\HS^n_{X/Y}(M_X/M_Y)
\]
over $X$. If the log structures $M_X$ and $M_Y$ are trivial, the notation will be shortened to $J^n_{X/Y}$.
\end{mydef}

\section{\'{E}tale Morphisms}

Building upon the previous section, \'{e}tale morphisms will give another method to construct log Hasse-Schmidt rings. Unfortunately the definition of formally \'{e}tale in the category of log algebras differs from the traditional one in the category of rings. For a full treatment see \cite{ogus06}, but we can at least motivate some of the reasoning with an example.

\begin{example}
Let $C$ be as defined in Example \ref{ex:bestexample}. By Example \ref{ex:calcd} we know that $\Omega_{C/k}(M_C/k^*)$ is free and generated by a single element, $\partial_1 x$. In the case of rings this would typically indicate that $C$ is smooth. However, consider the diagram
\[
\xymatrix{
C \ar[r]^f & k[t]/t^2\\
k\ar[u]\ar[r] & k[t]/t^3\ar[u]
}
\]
where $k[t]/t^2$ and $k[t]/t^3$ have log structures $(k[t]/t^2)^\times$ and $(k[t]/t^3)^\times$ respectively and $k$ has the trivial log structure. Define $f$ as $f(x)=t$ and $f(y)=t$, which induces a surjective morphism on log structures. Suppose there exists some $u:C\rightarrow k[t]/t^3$ making the diagram commute. Then $u(x) = t+a t^2, u(y)=t+bt^2$ where $a,b\in k$. However, $u(0)=u(x)^2-u(y)^3 = t^2\neq 0$, so therefore no such $u$ can exist.

This should indicate that this scheme is simply not smooth, but its logarithmic tangent space at any point is $1$-dimensional, which in the category of schemes would indicate smoothness. Therefore these two definitions do not initially appear to be compatible. However, this problem can be remedied by adjusting the log structures in this example. If we require that $k[t]/t^3\rightarrow k[t]/t^2$ be a strict morphism of log algebras then the log structure on $k[t]/t^2$ is given by $(k[t]/t^2)^*\oplus_{(k[t]/t^3)^*} (k[t]/t^3)^\times$ and the $f$ constructed above is no longer well defined.

Strictness in and of itself is not a bad requirement, but unfortunately it is not enough in general. The morphism $k[x]\rightarrow C$ should be \'{e}tale, in particular unramified, when the log structure on $k[x]$ is the minimal sub-log structure of $(k[x])^\times$ containing $x$. This morphism induces a homeomorphism of topological spaces associated to the corresponding varieties, in addition it induces an isomorphism on their sheaves of log differentials. It should be reasonable that such a map is \'{e}tale. Consider the diagram
\[
\xymatrix{
C\ar[r]^f &k[t]/t^2\\
k[x]\ar[u]\ar[r]_h & k[t]/t^5\ar[u]_g
}
\]
where $k[t]/t^5$ has log structure $(k[t]/t^5)^\times$ and $k[t]/t^2$ has log structure $(k[t]/t^2)^*\oplus_{(k[t]/t^5)^*} (k[t]/t^5)^\times$. Additionally define $h(x) = f(x) = f(y) =0$ and $f^\flat(x) = (1,0)$, $f^\flat(y) = (1,t^2)$. For every $a\in k$ there exists a $u_a:C\rightarrow k[t]/t^5$ such that $u_a(x) = 0$, $u_a(y) = t^2+at^4 = (1+at^2)t^2$. Note that in this case the morphism of log structures is entirely determined by $u_a$. As a map of rings, $g\circ u_a = f$. In $(k[t]/t^2)^*\oplus_{(k[t]/t^5)^*} (k[t]/t^5)^\times$
\[
(1,(1+at^2)t^2) = (1+at^2, t^2) = (1, t^2)
\]
and therefore $g^\flat\circ u_a^\flat = f^\flat$. However, $a$ was arbitrary and $k[x]\rightarrow C$ should be unramified.

Identifying the source of the problem in this case is a little more difficult. Definitely part of the problem is that $u_a^\flat(y) \neq 0$ but $f^\flat(y) = 0$. This is where some wiggle room is given for the various $u_a$. The standard method of preventing this from happening is requiring that the group $1+\ker(g)$ acts freely on the log structure of $k[t]/t^5$. In this particular case it would mean that $t^2+at^4$ cannot be in the log structure as $(1+t^3)\cdot (t^2+at^4) = t^2+at^4$.
\end{example}

\begin{mydef}
A \textit{log thickening} is a strict surjective morphism $i:T\rightarrow S$ of log algebras such that $\ker(i)$ is nilpotent and the subgroup $1+\ker(i)\subset T^*$ acts freely on $M_T$.
\end{mydef}

\begin{mydef}
A morphism of log algebras $B\rightarrow C$ is \textit{formally log smooth} (respectively \textit{log unramified}, respectively \textit{log \'{e}tale}) if for every commutative diagram
\[
\xymatrix{
C \ar[r]^v & S\\
B \ar[u] \ar[r] &T \ar[u]_i
}
\]
where $i:T\rightarrow S$ is a log thickening, there exists at least (respectively at most, respectively exactly) one morphism $u:C\rightarrow T$ such that $i\circ u = v$.
\end{mydef}

\begin{thm}
Let $f: B\rightarrow C$ be an \'{e}tale morphism of log algebras over $A$. Then
\[
\HS^n_{B/A}(M_B/M_A)\otimes_B C \cong \HS^n_{C/A}(M_C/M_A),
\]
where the log structure $M$ on $\HS^n_{B/A}\otimes_B C$ is defined by the pre-log structure $\alpha_C:M_C\rightarrow \HS^n_{B/A}(M_B/M_C)\otimes_B C$.
\end{thm}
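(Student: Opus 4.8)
The plan is to prove the isomorphism by showing that both log algebras represent the same functor on log algebras over $C$ and then invoking Yoneda's lemma, with the \'{e}tale hypothesis entering through a lifting property. Fix a log algebra $R$ over $C$, write $\mathrm{str}_C: C\to R$ and $\mathrm{str}_B: B\to C\to R$ for the structure morphisms, and let $i: R[t]/t^{n+1}\to R$ denote reduction modulo $t$. By the Second Universal Property (Corollary \ref{cor:functor}) together with the First (Theorem \ref{thm:func}), morphisms $\HS^n_{C/A}(M_C/M_A)\to R$ of log algebras over $C$ correspond bijectively to morphisms $\psi: C\to R[t]/t^{n+1}$ of log algebras over $A$ with $i\circ\psi = \mathrm{str}_C$; the clause ``over $C$'' is precisely the requirement that the reduction of $\psi$ recover the structure morphism.

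Next I would identify the functor represented by $\HS^n_{B/A}(M_B/M_A)\otimes_B C$ with its stated log structure. Using the universal property of the tensor product (pushout) on the underlying rings, a ring homomorphism out of $\HS^n_{B/A}(M_B/M_A)\otimes_B C$ over $C$ is the same as a ring homomorphism $\HS^n_{B/A}(M_B/M_A)\to R$ over $B$, which by Theorem \ref{thm:func} is a higher log derivation $(D_0,\ldots,D_n,\delta_1,\ldots,\delta_n)$ from $B$ to $R$ over $A$ with $D_0=\mathrm{str}_B$ (here the data $\delta_i|_{M_B}$ come from the images of the ring elements $\partial_i m$). Because the log structure on the tensor product is the one associated to $\alpha_C: M_C\to \HS^n_{B/A}(M_B/M_A)\otimes_B C$ and we are working over $C$, the universal property of the associated log structure (Proposition \ref{existslog}) forces the monoid part of such a morphism to be $\mathrm{str}_C$ on $M_C$, so it contributes no extra data. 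Hence, via Theorem \ref{thm:rep}, these morphisms correspond bijectively to morphisms $\phi: B\to R[t]/t^{n+1}$ of log algebras over $A$ with $i\circ\phi=\mathrm{str}_B$. The comparison map between the two functors is precomposition with $f$, namely $\psi\mapsto\psi\circ f$, which is manifestly natural in $R$.

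The crux is that this comparison map is a bijection, and this is exactly where the \'{e}tale hypothesis is used. The key lemma I would establish first is that $i: R[t]/t^{n+1}\to R$ is a log thickening: the kernel $(t)$ is nilpotent since $t^{n+1}=0$; the map is strict because $\widehat{M}^n_R = M_R\oplus_{R^*}(R[t]/t^{n+1})^*$, so amalgamating it back over $(R[t]/t^{n+1})^*$ with $R^*$ returns $M_R$; and $1+(t)$ acts freely on $\widehat{M}^n_R$ since $1+(t)$ meets $R^*$ only in $\{1\}$, whence a nontrivial element of $1+(t)$ can fix no element of $M_R\oplus_{R^*}(R[t]/t^{n+1})^*$. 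Granting this, the square with top $\mathrm{str}_C$, left $f$, bottom $\phi$ and right $i$ commutes (as $i\circ\phi=\mathrm{str}_B=\mathrm{str}_C\circ f$), so by log \'{e}taleness of $f$ there is a unique lift $\psi: C\to R[t]/t^{n+1}$ with $i\circ\psi=\mathrm{str}_C$ and $\psi\circ f=\phi$. Existence and uniqueness of the lift are exactly surjectivity and injectivity of the comparison map, so the two functors are naturally isomorphic and Yoneda yields the claimed isomorphism of log algebras over $C$, hence over $A$.

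I expect the main obstacle to be the careful bookkeeping of log structures in the second paragraph: namely, verifying that equipping the tensor product with the $M_C$-associated log structure, rather than the base change of the $M_B$-log structure, is precisely what makes its functor of points consist of derivations whose degree-zero monoid data is governed by $M_C$ while the higher data $\delta_i$ remain prescribed only on $M_B$, leaving exactly the gap that the \'{e}tale lifting fills.
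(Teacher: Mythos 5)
Your proposal is correct and follows essentially the same route as the paper: both identify the functors of points via the First and Second Universal Properties, observe that $R[t]/t^{n+1}\rightarrow R$ is a log thickening, and use log \'{e}taleness of $B\rightarrow C$ to produce the unique lift $C\rightarrow R[t]/t^{n+1}$ that makes the comparison of Hom-sets bijective. The only substantive difference is that you explicitly verify the log-thickening claim (strictness and the free action of $1+(t)$ on $\widehat{M}^n_R$), which the paper asserts without proof; that verification is sound and is a welcome addition.
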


\begin{proof}
Let $R$ be a log algebra over $A$ and $\beta\in\Hom_A(\HS^n_{B/A}(M_B/M_A)\otimes_B C, R)$. Then by the universal properties of tensor product and amalgamated sum there exists a unique commutative diagram of $A$ log algebras
\[
\xymatrix{
C\ar[r] & R\\
B\ar[u]\ar[r] & \HS^n_{B/A}(M_B/M_A)\ar[u]
}
\]
By the universal property of log Hasse-Schmidt rings there exists a unique commutative diagram of $A$ log algebras
\[
\xymatrix{
C\ar[r] & R\\
B\ar[u]\ar[r] & R[t]/t^{n+1} \ar[u]
}
\]
Since $B\rightarrow C$ is \'{e}tale and $R[t]/t^{n+1}\rightarrow R$ is a log thickening there exists a unique morphism $C\rightarrow R[t]/t^{n+1}$ making the diagram commute. Again by the universal property of log Hasse-Schmidt rings there exists a unique morphism $\HS^n_{C/A}(M_C/M_A)\rightarrow R$ of $A$ log algebras. This construction is reversible, so $\Hom_A(\HS^n_{B/A}(M_B/M_A)\otimes_B C, R)\cong \Hom_A(\HS^n_{C/A}(M_C/M_A), R)$. Since $R$ was an arbitrary log algebra over $A$,
\[
\HS^n_{B/A}(M_B/M_A)\otimes_B C \cong \HS^n_{C/A}(M_C/M_A).
\]
\end{proof}

\section{Main Result}

\begin{mydef}
Let $X$ be a scheme, $p$ a closed point, $D$ an effective Cartier divisor on $X$ and $s$ a regular function representing $D$ in some neighborhood of $p$. Define the \textit{multiplicity of $D$ at $p$}, denoted $\mult_p D$, to be the largest integer $n$ such that $s\in\textbf{m}^n_p$.
\end{mydef}

\begin{prop}
$\mult_p D$ is independent of the choice of $s$.
\end{prop}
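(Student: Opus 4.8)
The plan is to reduce the independence statement to the elementary fact that multiplication by a unit of the local ring $\Oc_{X,p}$ preserves membership in every power of the maximal ideal $\textbf{m}_p$. First I would recall that, since $D$ is an effective Cartier divisor, its ideal sheaf is locally principal and locally generated by a non-zero-divisor. Consequently, if $s$ and $s'$ are two regular functions each representing $D$ on some neighborhood of $p$, then after passing to a common smaller neighborhood (or simply to the local ring $\Oc_{X,p}$) the principal ideals they generate coincide: $(s) = (s')$ in $\Oc_{X,p}$.

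The key step is then to upgrade the equality of ideals $(s) = (s')$ to an equation $s = u\,s'$ with $u\in\Oc_{X,p}^*$ a unit. I would write $s = a\,s'$ and $s' = b\,s$ for suitable $a,b\in\Oc_{X,p}$, whence $s = ab\,s$ and $(1-ab)\,s = 0$. Because a local equation of a Cartier divisor is a non-zero-divisor, this forces $ab = 1$, so $a = u$ is invertible. This is the one place where the Cartier (rather than merely Weil or abstract) hypothesis is essential, and I expect verifying this non-zero-divisor input to be the main, though still routine, obstacle.

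Finally, with $s = u\,s'$ in hand, I would observe that for each $n$ the set $\textbf{m}_p^n$ is an ideal of $\Oc_{X,p}$, so $s'\in\textbf{m}_p^n$ implies $s = u\,s'\in\textbf{m}_p^n$, and symmetrically $s' = u^{-1}s$ gives that $s\in\textbf{m}_p^n$ implies $s'\in\textbf{m}_p^n$. Hence $s\in\textbf{m}_p^n$ if and only if $s'\in\textbf{m}_p^n$, and therefore the largest integer $n$ with $s\in\textbf{m}_p^n$ equals the largest such integer for $s'$. This shows that $\mult_p D$ does not depend on the chosen representative.
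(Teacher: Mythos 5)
Your proposal is correct and follows essentially the same route as the paper: both reduce to the identity $s = u\,s'$ with $u\in\Oc_{X,p}^*$ and then use that $\textbf{m}_p^n$ is an ideal. The only difference is that the paper takes $s = u\,s'$ as immediate from the definition of two local equations for the same Cartier divisor, whereas you carefully derive the unit from the equality of principal ideals and the non-zero-divisor hypothesis; this is a harmless (and slightly more self-contained) elaboration of the same argument.
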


\begin{proof}
Let $s$ and $s'$ both represent $D$ in $\Oc_{X,p}$. Then by definition $s = u s'$ where $u\in \Oc_{X,p}^*$, therefore $s\in \textbf{m}_p^n$ if and only if $s' \in\textbf{m}_p^n$.
\end{proof}

\begin{prop}\label{prop:loc}
Let $X$ be an equidimensional scheme of dimension $q$ over an algebraically closed field $k$. Then for any nonsingular closed point $p$ the completion of the local ring $\widehat{\Oc}_{X,p}$ is isomorphic to $k[[x_1,\ldots,x_q]]$.
\end{prop}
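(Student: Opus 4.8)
The plan is to reduce the statement to the structure theory of regular local rings, using that a nonsingular point has a regular local ring and that the completion of such a ring with coefficient field $k$ is a formal power series ring. First I would record the role of the hypotheses on $k$ and $p$: since $k$ is algebraically closed and $p$ is a closed point of the $k$-scheme $X$, the residue field $\Oc_{X,p}/\mathbf{m}_p$ is a finite extension of $k$ and hence equals $k$. The structure map furnishes a section $k\to\Oc_{X,p}$ of the quotient $\Oc_{X,p}\to k$, so the situation is equicharacteristic and $k$ itself already serves as a coefficient field; no appeal to the existence part of Cohen's structure theorem is needed. Because $X$ is equidimensional of dimension $q$ and $p$ is nonsingular, the local ring $R:=\Oc_{X,p}$ is a regular local ring of Krull dimension $q$, so $\dim_k \mathbf{m}_p/\mathbf{m}_p^2 = q$ and I may fix a regular system of parameters $x_1,\ldots,x_q$ generating $\mathbf{m}_p$.

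Next I would pass to the completion $\widehat{R}=\widehat{\Oc}_{X,p}$. Completion preserves the generators of the maximal ideal, the residue field, and regularity, so $\widehat{R}$ is a complete regular local ring of dimension $q$ with residue field $k$, in which $x_1,\ldots,x_q$ remains a regular system of parameters. I then define the $k$-algebra homomorphism
\[
\varphi: k[[t_1,\ldots,t_q]] \longrightarrow \widehat{R}, \qquad t_i\mapsto x_i,
\]
which is well defined because $\widehat{R}$ is $\mathbf{m}$-adically complete and the $x_i$ lie in the maximal ideal, so any formal power series in the $t_i$ maps to a convergent series in $\widehat{R}$. It remains to show that $\varphi$ is an isomorphism.

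The key step is to compare associated graded rings. Regularity of $R$ gives $\operatorname{gr}_{\mathbf{m}_p}(R)\cong k[x_1,\ldots,x_q]$ as a polynomial ring, and since completion does not alter the associated graded, also $\operatorname{gr}(\widehat{R})\cong k[x_1,\ldots,x_q]$. The filtered map $\varphi$ induces on associated graded rings the evident isomorphism $k[t_1,\ldots,t_q]\to k[x_1,\ldots,x_q]$ sending $t_i$ to $x_i$. Since both source and target are complete and separated in their filtrations and $\varphi$ induces an isomorphism on the associated graded, a standard filtered-ring argument upgrades this to an isomorphism: surjectivity follows by lifting a preimage one degree at a time and summing the resulting convergent series in $\widehat{R}$, and injectivity follows from a leading-term argument, since a nonzero power series has a lowest-degree form that maps to a nonzero element of $\operatorname{gr}(\widehat{R})$. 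I expect the associated-graded comparison — specifically checking that the successive approximation for surjectivity genuinely converges in $\widehat{R}$ — to be the one point requiring care, whereas the identification of the residue field with $k$ and the regularity of $\widehat{R}$ are routine consequences of the hypotheses.
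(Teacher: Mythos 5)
Your proof is correct, but it takes a different route from the paper only in the sense that the paper does no work at all: it simply cites Hartshorne, Theorem I.5.5A (the equicharacteristic Cohen structure theorem for regular local rings), which is precisely the statement you end up proving. What you have written out --- identifying the residue field with $k$, choosing a regular system of parameters, defining $\varphi: k[[t_1,\ldots,t_q]]\to\widehat{\Oc}_{X,p}$ by $t_i\mapsto x_i$, and upgrading the isomorphism $\operatorname{gr}\varphi$ of associated graded rings to an isomorphism of complete separated filtered rings --- is the standard proof of that cited theorem, and every step is sound. The one point worth flagging is that your opening claim, that a closed point of a $k$-scheme has residue field finite over $k$, requires $X$ to be locally of finite type over $k$ (it is the Nullstellensatz); the proposition as stated does not say this explicitly, but the paper's citation of a theorem about varieties carries the same implicit hypothesis, so you are no worse off than the source. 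The trade-off is the usual one: the citation keeps the exposition short, while your argument is self-contained and makes visible exactly which hypotheses (algebraically closed field, closed point, nonsingularity, equidimensionality) are consumed where.
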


\begin{proof}
This follows directly from \cite{hart77} Theorem I.5.5A.
\end{proof}

\begin{prop}\label{prop:smoothcover}
Let $X$ be a smooth scheme of dimension $q$ over a field $k$. Then there exists an open cover $\{U_i\}$ such that
\[
J^n_{U_i/k} \cong U_i\times_k \A^{qn}_k.
\]
\end{prop}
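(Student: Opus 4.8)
The plan is to reduce to the case of affine space via the local structure theorem for smooth schemes and then invoke the étale invariance of the log Hasse--Schmidt ring proved at the end of the previous section. Throughout every log structure is trivial, so that $J^n_{U_i/k}=\sheafspec\HS^n_{U_i/k}$ and a morphism of log algebras is determined by its underlying ring map.

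First I would record the computation over affine space. If $B=k[x_1,\dots,x_q]$, then by Proposition \ref{prop:boringpoly},
\[
\HS^n_{B/k}\cong B[d_jx_l]_{j=1,\dots,n;\,l=1,\dots,q},
\]
which (recalling $d_0x_l=x_l$) is a polynomial ring over $B$ in exactly $qn$ new variables. Consequently $J^n_{\A^q_k/k}\cong \A^q_k\times_k\A^{qn}_k$, so the assertion already holds for $X=\A^q_k$ with the trivial cover. Next, since $X$ is smooth of dimension $q$ over $k$, the standard local structure theorem for smooth morphisms (a smooth $k$-scheme of dimension $q$ is Zariski-locally étale over $\A^q_k$) supplies an affine open cover $\{U_i=\spec C_i\}$ together with étale morphisms $\pi_i\colon U_i\to\A^q_k$, i.e. étale $k$-algebra maps $B\to C_i$.

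The crux is to upgrade each $\pi_i$ to a \emph{log} étale morphism, so that the étale-invariance theorem applies. Because $B$ and $C_i$ carry the trivial log structure, a morphism of log algebras out of $C_i$ into any log algebra $T$ is the same datum as a ring homomorphism: the monoid part $C_i^*\to M_T$ is forced to be $\alpha_T^{-1}$ composed with the ring map restricted to units, using that $\alpha_T$ induces an isomorphism on units. Moreover, any log thickening $i\colon T\to S$ is, on underlying rings, a surjection with nilpotent kernel. Hence the log-étale lifting problem for $B\to C_i$ against $i$ coincides with the classical formally-étale lifting problem for the ring map $B\to C_i$ against the nilpotent thickening $T\to S$, and the compatibility and uniqueness on log structures are automatic by the preceding observation. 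As $B\to C_i$ is classically étale, in particular formally étale and of finite presentation, the required unique lift exists, so $B\to C_i$ is log étale.

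Finally I would apply the étale-invariance theorem of the previous section to obtain
\[
\HS^n_{C_i/k}\cong \HS^n_{B/k}\otimes_B C_i\cong C_i[d_jx_l]_{j=1,\dots,n;\,l=1,\dots,q},
\]
again a polynomial ring in $qn$ variables, now over $C_i$. Passing to the associated quasi-coherent sheaf on $U_i$ (Theorem \ref{qcsheaf}) and applying $\sheafspec$ yields
\[
J^n_{U_i/k}\cong U_i\times_k\A^{qn}_k,
\]
as desired. I expect the main obstacle to be the middle step: verifying that, for trivial log structures, a classically étale morphism satisfies the log-étale lifting criterion, so that the étale-invariance theorem is genuinely available; the local structure theorem and the two polynomial-ring identifications are then routine.
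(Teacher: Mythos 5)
Your argument is correct, but it is not the paper's argument: the paper disposes of this proposition in one line by citing Proposition 5.10 of \cite{vojt1}, which treats (non-log) jet spaces of smooth morphisms directly. What you have done is inline a proof using only the machinery developed in this paper: the computation $\HS^n_{k[x_1,\dots,x_q]/k}\cong k[d_jx_l]_{j=0,\dots,n}$ from Proposition \ref{prop:boringpoly}, the local structure theorem realizing a smooth $k$-scheme of dimension $q$ as Zariski-locally \'{e}tale over $\A^q_k$, and the \'{e}tale base-change theorem $\HS^n_{B/A}(M_B/M_A)\otimes_BC\cong\HS^n_{C/A}(M_C/M_A)$ from the preceding section. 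The one step that genuinely needs care --- and which you correctly identify and handle --- is that the base-change theorem is stated for \emph{log} \'{e}tale morphisms, so you must check that a classically \'{e}tale map of trivially-logged $k$-algebras is log \'{e}tale. Your verification is right: for a trivially-logged source the monoid component of any morphism into a log algebra $T$ is forced to be $\alpha_T^{-1}$ composed with the ring map on units, a log thickening is on underlying rings a nilpotent surjection, and hence the log lifting problem collapses to the classical formally \'{e}tale one, with the monoid compatibilities automatic because $\alpha_S$ is injective on $\alpha_S^{-1}(S^*)$. The trade-off between the two approaches is the usual one: the citation is shorter and leans on an established reference, while your version keeps the proof internal to the log framework and records the useful general fact that for trivial log structures ``log \'{e}tale'' agrees with ``\'{e}tale,'' which is what makes the log base-change theorem usable here at all.
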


\begin{proof}
This is a special case of Proposition 5.10 in \cite{vojt1}.
\end{proof}

\begin{thm}\label{thm:factorfibers}
Let $X$ be a scheme of dimension $q$ over an algebraically closed field $k$, $p\in X$ a nonsingular closed point, $D$ an effective Cartier divisor on $X$ locally represented by $s\in\Oc_{X,p}$. Then $\mult_p D \geq n+1$ if and only if $\{d_i s\}_{i=0}^n\subset \textbf{m}_p\HS^n_{\Oc_{X,p}/k}$.
\end{thm}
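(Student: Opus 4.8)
The plan is to reduce the statement to an explicit computation in a polynomial ring. Since $p$ is a nonsingular closed point and $k$ is algebraically closed, $B := \Oc_{X,p}$ is a regular local ring with residue field $B/\textbf{m}_p = k$, and after shrinking $X$ to a smooth neighborhood of $p$ I can invoke Proposition \ref{prop:smoothcover}: there is an open $U\ni p$ with $J^n_{U/k}\cong U\times_k \A^{qn}_k$. Passing to the stalk at $p$ (via Proposition \ref{prop:loc1}) and fixing a regular system of parameters $x_1,\ldots,x_q$ as local coordinates identifies
\[
\HS^n_{\Oc_{X,p}/k}\cong B[d_i x_j : 1\le i\le n,\ 1\le j\le q],
\]
a polynomial ring in which the $d_i x_j$ are algebraically independent over $B$. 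Applying Corollary \ref{cor:functor} to the identity on $\HS^n_{B/k}$ produces the canonical ring homomorphism $\phi: B\to \HS^n_{B/k}[t]/t^{n+1}$, $b\mapsto \sum_{i=0}^n d_i(b)\,t^i$, where $d_0=\mathrm{id}$.

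I would then reduce modulo $\textbf{m}_p$. Because the $d_i x_j$ are free variables and $B/\textbf{m}_p=k$, the quotient $\pi:\HS^n_{B/k}\to \HS^n_{B/k}/\textbf{m}_p\HS^n_{B/k}=k[d_i x_j]$ sends $d_0 x_j=x_j\mapsto 0$ and fixes the variables $d_i x_j$ for $i\ge 1$. Composing gives $\bar\phi: B\to k[d_i x_j][t]/t^{n+1}$, $\bar\phi(b)=\sum_{i=0}^n \pi(d_i b)\,t^i$, and by construction
\[
\{d_i s\}_{i=0}^n\subset \textbf{m}_p\HS^n_{B/k}\iff \bar\phi(s)=0 .
\]
The key structural point is that $\bar\phi(x_j)=\sum_{i=1}^n (d_i x_j)\,t^i$ lies in the nilpotent ideal $(t)$, so $\bar\phi(\textbf{m}_p)\subset (t)$ and hence $\bar\phi(\textbf{m}_p^m)\subset (t)^m$. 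The forward implication is then immediate: if $\mult_p D\ge n+1$, i.e.\ $s\in \textbf{m}_p^{n+1}$, then $\bar\phi(s)\in (t)^{n+1}=0$, so every $d_i s\in \textbf{m}_p\HS^n_{B/k}$.

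For the converse I would argue by contraposition, tracking the leading form. Since $B$ is regular, $\mathrm{gr}_{\textbf{m}_p}B\cong k[x_1,\ldots,x_q]$ (the graded refinement of Proposition \ref{prop:loc}), so a nonzero $s$ with $s\in \textbf{m}_p^m\setminus\textbf{m}_p^{m+1}$ has a nonzero leading form $\mathrm{in}_m(s)$, homogeneous of degree $m$. As $\bar\phi(\textbf{m}_p^{m+1})\subset (t)^{m+1}$, the assignment sending $s$ to the coefficient of $t^m$ in $\bar\phi(s)$ descends to a map $\mathrm{gr}^m_{\textbf{m}_p}B\to k[d_i x_j]$; evaluating on monomials shows it is precisely the substitution $x_j\mapsto d_1 x_j$, which is injective because $d_1 x_1,\ldots,d_1 x_q$ are algebraically independent. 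Thus for $m\le n$ the coefficient of $t^m$ in $\bar\phi(s)$ equals $\mathrm{in}_m(s)(d_1 x_1,\ldots,d_1 x_q)\ne 0$, so $\bar\phi(s)\ne 0$. Hence $\mult_p D\le n$ forces some $d_i s\notin \textbf{m}_p\HS^n_{B/k}$, completing the equivalence.

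I expect the main obstacle to be this converse, and specifically the verification that the lowest-order term of $\bar\phi(s)$ recovers the leading form of $s$. This needs two facts working together: that $\HS^n_{B/k}$ is \emph{freely} generated by the $d_i x_j$, so that reduction modulo $\textbf{m}_p$ yields an honest polynomial ring in which the $d_1 x_j$ remain algebraically independent; and that the associated graded of the regular local ring $B$ is a polynomial ring, so that a nonzero leading form genuinely survives the substitution $x_j\mapsto d_1 x_j$. The forward direction, by contrast, is a one-line nilpotency argument.
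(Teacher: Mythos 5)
Your proof is correct, and it takes a recognizably different (and somewhat cleaner) route than the paper's. The paper argues pointwise: for the forward direction it observes that every $k$-algebra homomorphism $h:\Oc_{X,p}\rightarrow k[t]/t^{n+1}$ is automatically local, deduces that $\phi(d_i s)=0$ for every $k$-point $\phi$ of the fiber $\spec\left(\HS^n_{\Oc_{X,p}/k}/\textbf{m}_p\HS^n_{\Oc_{X,p}/k}\right)\cong\A^{nq}_k$, and then invokes the fact that a regular function vanishing at all $k$-points of affine space over an algebraically closed field is zero; for the converse it writes $s=\sum\alpha_i u_i+s_2$ with $\alpha_i$ monomials of degree $w$ and $u_i$ units, chooses a specific $k$-point $r$ where $\sum\alpha_i(r)\bar u_i\neq 0$, and builds a single homomorphism $h(x_j)=ty_j$ witnessing $d_w s\notin\textbf{m}_p\HS^n_{\Oc_{X,p}/k}$. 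You instead work with the universal homomorphism $\bar\phi:B\rightarrow\left(\HS^n_{B/k}/\textbf{m}_p\HS^n_{B/k}\right)[t]/t^{n+1}$ throughout: the forward direction becomes the same one-line nilpotency observation $\bar\phi(\textbf{m}_p^{n+1})\subset(t)^{n+1}=0$ without any appeal to $k$-points, and the converse identifies the $t^m$-coefficient of $\bar\phi(s)$ with the image of the initial form $\mathrm{in}_m(s)\in\mathrm{gr}^m_{\textbf{m}_p}B\cong k[x_1,\ldots,x_q]_m$ under the injective substitution $x_j\mapsto d_1x_j$. This is the ``generic point'' version of the paper's specialization argument; what it buys is that neither direction actually uses algebraic closedness of $k$ (only regularity of $\Oc_{X,p}$ and the freeness of $\HS^n_{B/k}$ over $B$), whereas the paper needs $k$ algebraically closed, or at least infinite, in both halves. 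The one point where you lean slightly harder than the paper does on Proposition \ref{prop:smoothcover} is the identification of the free polynomial generators with $d_ix_j$ for a chosen regular system of parameters $x_1,\ldots,x_q$ (the proposition as stated only gives $J^n_{U/k}\cong U\times_k\A^{qn}_k$ for some coordinates); this is true because such parameters define \'{e}tale coordinates near $p$, and it is no worse than the paper's own unproved claim that a homomorphism $\Oc_{X,p}\rightarrow k[[x_1,\ldots,x_q]]\rightarrow k[t]/t^{n+1}$ is determined freely by the images of the $x_i$, but it deserves a sentence of justification.
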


\begin{proof}
Suppose $\mult_p D\geq n+1$. Let $h:\Oc_{X,p}\rightarrow k[t]/t^{n+1}$ be any $k$-algebra homomorphism. Let $x\in\textbf{m}_p$ and write $h(x) = c+tf(t)$ for some $c\in k$ and $f(t)\in k[t]$. Then $h(x-c)\in (t)$. Suppose $c\neq 0$, so $x-c\notin\textbf{m}_p$ and $x-c$ is a unit. Since the image of a unit is a unit, $h(x-c)$ is a unit. This is a contradiction, so $c=0$ and $h$ is local. By \ref{cor:functor} there is a bijection between all such $h$ and all ring homomorphisms $\phi:\HS^n_{\Oc_{X,p}}\rightarrow k$. By the construction of the bijection in \ref{cor:functor},
$\phi(d_i s)=0$ for $i=0,\ldots,n$ and every $\phi$. Note that every  morphism $\HS^n_{\Oc_{X,p}/k}/\textbf{m}_p \HS^n_{\Oc_{X,p}/k}\rightarrow k$
lifts to such a $\phi$ via the quotient map $\psi: \HS^n_{\Oc_{X,p}/k}\rightarrow\HS^n_{\Oc_{X,p}/k}/\textbf{m}_p\HS^n_{\Oc_{X,p}/k}$ and thus the regular functions $\psi(d_i s)$ vanish at every $k$-point of $\spec \left(\HS^n_{\Oc_{X,p}/k}/\textbf{m}_p\HS^n_{\Oc_{X,p}/k}\right)$ for $i=0,\ldots,n$. However, $\spec\left(\HS^n_{\Oc_{X,p}/k}/\textbf{m}_p\HS^n_{\Oc_{X,p}/k}\right) \cong \A^{nq}_k$ by Proposition \ref{prop:smoothcover}. The only regular function vanishing everywhere on $\A^{nq}_k$ is the zero function, so $\{d_i s\}_{i=0}^n\subset \textbf{m}_p \HS^n_{\Oc_{X,p}/k}$.

For the opposite direction suppose $\mult_p D = w < n+1$. By hypothesis $\Oc_{X,p}$ is a regular local ring and $\textbf{m}_p = (x_1,\ldots,x_q)$ where $q$ is the dimension of $\Oc_{X,p}$. Write $s$ as $s_1+s_2$ where $s_1 = \sum \alpha_i u_i$ with $\alpha_i\in k[x_1,\ldots,x_q]$ monomials of degree $w$, $u_i\in\Oc_{X,p}^*$ and $s_2\in \textbf{m}_p^{w+1}$. Let $\bar{u}_i\in k$ be the image of $u_i$ in the quotient map $\Oc_{X,p}/\textbf{m}_p = k$. Since $k$ is algebraically closed there exists a $k$-point $r=(y_1,\ldots,y_q)\in \spec{k[x_1,\ldots,x_q]}$ such that $\sum \alpha_i(r) \bar{u}_i = c$ for some $c\in k^*$. By \ref{prop:loc} any morphism $h:\Oc_{X,p}\rightarrow k[[x_1,\ldots,x_q]]\rightarrow k[t]/t^{n+1}$ is determined freely by the images of $x_i$. Define $h(x_j) = ty_j$ for $j\in\{1,\ldots,q\}$ and note that  $h(u_i)=\bar{u}_i+tu_i'$ for some $u_i'\in k[t]/t^{n+1}$. Combining everything gives
\[
\begin{split}
h(s_1) &= \sum h(\alpha_i)h(u_i)\\
&= \sum h(\alpha_i)\bar{u}_i + t \sum h(\alpha_i)u_i'\\
&= t^w\sum \alpha_i(r)\bar{u}_i + t \sum h(\alpha_i)u_i'\\
&= ct^w + t \sum h(\alpha_i)u_i'
\end{split}
\]
However, $h(\alpha_i)$ is divisible by $t^w$, so the second term is divisible by $t^{w+1}$ and $h(s_1)\neq 0$. Thus $h(s) \neq 0$. By \ref{cor:functor} there exists a homomorphism $\phi_h:\HS^n_{\Oc_{X,p}/k}\rightarrow k$ associated to $h$ such that $\phi_h(d_w s)=c$. Let $\pi:\Oc_{X,p}\rightarrow\HS^n_{\Oc_{X,p}/k}$ be the natural inclusion map. Since $h(\textbf{m}_p) \subset (t)$, the composition $\Oc_{X,p}\stackrel{h}{\rightarrow} k[t]/t^{n+1}\rightarrow k[t]/t = k$ is just the map $\Oc_{X,p}\rightarrow \Oc_{X,p}/\textbf{m}_p =k$, so $\phi_h(\pi(\textbf{m}_p)) =0$ by construction of $\phi_h$. Thus $\phi_h$ must factor through the quotient map
\[
\xymatrix{
\HS^n_{\Oc_{X,p}/k} \ar[d]\ar[r]^{\phi_h}& k\\
\HS^n_{\Oc_{X,p}/k}/\textbf{m}_p\HS^n_{\Oc_{X,p}/k}\ar[ur]}
\]
However, $\phi_h(d_w s) \neq 0$, so $d_w s\notin \textbf{m}_p\HS^n_{\Oc_{X,p}/k}$.
\end{proof}

\begin{mydef} \label{def:globaljet}
Let $X$ be a log scheme over $Y$ and define the \textit{global log jet sheaf} to be the subsheaf $\HSG^n_{X/Y}(M_X/M_Y)\subset \HS^n_{X/Y}(M_X/M_Y)$ of $\Oc_X$-algebras defined locally by
\[
\Gamma(V,\HSG^n_{X/Y}(M_X/M_Y)) := \Oc_X(V)[d^i\omega_v]_{\substack{ \omega_v\in\Gamma(X,\Omega_{X/Y}(M_X/M_Y))\\i=0,\ldots,n-1}} \subset \Gamma(V,\HS^n_{X/Y}(M_X/M_Y))
\]
for each open $V\subset X$. Additionally define the \textit{global log jet space} to be the scheme
\[
\JG^n_{X/Y}(M_X/M_Y) := \sheafspec\HSG^n_{X/Y}(M_X/M_Y)
\]
over $X$.
\end{mydef}

\begin{mydef} \label{def:coords}
Let $X$ be a log scheme over $Y$, $D$ a Cartier divisor on $X$ and $U\subset X$ an open subset. We say $D$ \textit{has global coordinates in $M_X/M_Y$} on $U$, if there exists an open cover $U = \bigcup U_i$ such that $D$ can represented by $s_i\in \Oc_X(U_i)$ where
\[
d_n s_i\in\Gamma(U_i, \HSG^n_{X/Y}(M_X/M_Y))
 \]
for all $i$ and $n\geq 0$.
\end{mydef}

\begin{thm}[Main Theorem]\label{thm:main}
Let $\bar{X},\bar{Y}$ be irreducible proper coherent log schemes over an algebraically closed field $k$. Let $X\subset \bar{X}$ and $Y\subset \bar{Y}$ be the complement of the support of the log structure in $\bar{X}$ and $\bar{Y}$ respectively, and $\bar{D}$ an effective Cartier divisor on $\bar{X}$ with global coordinates in $M_{\bar{X}/k^*}$ on $U\subset X$. Then there exists some integer $N$ such that for all $f\in \Hom_k(\bar{Y},\bar{X})$ in the category of log schemes either $f(\bar{Y})\subset \textrm{Supp}(\bar{D})$ or $\mult_p f^*(\bar{D})\leq N$ for all nonsingular $p\in f^{-1}(U\cap\bar{D})\cap Y$.
\end{thm}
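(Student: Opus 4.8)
The plan is to reduce the statement, via Theorem \ref{thm:factorfibers}, to a bound on the order to which a single global log differential can vanish at $p$, and then to extract that bound from the properness of $\bar{Y}$ together with the finiteness of global log differentials.

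First I would fix a morphism $f\in\Hom_k(\bar{Y},\bar{X})$ with $f(\bar{Y})\not\subset\mathrm{Supp}(\bar{D})$ and a nonsingular point $p\in f^{-1}(U\cap\bar{D})\cap Y$, and take $s$ a local equation for $\bar{D}$ near $f(p)$ as in Definition \ref{def:coords}, so $f^*s$ locally represents $f^*\bar{D}$ near $p$. Since $p$ lies in $Y$, the complement of the support of the log structure, that structure is trivial at $p$, so $M_{\bar{Y},p}=\Oc_{\bar{Y},p}^*$ and the local log Hasse--Schmidt ring $\HS^n_{\Oc_{\bar{Y},p}/k}(M_{\bar{Y},p}/k^*)$ coincides with the ordinary $\HS^n_{\Oc_{\bar{Y},p}/k}$. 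Theorem \ref{thm:factorfibers} then applies verbatim and gives $\mult_p f^*\bar{D}\geq n+1$ if and only if $\{d_i(f^*s)\}_{i=0}^n\subset\textbf{m}_p\HS^n_{\Oc_{\bar{Y},p}/k}$. It therefore suffices to produce an $N$, independent of $f$ and $p$, beyond which $d_N(f^*s)\notin\textbf{m}_p\HS^N$. It is essential that $p$ avoid the support of the log structure: the map $z\mapsto z^n$ on $\Ps^1$ with the boundary log structure at $0$ and $\infty$ shows the multiplicity is unbounded precisely at the boundary, and the log structure is exactly what quarantines that locus.

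Next I would pass to global data. By functoriality of the universal derivation one has $d_i(f^*s)=f^*(d_i s)$, and compatibility of $f$ with the derivation $d$ of Proposition \ref{prop:welldef} gives $f^*(d^j\eta)=d^j(f^*\eta)$. The hypothesis that $\bar{D}$ has global coordinates in $M_{\bar{X}/k^*}$ means, in the sense of Definition \ref{def:globaljet}, that $d_n s$ is a fixed polynomial in the $d^j\eta_v$ with $\eta_v\in\Gamma(\bar{X},\Omega_{\bar{X}/k^*})$ and $j\leq n-1$. Pulling back, $d_n(f^*s)$ is the same universal expression in the $d^j(f^*\eta_v)$, hence a global section of $\HSG^n_{\bar{Y}/k^*}$ assembled from the pulled-back log differentials $f^*\eta_v\in V:=\Gamma(\bar{Y},\Omega_{\bar{Y}/k^*})$. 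Because $\bar{Y}$ is proper and coherent, $\Omega_{\bar{Y}/k^*}$ is coherent by Proposition \ref{coherentdiff}, so $V$ is a finite-dimensional $k$-vector space; thus the whole family $\{d_n(f^*s)\}_f$ is built, by fixed universal operations, from vectors ranging in one finite-dimensional space $V$.

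The crux, and the step I expect to be the main obstacle, is to convert this finiteness into a uniform bound on the order of vanishing at $p$. By the argument proving Theorem \ref{thm:factorfibers}, the condition $d_n(f^*s)\in\textbf{m}_p\HS^n$ is equivalent to $d_n(f^*s)$ vanishing identically on the jet fiber $\A^{qn}_k$ over $p$ furnished by Proposition \ref{prop:smoothcover}. In the one-dimensional case this is transparent: $f^*\eta_v$ is a section of the log-cotangent sheaf of the curve $\bar{Y}$, its order of vanishing at $p$ is bounded by the degree of that sheaf unless it vanishes identically, and $d_n(f^*s)=0$ for all $n$ forces $f^*s$ constant, whence $f(\bar{Y})\subset\mathrm{Supp}(\bar{D})$; the resulting bound recovers Mason's. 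For general $\bar{Y}$ the degree count is unavailable, and I would argue by contradiction: were there no uniform $N$, there would be morphisms $f$ and points $p$ with $d_n(f^*s)$ vanishing on the fiber to arbitrarily high order $n$; Krull's intersection theorem precludes any single nonzero section of the coherent global jet sheaf from vanishing to infinite order at $p$, and the finite-dimensionality of $V$, placing the $f^*\eta_v$ in a fixed noetherian parameter space, should upgrade this to a uniform bound through a constructibility argument on the locus of high-order vanishing. Making this upgrade precise, controlling the order of vanishing simultaneously over the entire family of $f$ when $\dim\bar{Y}>1$, is the hard part.
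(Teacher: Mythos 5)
Your reduction to Theorem \ref{thm:factorfibers} and your identification of the finite-dimensional space $V=\Gamma(\bar{Y},\Omega_{\bar{Y}/\spec k}(M_{\bar{Y}}/k^*))$ as the source of uniformity both match the paper. But the step you flag as ``the hard part'' --- converting finite-dimensionality into a bound that is uniform over all $f$ and all $p$ --- is the actual content of the proof, and your sketch of it does not close. Krull's intersection theorem only prevents a \emph{single fixed} nonzero section from vanishing to infinite order at a \emph{single} point; it says nothing about a sequence of distinct morphisms $f_n$ and points $p_n$ realizing multiplicity $n$, which is exactly the failure mode you must exclude. Likewise, ``a constructibility argument on the locus of high-order vanishing'' is not enough as stated: constructible descending chains need not stabilize, and you have not exhibited the ambient noetherian space in which the loci live, nor shown they are closed.

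The paper fills this gap with a concrete incidence construction. Choosing bases $\{\omega_v\}$ of $\Gamma(\bar{X},\Omega_{\bar{X}/\spec k}(M_{\bar{X}}/k^*))$ and $\{\lambda_j\}$ of $V$, it encodes the possible pullback maps $f^*$ as points of $\A^{ru}_k$ (coordinates $x_{vj}$), and forms $L_n:=\A^{ru}_k\times_k \JG^n_{\bar{X}/k}(M_{\bar{X}}/k^*)|_U\times_k \J^n_{V/k}$ with two closed subschemes: $I_n$, cut out by the graph relations $1\otimes d^m\omega_v\otimes 1=\sum_j x_{vj}\otimes 1\otimes d^m\lambda_j$, and $P_n$, cut out by the $d_j s_i$. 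For a triple $(l(f),f(p),p)$ coming from an actual morphism, the fiber of $I_n$ is the graph of $f$ on jet fibers, and Theorem \ref{thm:factorfibers} translates ``$\mult_p f^*\bar{D}\geq n+1$'' into the fiberwise containment $(I_n)_{(l(f),f(p),p)}\subseteq (P_n)_{(l(f),f(p),p)}$. The key point is then that the locus $H_n\subset L_0=\A^{ru}_k\times_k U\times_k V$ where this containment holds is \emph{closed} (because $j_n$ is a closed immersion and the projection $\pi_n$ is flat, hence open), and the $H_n$ form a descending chain of closed subsets of a noetherian scheme, which stabilizes at some $N-1$. That stabilization is the uniform bound. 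So your proposal has the right skeleton but is missing the construction that makes the high-multiplicity locus a closed condition on one fixed noetherian parameter space; without it the uniformity over the family of morphisms is not established.
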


\begin{proof}
For notational reasons define
\[
\JG^n_{\bar{X}/k}(M_{\bar{X}}/k^*)|_U := \JG^n_{\bar{X}/k}(M_{\bar{X}}/k^*)\times_{\bar{X}}U
\]
for each $n\geq 0$ to be the restriction of the global jet space over the open set $U\subset X$, where the fiber products are taken in the category of schemes. Let $V\subset Y$ be the open subset of nonsingular points. Since $Y$ is the complement of the support of the log structure, the restriction of the log structure to $Y$ is trivial. Therefore define $M_V$ to be the trivial log structure as well.

By Proposition \ref{coherentdiff} the sheaves $\Omega_{\bar{X}/\spec k}(M_{\bar{X}}/k^*)$ and $\Omega_{\bar{Y}/\spec k}(M_{\bar{Y}}/k^*)$ are coherent. Since $\bar{X}$ and $\bar{Y}$ are proper,
the vector spaces $\Gamma(\bar{X},\Omega_{\bar{X}/\spec k}(M_{\bar{X}}/k^*))$ and $\Gamma(\bar{Y},\Omega_{\bar{Y}/\spec k}(M_{\bar{Y}}/k^*))$ are finite dimensional by Serre's Finiteness Theorem \cite{serre}. Let $\{\omega_1,\ldots,\omega_r\}$ and $\{\lambda_1,\ldots,\lambda_u\}$ be their bases, respectively. For each $n\geq 0$ define the schemes
\[
L_n := \spec k[x_{vj}]_{v=1,\ldots, r; j=1,\ldots, u}\times_k\JG^n_{\bar{X}/k}(M_{\bar{X}}/k^*)|_U \times_k \J^n_{V/k}(M_V/k^*)
\]
and ideal sheaves
\[
\mathscr{I}_n := \left(1\otimes d^m(\omega_v)\otimes 1-\sum_{j=1}^u x_{vj}\otimes 1\otimes d^m(\lambda_j)\right)_{m=0,\ldots,n-1;v=1,\ldots,r}
\]
Denote by $I_n\subset L_n$ the corresponding closed subscheme. By Definition \ref{def:globaljet} the structure sheaf of $\JG^n_{\bar{X}/k}(M_{\bar{X}}/k^*)|_U$ is locally generated over $\Oc_{\bar{X}}$ by the $d^m(\omega_v)$ for $m=0,\ldots,n-1$ and $v=1,\ldots,r$ giving a closed immersion
\[
j_n: I_n\rightarrow \A_k^{ru} \times_k U\times_k \J^n_{V/k}(M_V/k^*)
\]
for each $n$.

Any morphism $f:\bar{Y}\rightarrow \bar{X}$ of log schemes induces a $k$-linear map on the global log differential forms
\[
f^*:\Gamma(\bar{X},\Omega_{\bar{X}/\spec k}(M_{\bar{X}}/k^*))\rightarrow\Gamma(\bar{Y},\Omega_{\bar{Y}/\spec k}(M_{\bar{Y}}/k^*)).
\]
Let $l(f)$ be the point in $\A^{ru}_k$ where the $x_{vj}$ are determined by the above linear map. Suppose there is some $p\in V$ such that $f(p)\in U$. Then the triple $(l(f),f(p),p)$ can be viewed as a point in $L_0$. In order to establish some sort of bound on intersection multiplicity we need to work over the fibers of the log jet space, but fortunately $f$ induces a morphism on fibers
\[
f: \left(J^n_{V/k}(M_V/k^*)\right)_p\rightarrow \left(\JG^n_{\bar{X}/k}(M_{\bar{X}}/k^*)\right)_{f(p)}
\]
and the graph of this morphism
\[
\left(\JG^n_{\bar{X}/k}(M_{\bar{X}}/k^*)\right)_{f(p)}\times_{\left(\JG^n_{\bar{X}/k}(M_{\bar{X}}/k^*)\right)_{f(p)}} \left(J^n_{V/k}(M_V/k^*)\right)_p\cong \left(I_n\right)_{(l(f),f(p),p)}
\]
is isomorphic to the fiber of $I_n$ over the point $(l(f),f(p),p)\in L_0$ by construction. In particular
\begin{equation} \label{eq:isofibers}
\left(I_n\right)_{(l(f),f(p),p)} \cong \left(J^n_{V/k}(M_V/k^*)\right)_p
\end{equation}
provided $(l(f),f(p),p)$ is a triple coming from a morphism as described above. This isomorphism is compatible with the morphism $j_n$, i.e., $j_n$ induces the above isomorphism. The assumption that the triple come from an actual morphism is necessary as it was used in order to construct the graph.

Define a sheaf of ideals locally by
\[
\mathscr{P}_n = \left(1\otimes d_j s_i\otimes 1\right)_{j=0,\ldots,n}
\]
on the scheme $L_n$ where the $s_i$ come from Definition \ref{def:coords} and let $P_n$ be the associated closed subscheme.

Again let $(l(f),f(p),p)\in L_0$ be a triple coming from a morphism of log schemes as described above and suppose $(I_n)_{(l(f),f(p),p)}\subset (P_n)_{(l(f),f(p),p)}$ where $f(p)\in U_i$. Then $d_js_i$ vanishes at every point in the fiber $(I_n)_{(l(f),f(p),p)}$ for $j=0,\ldots,n$. By the isomorphism coming from the graph of $f$ in Equation \ref{eq:isofibers} we must have $d_jf^*s_i$ vanishing at every point of the fiber $\left(J^n_{V/k}(M_V/k^*)\right)_p$ for $j=0,\ldots,n$. However, $p$ is a smooth point by assumption, so by Theorem \ref{thm:factorfibers}, $\mult_p f^*s_i\geq n+1$.

On the other hand suppose that $(I_n)_{(l(f),f(p),p)}\not\subset (P_n)_{(l(f),f(p),p)}$. Then by the isomorphism in Equation \ref{eq:isofibers} there exists some point of $\left(J^n_{V/k}(M_V/k^*)\right)_p$ such that some $d_j f^*s_i$ does not vanish. Therefore $d_j f^*s_i\notin \textbf{m}_p \HS^n_{\Oc_{Y,p}/k}$ and by Theorem \ref{thm:factorfibers}, $\mult_p f^*s_i<n+1$. Thus
$(I_n)_{(l(f),f(p),p)} \subseteq (P_n)_{(l(f),f(p),p)}$ if and only if
$\mult_p f^*D \ge n+1$.

Finally define a decreasing sequence of closed subsets by
\[
H_n := L_0\setminus\pi_n\left(\A^{ru}_k\times_k U\times_k J^n_{V/k}(M_V/k^*)\setminus j_n(I_n\cap P_n)\right)
\]
where
\[
\pi_n: \A^{ru}_k\times_k U\times_k J^n_{V/k}(M_V/k^*)\rightarrow \A^{ru}_k\times_k U\times_k V
\]
is the projection map. The $H_n$ are decreasing because the conditions to be an element are stronger for successive $n$. To see that they are closed the expression need only be unraveled step by step. Indeed $j_n$ is a closed immersion, its complement is open, $\pi_n$ is flat and in particular open so the complement of its image is closed.

Let $(l(f),f(p),p)\in L_0$ be a point coming from a morphism of log schemes. Then by construction $(l(f),f(p),p)\in H_n$ if and only if $(I_n)_{(l(f),f(p),p)}\subset (P_n)_{(l(f),f(p),p)}$ which by previous observation happens if and only if $\mult_p f^*D\geq n+1$. Since $L_0$ is noetherian the $H_n$ must eventually stabilize for some $N-1$. Therefore $\mult_p f^*D\leq N$ or $\mult_p f^*D = \infty$ in which case $f(\bar{Y})\subset \textrm{Supp}(\bar{D})$.
\end{proof}

\begin{rem}
Theorem \ref{thm:main} does not include any assumption on the characteristic of the field. This might seem surprising since most ABC type estimates require substantial modification in order to work in positive characteristic. However, the hypothesis of Theorem \ref{thm:main} will only be met in positive characteristic if $D=0$ or if $U$ is disjoint from the support of $D$.
\end{rem}

The results can also be easily interpreted in affine $n$ space by the following proposition.

\begin{prop}\label{prop:projspace}
Let $V=\mathbb{P}_k^n$ with coordinates $[x_0:\cdots:x_n]$, and  $f(x_0,\ldots,x_n)$ a homogeneous polynomial of degree $m>0$. Give $V$ the log structure defined by $x_n$ and $f$. Then $\partial_1 f(\frac{x_0}{x_n},\frac{x_1}{x_n},\ldots, 1)$  is the restriction of a global differential log form to the complement of the hyperplane $x_n=0$.
\end{prop}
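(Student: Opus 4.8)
The plan is to work on the standard affine charts $U_i = \{x_i \neq 0\}$ of $\mathbb{P}^n_k$ and to exhibit an explicit global section of $\Omega_{\mathbb{P}^n/k}(M_{\mathbb{P}^n}/k^*)$ whose restriction to the complement of $\{x_n=0\}$ is the asserted form. First I would make the log structure concrete. Since the support of $M_{\mathbb{P}^n}$ is the divisor $\{x_n = 0\}\cup\{f=0\}$, on each chart $U_i$ the monoid $M_{U_i}$ is generated over $\Oc_{U_i}^*$ by the local equations $u_i := x_n/x_i$ and $v_i := f/x_i^m$ of these two components (on $U_n$ the first component is empty, so $u_n$ is a unit). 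Consequently $\partial_1 u_i$ and $\partial_1 v_i$ are honest sections of $\Omega_{\mathbb{P}^n/k}(M_{\mathbb{P}^n}/k^*)$ over $U_i$, since $\partial_1$ carries each monoid element to the degree-one part of the log Hasse--Schmidt ring.

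Next I would set $\omega_i := \partial_1 v_i - m\,\partial_1 u_i \in \Gamma(U_i,\Omega_{\mathbb{P}^n/k}(M_{\mathbb{P}^n}/k^*))$ and check that the $\omega_i$ agree on overlaps. The computation uses only two properties of the universal log derivation: additivity on the monoid, $\partial_1(a+b)=\partial_1 a + \partial_1 b$ (the $k=1$ case of condition iv of Definition \ref{def:logder}, which also forces $\partial_1 0 = 0$), and the identity $\partial_1 w = d_1 w / w$ for a unit $w$ (the relation $d_1\alpha(m)=\alpha(m)\,\partial_1 m$ of Definition \ref{def:hsdef}). On $U_i\cap U_j$ the ratios $v_i/v_j = (x_j/x_i)^m$ and $u_i/u_j = x_j/x_i$ are units, so additivity gives $\partial_1 v_i - \partial_1 v_j = m\,\partial_1(x_j/x_i)$ and $\partial_1 u_i - \partial_1 u_j = \partial_1(x_j/x_i)$; subtracting yields $\omega_i - \omega_j = m\,\partial_1(x_j/x_i) - m\,\partial_1(x_j/x_i) = 0$. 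Hence the $\omega_i$ glue to a single global section $\omega$.

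Finally I would read off the restriction to the complement of $\{x_n=0\}$, which is exactly $U_n$. Taking $i=n$ we have $u_n = x_n/x_n = 1$, so $\partial_1 u_n = \partial_1 0 = 0$, while $v_n = f/x_n^m = f(\tfrac{x_0}{x_n},\tfrac{x_1}{x_n},\ldots,1)$ by homogeneity of $f$. Therefore $\omega|_{U_n} = \partial_1 v_n = \partial_1 f(\tfrac{x_0}{x_n},\tfrac{x_1}{x_n},\ldots,1)$, which is the claimed form.

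I expect the main obstacle to be conceptual rather than computational: because $x_n$ and $f$ are sections of $\Oc(1)$ and $\Oc(m)$ rather than global functions, one must first pin down what the log structure ``defined by $x_n$ and $f$'' means globally and verify that the local descriptions on the $U_i$ patch together correctly. Once the right local generators $u_i,v_i$ are identified, the gluing is forced by the cocycle relations $v_i = (x_j/x_i)^m v_j$ and $u_i = (x_j/x_i)u_j$, and the weights $1$ and $m$ appearing in $\omega_i$ are chosen precisely so as to cancel the contribution of the transition unit $x_j/x_i$. The only point requiring care is that every ``dlog'' manipulation be justified as an instance of the higher log derivation axioms rather than of ordinary calculus.
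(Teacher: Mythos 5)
Your proposal is correct and follows essentially the same route as the paper: the same local forms $\omega_i = \partial_1(f/x_i^m) - m\,\partial_1(x_n/x_i)$, the same gluing computation via additivity of $\partial_1$ on the transition units $x_j/x_i$, and the same evaluation on the chart $x_n\neq 0$. The only difference is cosmetic --- you make explicit the identity $\partial_1(x_n/x_n)=\partial_1 1=0$ and the justification of each dlog manipulation by the axioms, which the paper leaves implicit.
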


\begin{proof}
On each $V_i = \spec k[\frac{x_0}{x_i},\ldots,\frac{x_n}{x_i}]$ the log structure is generated by $\frac{x_n}{x_i}$ and $f(\frac{x_0}{x_i},\ldots, \frac{x_n}{x_i})$. Define
\[
\omega_i = \partial_1 f\left(\frac{x_0}{x_i},\ldots, \frac{x_n}{x_i}\right)-m\partial_1\left(\frac{x_n}{x_i}\right),
\]
which is a well defined log 1-form. On $V_i\cap V_j$ we have
\[
\begin{split}
\omega_i
&= \partial_1\left(f\left(\frac{x_0}{x_j},\frac{x_1}{x_j},\ldots,\frac{x_n}{x_j}\right)\left(\frac{x_j}{x_i}\right)^m\right)-m\partial_1\left(\frac{x_n}{x_j}\frac{x_j}{x_i}\right)\\
&= \partial_1 f\left(\frac{x_0}{x_j},\frac{x_1}{x_j},\ldots, \frac{x_n}{x_j}\right) + m\partial_1\left(\frac{x_j}{x_i}\right) - \left(m\partial_1\left(\frac{x_n}{x_j}\right)+m\partial_1\left(\frac{x_j}{x_i}\right)\right)\\
&= \partial_1 f\left(\frac{x_0}{x_j},\frac{x_1}{x_j},\ldots, \frac{x_n}{x_j}\right)-m\partial_1 \left(\frac{x_n}{x_j}\right)\\
&= \omega_j.
\end{split}
\]
Since $i$ and $j$ were arbitrary and they agree on every intersection, there exists a global log form $\omega$ whose restriction to $V_i$ is $\omega_i$. In particular on $V_n$ we have that
\[
\omega_n = \partial_1 f\left(\frac{x_0}{x_n},\ldots, 1\right)
\]
is the restriction of a global log form.
\end{proof}

This allows for the main theorem to be applied in some instances without having to use log geometry, compactification or higher order log differential forms.

\begin{cor}
Let $\{f_i\}_{1=1,\ldots,m}$ be a finite set of regular functions on $\A^n_\C$ such that $\{df_i\}_{i=1,\ldots,m}$ generate the sheaf of differentials $\Omega^1_{\A^n_\C/\C}$. Let $D$ be any effective Cartier divisor on $\A^n_\C$. Then for any affine curve $C$ there exists some integer $N$ such that for every morphism $j: C\rightarrow \left(\A^n_\C\right)_{f_1\cdots f_m}$ either $\mult_p j^*D \leq N$ for every $p\in C$ or $j(C)\subset D$.
\end{cor}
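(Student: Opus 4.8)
The plan is to realize this statement as a special case of the Main Theorem (Theorem \ref{thm:main}), after building a compactification whose log structure encodes the functions $f_1,\dots,f_m$. Embed $\A^n_\C$ as the complement of the hyperplane $H=\{x_0=0\}$ in $\bar X:=\Ps^n_\C$, let $F_i$ be the homogenization of $f_i$, and give $\bar X$ the coherent log structure generated by $x_0$ and $F_1,\dots,F_m$. Since $x_0$ and the $F_i$ are units precisely away from $H\cup\bigcup_i(F_i)$, the complement of the support of this log structure is exactly $X=(\A^n_\C)_{f_1\cdots f_m}$, which thereby carries the trivial log structure. On the source side I take a smooth affine curve $C$ (as in the motivating examples, so that every point is nonsingular), let $\bar Y:=\bar C$ be its smooth projective model, and put on $\bar C$ the log structure associated to the finite boundary divisor $\bar C\setminus C$, so that $Y=C$. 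Both $\bar X$ and $\bar Y$ are then irreducible, proper, and coherent over the algebraically closed field $\C$. Finally let $\bar D$ be the closure in $\bar X$ of $D=(g)$, where $g\in\C[x_1,\dots,x_n]$ is a global equation for the principal (since $\A^n_\C$ is factorial) divisor $D$.

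The crux is to verify that $\bar D$ has global coordinates in $M_{\bar X}/k^*$ on $U:=X$ in the sense of Definition \ref{def:coords}, and this is where the hypothesis on the $df_i$ enters. By Proposition \ref{prop:projspace} each $\partial_1 f_i$ is the restriction to $\A^n_\C$ of a genuinely global log form $\omega_i\in\Gamma(\bar X,\Omega_{\bar X/\C}(M_{\bar X}/k^*))$. On $X$ each $f_i$ is a unit and $d_1 f_i=f_i\,\partial_1 f_i$, so the restrictions $\omega_i|_X$ generate the same $\Oc_X$-module as the $df_i$; by hypothesis the latter generate $\Omega^1_{\A^n_\C/\C}$, hence the $\omega_i|_X$ generate $\Omega_{X/\C}(M_X/k^*)$ (which is ordinary $\Omega^1_{X/\C}$, the log structure on $X$ being trivial). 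Consequently, on a suitable open cover $\{U_i\}$ of $X$ one may write $d_1 g=\sum_i c_i\,\omega_i$ with $c_i\in\Oc_X(U_i)$, so that $d_1 g\in\Gamma(U_i,\HSG^1_{\bar X/\C}(M_{\bar X}/k^*))$. Because $\character\C=0$, the operator $d$ of Proposition \ref{prop:welldef} carries $\HSG^n_{\bar X/\C}(M_{\bar X}/k^*)$ into $\HSG^{n+1}_{\bar X/\C}(M_{\bar X}/k^*)$ and satisfies $d(d_r g)=(r+1)\,d_{r+1}g$; inductively $d_{r+1}g=\tfrac{1}{r+1}d(d_r g)$ lies in the global jet sheaf, which is precisely the phenomenon recorded in Theorem \ref{thm:gendiff}. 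Thus $d_n g\in\Gamma(U_i,\HSG^n_{\bar X/\C}(M_{\bar X}/k^*))$ for all $n$, and $\bar D$ has global coordinates on $U=X$ with the representatives $s_i=g$.

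It remains to match the morphisms $j\colon C\to X$ with the log morphisms $f\colon\bar Y\to\bar X$ to which Theorem \ref{thm:main} applies. Given $j$, the composite $C\to X\hookrightarrow\Ps^n_\C$ extends uniquely to a morphism of schemes $f\colon\bar C\to\Ps^n_\C$, since $\bar C$ is a smooth proper curve and $\Ps^n_\C$ is projective; as $j(C)\subset X$, the pullbacks $f^\#(x_0)$ and $f^\#(F_i)$ are regular and nonvanishing on $C$, hence sections of the boundary log structure on $\bar C$, and this determines the $f^\flat$ making $f$ a morphism of log schemes. Conversely, any log morphism $f\colon\bar C\to\bar X$ sends $C=Y$ into $X$: at $p\in C$ the source log structure is trivial, so the compatibility $f^\#\circ\alpha_{\bar X}=\alpha_{\bar C}\circ f^\flat$ forces each $f^\#(F_i)$ and $f^\#(x_0)$ to be a unit at $p$, whence $f(p)\in X$ and $f|_C$ is a morphism $C\to X$. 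Applying Theorem \ref{thm:main} produces an integer $N$ such that for every such $f$ either $f(\bar C)\subset\mathrm{Supp}(\bar D)$ or $\mult_p f^*(\bar D)\le N$ for all nonsingular $p\in f^{-1}(U\cap\bar D)\cap Y$.

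Translating back finishes the argument. The alternative $f(\bar C)\subset\mathrm{Supp}(\bar D)$ holds exactly when $j(C)\subset D$. Otherwise, since $U\cap\bar D=X\cap\bar D$ and $j(C)\subset X$, the set $f^{-1}(U\cap\bar D)\cap Y$ equals $j^{-1}(D)$, so that $\mult_p j^*D=\mult_p f^*(\bar D)\le N$ for every $p\in j^{-1}(D)$, while $\mult_p j^*D=0$ for $p\notin j^{-1}(D)$; as $C$ is smooth every point is nonsingular, and hence $\mult_p j^*D\le N$ for all $p\in C$. The main obstacle is the global-coordinates verification of the second paragraph: one must convert the purely differential hypothesis that the $df_i$ generate $\Omega^1_{\A^n_\C/\C}$ into the assertion that the chosen equation $g$ for $D$, together with all of its divided derivatives $d_n g$, lies in the \emph{global} log jet sheaf built from global forms on $\bar X$; it is exactly here that Proposition \ref{prop:projspace}, the vanishing of the characteristic, and Theorem \ref{thm:gendiff} are indispensable.
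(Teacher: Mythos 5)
Your argument is correct and follows the paper's intended route exactly: the paper's entire proof is the single line ``Immediate from Theorem \ref{thm:main}, Proposition \ref{prop:projspace} and Proposition \ref{thm:gendiff},'' and your write-up is precisely the expansion of that citation (compactify to $\Ps^n_\C$ with the log structure generated by $x_0$ and the homogenized $f_i$, invoke Proposition \ref{prop:projspace} to globalize the $\partial_1 f_i$, use characteristic $0$ together with Theorem \ref{thm:gendiff} to put every $d_n g$ in the global log jet sheaf, and extend $j$ to a log morphism of the compactifications). The one caveat is your added smoothness assumption on $C$, needed because Theorem \ref{thm:main} only controls nonsingular points; that is a wrinkle in the corollary's statement rather than a defect of your proof.
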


\begin{proof}
Immediate from Theorem \ref{thm:main}, Proposition \ref{prop:projspace} and Proposition \ref{thm:gendiff}.
\end{proof}

\bibliographystyle{plain}
\bibliography{thesis}

\end{document}